\newtheorem{theorem}{Theorem}[section]
\newtheorem{lemma}{Lemma}[section]
\newtheorem{remark}{Remark}[section]
\newtheorem{example}{Example}[section]
\renewcommand{\section}{
         \setcounter{equation}{0}
         \@startsection {section}{1}{\z@}{-3.5ex plus -1ex minus
         -.2ex}{2.3ex plus .2ex}{\normalsize\bf}
}
\renewcommand{\subsection}{
         \@startsection {subsection}{1}{\z@}{-3.5ex plus -1ex minus
         -.2ex}{2.3ex plus .2ex}{\normalsize\bf}
}
\def\reals{{\rm\vrule depth0ex width.4pt\kern-.08em R}}
\def\bbbz{{\mathchoice {\hbox{$\sf\textstyle Z\kern-0.4em Z$}}
{\hbox{$\sf\textstyle Z\kern-0.4em Z$}}
{\hbox{$\sf\scriptstyle Z\kern-0.3em Z$}}
{\hbox{$\sf\scriptscriptstyle Z\kern-0.2em Z$}}}}
\newcommand{\nc}{\newcommand}
\nc{\W}{{\bf W}}
\nc{\A}{{\bf A}}
\nc{\bL}{{\bf L}}
\nc{\bH}{{\bf H}}
\nc{\C}{{\cal C}}
\def\eq#1{(\ref{e:#1})}
\def\elabel#1{\label{e:#1}}
\begin{document}
\begin{center}
\Large\bf Optimal Rate Scheduling via Utility-Maximization for
$J$-User MIMO Markov Fading Wireless Channels with
Cooperation\footnote{The author gratefully acknowledges the support
from National Natural Science Foundation of China under grant No.
10971249.}
\end{center}
\begin{center}
\large\bf Wanyang Dai
\end{center}
\begin{center}
\small Department of Mathematics and State Key Laboratory of Novel
Software Technology\\
Nanjing University, Nanjing 210093, China\\
Email: nan5lu8@netra.nju.edu.cn\\
Originally submitted on June 17, 2010\\
Revised version submitted on
December 24, 2010
\end{center}

\vskip 0.1 in
\begin{abstract}

We design a dynamic rate scheduling policy of Markov type via the
solution (a social optimal Nash equilibrium point) to a
utility-maximization problem over a randomly evolving capacity set
for a class of generalized processor-sharing queues living in a
random environment, whose job arrivals to each queue follow a doubly
stochastic renewal process (DSRP). Both the random environment and
the random arrival rate of each DSRP are driven by a finite state
continuous time Markov chain (FS-CTMC). Whereas the scheduling
policy optimizes in a {\em greedy} fashion with respect to each
queue and environmental state and since the closed-form solution for
the performance of such a queueing system under the policy is
difficult to obtain, we establish a reflecting diffusion with
regime-switching (RDRS) model for its measures of performance and
justify its asymptotic optimality through deriving the stochastic
fluid and diffusion limits for the corresponding system under heavy
traffic and identifying a cost function related to the utility
function, which is minimized through minimizing the workload process
in the diffusion limit. More importantly, our queueing model
includes both $J$-user multi-input multi-output (MIMO) multiple
access channel (MAC) and broadcast channel (BC) with cooperation and
admission control as special cases. In these wireless systems, data
from the $J$ users in the MAC or data to the $J$ users in the BC is
transmitted over a common channel that is fading according to the
FS-CTMC. The $J$-user capacity region for the MAC or the BC is a
set-valued stochastic process that switches with the FS-CTMC fading.
In any particular channel state, we show that each of the $J$-user
capacity regions is a convex set bounded by a number of linear or
smooth curved facets. The random arrival rate to each user for these
systems is designed to switch with the FS-CTMC fading via admission
control. At the transmit end, packets to each user are queued and
served under the policy. Therefore our queueing model can perfectly
match the dynamics of these wireless systems.\\

\noindent{\bf Key words:} Processor-Sharing Queues, Random
Environment, Multi-Input Multi-Output, Multiple Access Channel,
Broadcast Channel, Shannon Capacity Region, Markov Fading, Doubly
Stochastic Renewal Process, Utility-Maximization Scheduling, Nash
Equilibrium, Concave Game, Heavy Traffic, Asymptotic Optimality,
Fluid Limit, Diffusion Limit, Reflecting Diffusion with
Regime-Switching
\end{abstract}

\section{Introduction}

In the current cellular systems, each base station is considered as
a separate entity with no cooperation among base stations,
infrastructure cooperation among base stations has been proposed in
the literature such as \cite{acabha:bescas,kumvis:joipow,
viskum:ratsch}, which is to consider the base stations as one end of
a MIMO system that has received a great deal of attention as a
method to achieve high data rates over wireless links. Thus, in this
paper, we study a $J$-user MIMO MAC uplink system and a $J$-user
MIMO BC downlink system. Both of them can be seen as a cellular
system with multiple users and multiple cooperating base station
antennas: either multiple cooperating base stations each with a
single antenna or a single-cell cellular system with a multi-antenna
base station or a combination thereof. In the MAC or the BC, data is
buffered at the transmit end and the channel is time-varying due to
multipath fading, which is a typical feature of wireless channel and
brings additional complexity for system design and performance
analysis. We suppose that the fading process is a FS-CTMC whose
discrete time version is widely used in modeling wireless channels
(see, e.g.,
\cite{wanmoa:finsta,sto:maxsch,viskum:ratsch,daiwan:optcon}, and
references therein). Therefore, the $J$-user capacity regions of the
MAC and the BC are both time-varying set-valued stochastic processes
driven by the FS-CTMC and in each state of the Markov chain, it is
well known that one can obtain the improved capacity by cooperation,
e.g., the sum of the rates at which data can be served for the $J$
users is greater than the single-user capacity for any user (see,
e.g., \cite{bhawil:peruse}). Moreover, due to the impact of the
random environmental fading factor and the cooperated design, the
service rates of the corresponding queueing system for the $J$ users
in the MAC or in the BC are also random processes driven by the
FS-CTMC.

So, motivated by the above observations, we consider a type of
generalized processor-sharing queues living in a random environment,
whose job arrivals to each queue follow a DSRP. Both the random
environment and the random arrival rate of each DSRP are driven by a
FS-CTMC. Presently, for such a queueing system, it is not known how
to choose a reasonable online rate scheduling policy to minimize the
average delay for a given load and exact solutions for average delay
are not available even for many simple policies, which implies that
any meaningful comparison has to be done by simulations. Therefore,
to make the gap between the dynamic rate scheduling and the
performance optimization for the system be filled to some extent, we
design a dynamic rate scheduling policy of Markov type via the
solution (a social optimal Nash equilibrium point) to an
optimization problem that maximizes a general utility function over
each of the randomly evolving capacity regions through the
Karush-Kuhn-Tucker (KKT) optimality conditions (see, e.g.,
\cite{lue:linnon}). Moreover, to overcome the intractability of
performance evaluation for the system under the designed policy, we
develop stochastic fluid and diffusion models through suitable
scaling of time and space and justifying related limit theorems for
a heavily loaded queueing system operating under this policy. The
limit models for queue lengths (or workloads) are respectively a
random process driven by the FS-CTMC and a RDRS (i.e., a reflecting
stochastic differential equation (SDE) with regime-switching). In
addition, we identify a cost function related to the utility
function, which is minimized through minimizing the workload process
in the diffusion limit and hence provides a useful means in
illustrating our policy to be asymptotically optimal.

Finally, in order to incorporate the $J$-user MIMO MAC and MIMO BC
into our general queueing framework, we justify that the $J$-user
capacity region for the MAC or the BC in any particular channel
state is a convex set formed by a number of linear or smooth curved
facets through applying the method of convex optimization, the
implicit function theorem, and the duality of capacity regions
between the MAC and the BC. Moreover, to realize the DSRP in the MAC
or in the BC, we adopt a cross-layer design methodology to switch
the arrival rates with the FS-CTMC channel fading process according
to the current channel state information (CSI) through admission
control.

\vskip 0.25cm \noindent{\bf Literature Review}

The randomly evolving capacity region used in designing our
utility-maximization rate scheduling policy is a generalization of
the so-called MIMO channel capacity region in the Shannon theoretic
sense. For a single-user time-invariant channel, the Shannon
capacity is defined as the maximum mutual information between input
and output, which is shown by Shannon's capacity theorem to be the
maximum data rate that can be transmitted over the channel with
arbitrarily small error probability. For a $J$-user time-invariant
MIMO channel, the corresponding capacity region is a $J$-dimensional
set of all rate vectors $(c_{1},...,c_{J})'$ simultaneously
achievable by all $J$ users. In particular, the region for the
Gaussian MAC is a convex set that is the union of rate regions
corresponding to every product input distribution satisfying the
user-by-user power constraints (see, e.g., \cite{covtho:eleinf},
\cite{chever:gaumul}, \cite{yurhe:itewat}, \cite{goljaf:caplim}).
The Gaussian BC differs from the Gaussian MAC in two fundamental
aspects (see, e.g., \cite{jinvis:duagau}). In the MAC, each
transmitter has an individual power constraint, whereas in the BC
there is only a single power constraint on the transmitter.
Moreover, signal and interference come from different transmitters
in the MAC and are multiplied by different channels gains (known as
the near-far effect) before being received, whereas in the BC, the
entire received signal comes from the same source and therefore has
the same channel gain. Nevertheless, the capacity region for the
Gaussian BC can be obtained through the duality between the Gaussian
MAC and the Gaussian BC (see, e.g., \cite{jinvis:duagau} and
\cite{goljaf:caplim}), i.e., it is the convex hull of the union over
the set of capacity regions of the dual Gaussian MACs such that the
total MAC power is the same as the power in the BC. Moreover, the
authors in \cite{liuhou:weipro} provide an analytical and numerical
characterization in terms of the shape of the capacity boundaries
for both the MAC and the BC.

However, in both the Gaussian MAC and the Gaussian BC, the exact
characterization concerning piecewise smoothness of the capacity
boundaries is not available until now, which motivates us to give
more accurate analysis about the capacity region in order to apply
our utility maximization rate scheduling algorithm to these wireless
systems. In addition, when the $J$-user MIMO channels are stochastic
and time-varying fading ones, the capacity regions have multiple
definitions (see, e.g., \cite{goljaf:caplim}). Nevertheless, to
capture the exact capacity region at each time instant for the MAC
or the BC, we consider the capacity regions as a set-valued
stochastic process evolving with the FS-CTMC rather than think of it
as a fixed one in an average sense such as an ergodic capacity
region (see, e.g., \cite{goljaf:caplim}).

Concerning the scheduling algorithms, the authors in
\cite{acabha:bescas,bhawil:peruse,bhawil:difapp} considered a
quasi-static downlink channel, where the channel is assumed to be
fixed for all transmissions over the period of interest. In this
case, the FS-CTMC and the random packet arrival rates assumed in the
current paper reduce to constants, and moreover, without considering
utility and cost optimization, the authors in
\cite{acabha:bescas,bhawil:peruse,bhawil:difapp} designed a simple
rate scheduling policy of Markov type, which was shown to be
throughput-optimal for a fixed convex capacity region in
\cite{acabha:bescas} and a limit theorem was proved to justify the
diffusion approximation of the queue length process for a heavily
loaded system operating under their policy with two users in
\cite{bhawil:peruse} and with multiple users in
\cite{bhawil:difapp}. Their approximating model is a RBM living in
the two-dimensional positive quadrant or in the general-dimensional
positive orthant.

In the studies of \cite{sto:maxsch,shasri:patopt,daiwan:optcon},
some scheduling policies were considered for certain heavily loaded
wireless systems with finite state discrete time Markov fading
process. In particular, a MaxWeight scheduling policy was considered
in \cite{sto:maxsch} for a generalized switch and it was shown that
the workload process converges to a one-dimensional RBM and
MaxWeight policy asymptotically minimizes the workload under certain
conditions. Moreover, an exponential scheduling rule was designed
for wireless channels in \cite{shasri:patopt} and for a generalized
switch in \cite{daiwan:optcon}, which was proved to be
throughput-optimal and under which, the similar results concerning
the workload process were obtained and justified as in
\cite{sto:maxsch}. In addition, \cite{yeyao:heatra} designed a
utility-maximizing resource allocation policy for a class of
stochastic networks with concurrent occupancy of resources and
established its asymptotic optimality for the associated heavily
loaded queueing system. Their policy covers the generalized
$c\mu$-rule in~\cite{mansto:schfle} and the MaxWeight policy
in~\cite{sto:maxsch} as special cases.

The differences between the current study and those in
\cite{sto:maxsch,shasri:patopt,daiwan:optcon,yeyao:heatra} are in
three aspects as follows.

First, their scheduling policies in
\cite{sto:maxsch,shasri:patopt,daiwan:optcon,yeyao:heatra} depend
only on a fixed capacity region that is a convex polyhedral and ours
depends on a time-varying and stochastic evolving capacity region
process (a random environment) that, at each time instant, is a more
general convex region rather than a convex polyhedral.

Second, the rates of packet arrivals to the $J$ users are random
processes rather than a constant as used in
\cite{sto:maxsch,shasri:patopt,daiwan:optcon,yeyao:heatra}. Hence
our input traffic to each user is a DSRP whose particular case is
the well-known doubly stochastic Poisson process (see, e.g.,
\cite{bre:poipro}) that is widely used to model voice, video and
data source traffics in telecommunication systems and is called
Markovian modulated Poisson process (MMPP) or ON/OFF source (see,
e.g., \cite{jairou:pactra}, \cite{nikaky:ovesou},
\cite{taqwil:profun}, \cite{dai:contru}) and \cite{dai:heatra})).

Third, our discussion is based on a continuous time horizon rather
than a discrete one as in
\cite{sto:maxsch,shasri:patopt,daiwan:optcon}. Therefore our
vector-valued random service rate process depends on the FS-CTMC
whose holding time at each environmental state has an important
impact on the limiting processes, e.g., the limiting fluid model is
a random process driven by the FS-CTMC rather than a deterministic
function of time and the limiting diffusion model is a more general
RDRS rather than a RBM as derived in
\cite{sto:maxsch,shasri:patopt,daiwan:optcon}. If one wants to
directly generalize the studies in
\cite{sto:maxsch,shasri:patopt,daiwan:optcon} to the corresponding
ones in a discrete time random environment, a geometric distribution
may be imposed on the holding time at each environmental state.

Finally, without considering optimal dynamic scheduling with
utility/cost and performance optimizations as the goals. CTMCs have
been used to model the random environments in the studies of some
queueing systems under certain static service disciplines, see,
e.g., \cite{choman:fludif} and references therein for more details.

The rest of the paper is organized as follows. In
Section~\ref{aqsy}, we introduce our generalized processor-sharing
queues under random environment and design our optimal rate
scheduling policy. In Section~\ref{apmodel}, we introduce our heavy
traffic condition and present our main asymptotic optimality
theorem. In Section~\ref{wnm}, we illustrate the usages of our
optimal policy and our main results in the $J$-user MIMO uplink and
downlink wireless channels and present the associated results
concerning the piecewise smoothness of capacity boundaries of the
$J$-user MIMO MAC and MIMO BC. In
Sections~\ref{proofmains}-\ref{prooflemmas}, we prove our main
theorem and associated lemmas.

\section{Optimizing Processor-Sharing Queues under Random Environment}
\label{aqsy}

\subsection{Primitive Data}

The queueing system under consideration is a type of generalized
processor-sharing queues that live in a random environment evolving
according to a stationary FS-CTMC
$\alpha=\{\alpha(t),t\in[0,\infty)\}$, which takes value in a finite
state space ${\cal K}\equiv\{1,...,K\}$ with generator matrix
$G=(g_{il})$ ($i,l\in{\cal K}$) and
\begin{eqnarray}
&&g_{il}=\left\{\begin{array}{ll}
-\gamma(i)&\mbox{if}\;\;i=l,\\
\gamma(i)q_{il}&\mbox{if}\;\;i\neq l
\end{array}
\right. \elabel{generatorm}
\end{eqnarray}
where $\gamma(i)$ is the holding rate for the chain in an
environmental state $i\in\{1,...,K\}$ and $Q=(q_{il})$ is the
transition matrix of its embedded discrete time Markov chain (see,
e.g., \cite{res:advsto}). Moreover, the queueing system has $J$
queues in parallel, which correspond to $J$ users for a given
positive integer $J$. Each queue that is of infinite buffer capacity
buffers packets (jobs) arrived for a given user. The queues can be
served simultaneously by a single server with rate allocation vector
$c(t)=(c_{1}(t),...,c_{J}(t))'$ that takes values in a time-varying
and randomly evolving capacity set ${\cal R}(\alpha(t))$.

Concretely, for each state $i\in{\cal K}$, ${\cal R}(i)$ is a convex
set that contains the origin and has $L\;(>J)$ boundary pieces of
which $J$ are $(J-1)$-dimensional linear facets along the coordinate
axes while the remaining ones are in the interior of $R^{J}_{+}$ and
form the so-called {\it capacity surface} denoted by ${\cal O}(i)$,
which consists of $B=L-J\;(>0)$ linear or smooth curved facets
$h_{k}(c,i)$ on $R_{+}^{J}$ for $k\in{\cal U}\equiv\{1,2,...,B\}$,
i.e.,
\begin{eqnarray}
&&{\cal R}(i)\equiv\left\{c\in R_{+}^{J}:\;h_{k}(c,i)\leq
0,\;k\in{\cal U}\right\}. \elabel{capsur}
\end{eqnarray}
Moreover, if we let $C_{U}$ denote the sum capacity upper bound for
the capacity region, then the facet in the center of the capacity
surface is linear and can be expressed by
\begin{eqnarray}
&&h_{k_{U}}(c,i)=\sum_{j=1}^{J}c_{j}-C_{U}\elabel{hmiddle}
\end{eqnarray}
where $k_{U}\in{\cal U}$ is the index corresponding to $C_{U}$.
Moreover, we suppose that any one of the $J$ linear facets along the
coordinate axes forms a $(J-1)$-user capacity region corresponding
to a particular group of $J-1$ users who are the only users in the
systems. Similarly, we can define the $(J-j)$-user capacity region
for each $j\in\{2,...,J-1\}$. Examples of such capacity sets in two-
and three-dimensional spaces for a particular state $i\in{\cal K}$
are shown in Figures~\ref{twouserregion} and~\ref{threeuserregion}.
\begin{figure}[tbh]
\centerline{\epsfxsize=3.0in\epsfbox{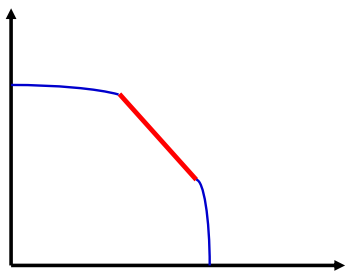}} \caption{\small
A 2-user capacity set in the 2-dimensional space in a particular
environmental state} \label{twouserregion}
\end{figure}
\begin{figure}[tbh]
\centerline{\epsfxsize=3.0in\epsfbox{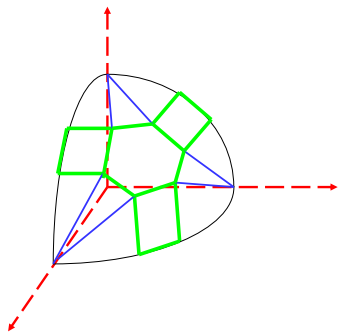}}
\caption{\small A 3-user capacity set in the 3-dimensional space in
a particular environmental state} \label{threeuserregion}
\end{figure}

In addition, we suppose that the system starts empty and that there
is a $J$-dimensional packet arrival process
$A=\{A(t)=(A_{1}(t),...,A_{J}(t))',t\geq 0\}$, where $A_{j}(t)$ with
$j\in{\cal J}$ and $t\geq 0$ is the number of packets arrived to the
$j$th queue during $(0,t]$ and the prime denotes the transpose of a
vector or a matrix. For each $j\in{\cal J}$, $A_{j}(\cdot)$ is
assumed to be a DSRP with random arrival rate process
$\lambda_{j}(\alpha(\cdot))$ and squared coefficient of variation
process $\alpha^{2}_{j}(\alpha(\cdot))\in(0,\infty)$. The packet
interarrival times are assumed to be i.i.d. during the time interval
corresponding to a specific environmental state $i\in{\cal K}$.
Moreover, let $\{u_{j}(k),k=1,2,...\}$ denote the sequence of times
between the arrivals of the $(k-1)$th and the $k$th packets to the
$j$th queue and let $\{v_{j}(k),k=1,2,...\}$ denote the sequence of
packet lengths (in bits) for the successive arrivals to queue $j$,
which is assumed to be a sequence of strictly positive i.i.d. random
variables with average packet length $1/\mu_{j}\in(0,\infty)$ and
squared coefficient of variation $\beta_{j}^{2}\in(0,\infty)$. In
addition, we suppose that all interarrival and service time
processes are mutually (conditionally) independent when the
environmental state is fixed. For each $j\in{\cal J}$ and each
nonnegative constant $h$ (in bits), we use $S_{j}(\cdot)$ to denote
the renewal counting process associated with
$\{v_{j}(k),k=1,2,...\}$, i.e.,
\begin{eqnarray}
&&S_{j}(h)=\sup\left\{n\geq 0:\sum_{k=1}^{n}v_{j}(k)\leq h\right\}.
\elabel{sjvc}
\end{eqnarray}
The reasonability about the DSRP assumption on the packet arrivals
and about the i.i.d. assumption on the packet sizes in a
communication system is due to the large-scale computer experiments
and statistical analysis conducted by Bell Labs scientists
\cite{caocle:inttra}, and recent findings by \cite{dai:contru}) and
\cite{dai:heatra}).

\subsection{A Utility-Maximization Scheduling Algorithm and Queueing
Dynamics}

First of all, we remark that the service discipline used in this
paper is the so-called head of line discipline under which the
service goes to the packet at the head of the line for a serving
queue where packets are stored in the order of their arrivals. The
service rates are determined by a function of the environmental
state and the number of packets in each of the queues. At each state
$i\in{\cal K}$ and for a given queue length vector
$q=(q_{1},...,q_{J})'$, let $\Lambda(q,i)$ denote the corresponding
rate vector (in bps) of serving the $J$ queues, which is a solution
of the following utility maximization problem
\begin{eqnarray}
&&\max_{c\in{\cal R}(i)}\sum_{j\in{\cal J}}U_{j}(q_{j},c_{j})
\elabel{srates}
\end{eqnarray}
where $c=(c_{1},...,c_{J})'$ is a $J$-dimensional vector and
$U_{j}(q_{j},c_{j})$ for each $j\in{\cal J}$ is a utility function
defined on $R_{+}^{J}$, which is second-order differentiable and
satisfies the following conditions
\begin{eqnarray}
&&U_{j}(0,c_{j})=0,\elabel{uconI}\\
&&U_{j}(q_{j},c_{j})=\Phi_{j}(q_{j})\Psi(c_{j})\;\;\mbox{is strictly
increasing and concave in}
\;\;c_{j}\;\;\mbox{for each}\;\;q_{j}>0,\elabel{uconII}\\
&&\frac{\partial U_{j}(q_{j},c_{j})}{\partial c_{j}}\;\;\mbox{is
strictly increasing in}\;\;q_{j}\geq 0,\;\; \elabel{uconIII}\\
&&\frac{\partial U_{j}(0,c_{j})}{\partial c_{j}}=0\;\;\mbox{and}\;\;
\lim_{q_{j}\rightarrow\infty}\frac{\partial
U_{j}(q_{j},c_{j})}{\partial c_{j}}=+\infty\;\;\mbox{for
each}\;\;c_{j}>0. \elabel{uconIV}
\end{eqnarray}
Due to condition \eq{uconII}, we know that there must exist an
optimal solution in the following form for a given $q$,
\begin{eqnarray}
&&\Lambda(q,i)=\left\{\begin{array}{ll} \Lambda^{{\cal
Q}(k_{1},...,k_{m})}(q,i)'&\mbox{if}\;\;q\in{\cal
Q}(k_{1},...,k_{m})\;\;
\mbox{and a given}\;\;m\in{\cal J},\\
(0,0,...,0)&\mbox{if}\;\;q=0,
\end{array}\right.
\elabel{zerozero}
\end{eqnarray}
where $k_{j}\in{\cal J}$ for each $j\in\{1,...,m\}$ and $k_{j}\neq
k_{l}$ if $j\neq l$. Moreover, ${\cal Q}(k_{1},...,k_{m})$ denotes
the set of all $q\in R_{+}^{J}$ that have exactly $m$ components
$q_{k_{j}}$ ($j\in\{1,...,m\}$) to be zero, and the components of
$\Lambda^{{\cal Q}(k_{1},...,k_{m})}(q,i)'$ corresponding to $k_{j}$
($j\in{\cal J}\setminus\{1,...,m\}$) consist of the optimal solution
to \eq{srates} with the capacity region ${\cal R}(i)$ replaced by
the corresponding $(J-m)$-user capacity region and all other
components of $\Lambda^{{\cal Q}(k_{1},...,k_{m})}(q,i)'$ are zero.
For example, when there are only two users in the system,
\eq{zerozero} is of the following form,
\begin{eqnarray}
&&\Lambda(q,i)=\left\{\begin{array}{ll}
(c_{1}(q,i),c_{2}(q,i))'&\mbox{if}\;\;q>0,\\
(c_{1}^{*}(i),0)&\mbox{if}\;\;q_{1}>0,\;\;q_{2}=0,\\
(0,c_{2}^{*}(i))&\mbox{if}\;\;q_{1}=0,\;\;q_{2}>0,\\
(0,0)&\mbox{if}\;\;q_{1}=q_{2}=0.
\end{array}\right.
\nonumber
\end{eqnarray}
\begin{remark}\label{resetzero}
The optimal solution to \eq{srates} may not be unique when $q_{j}=0$
for some $j\in{\cal J}$, however, if $\Lambda_{j}(q,i)>0$ with
$q_{j}=0$ for some $j\in{\cal J}$, we can reset $\Lambda_{j}(q,i)$
to zero without violating the constraints or decreasing the
objective value (referred to \eq{uconII}). Hence, whenever the
solution to \eq{srates} is concerned, we will always suppose that
$\eq{zerozero}$ is true. Moreover, for each $q>0$ (and similarly,
for a lower dimensional case), it follows from \eq{uconII} that
every point on the capacity surface defined in \eq{capsur} is a Nash
equilibrium point to a concave game in the sense of
\cite{ros:exiuni} and therefore the solution to \eq{srates} is a
social optimal Nash equilibrium point to the concave game.
\end{remark}

In addition, we assume that $\{U_{j}(q_{j},c_{j}),j\in{\cal J}\}$
satisfies the so-called radial homogeneity condition, i.e., for any
scalar $a>0$, each $q>0$ and each $i\in{\cal K}$, its maximizer
satisfies
\begin{eqnarray}
&&\Lambda_{j}(aq,i)=\Lambda_{j}(q,i). \elabel{homcon}
\end{eqnarray}
Interested readers are referred to \cite{yeyao:heatra} for numerous
examples of the utility function that satisfies conditions
\eq{uconI}-\eq{uconIV} and \eq{homcon}, such as, the so-called
proportional fair allocation, minimal delay allocation, and
$(\beta,\alpha)$-proportionally fair allocation, which are widely
used in communication protocols.

\subsection{The Dual Cost Minimization Problem}

In this subsection, we consider the following cost minimization
problem for each $i\in{\cal K}$, a given $c\in{\cal R}(i)$ and a
given parameter $w\geq 0$,
\begin{eqnarray}
&&\min_{q}V(q,c)
\elabel{costminp}\\
&&\mbox{s.t.}\;\;\sum_{j=1}^{J}\frac{q_{j}}{\mu_{j}}\geq w,
\nonumber\\
&&\;\;\;\;\;\;\;q_{j}\geq 0\;\;\mbox{for each}\;\;j\in{\cal J}
\nonumber
\end{eqnarray}
where the function $V$ is defined by
\begin{eqnarray}
V(q,c)=\sum_{j=1}^{J}C_{j}(q_{j},c_{j})\elabel{vcost}
\end{eqnarray}
and $C_{j}$ is the cost function associated with the utility
function $U_{j}$ in \eq{srates}, i.e.,
\begin{eqnarray}
&&C_{j}(q_{j},c_{j})=\frac{1}{\mu_{j}}\int_{0}^{q_{j}}\frac{\partial
U_{j}(u,c_{j})}{\partial c_{j}}du.\elabel{costf}
\end{eqnarray}
In other words, when the environment is in state $i\in{\cal K}$, we
try to identify a queue state $q$ corresponding to a given
$c\in{\cal R}(i)$ and a given parameter $w\geq 0$ such that the
total cost over the system is minimized and the (average) workload
meets or exceeds $w$.

\subsection{Performance Measure Processes}

Let $Q_{j}(t)$ denote the queue length for the $j$th queue with
$j\in{\cal J}$ at each time $t\in[0,\infty)$, i.e.,
\begin{eqnarray}
&&Q_{j}(t)=Q_{j}(0)+A_{j}(t)-D_{j}(t)\elabel{queuelength}
\end{eqnarray}
where $D_{j}(t)$ is the number of packet departures from the $j$th
queue in $(0,t]$, i.e., $D_{j}(t)=S_{j}(T_{j}(t))$, where
\begin{eqnarray}
&&T_{j}(t)=\int_{0}^{t}\Lambda_{j}(Q(s),\alpha(s))ds
\elabel{tjqalpha}
\end{eqnarray}
which denotes the cumulative amount of service (measured in bits)
given to the $j$th queue up to time $t$. Moreover, let $W(t)$ denote
the (expected) workload at time $t$ and $Y(t)$ denote the unused
capacity up to time $t$, i.e.,
\begin{eqnarray}
&&W(t)=\sum_{j=1}^{J}\frac{Q_{j}(t)}{\mu_{j}},\;\;
Y(t)=\sum_{j=1}^{J}\left(\int_{0}^{t}\rho_{j}(\alpha(s))ds
-T_{j}(t)\right) \elabel{wyte}
\end{eqnarray}
where, for each $i\in{\cal K}$,
$\rho(i)=(\rho_{1}(i),...,\rho_{J}(i))'$ is a given point on the
capacity surface ${\cal O}(i)$ and it is chosen to satisfy
\begin{eqnarray}
&&\sum_{j=1}^{J}\rho_{j}(i)=\max_{c\in{\cal
R}(i)}\left(\sum_{j=1}^{J}c_{j}\right)={\cal
C}_{U}\;\;\mbox{and}\;\;\rho_{1}(i)=...=\rho_{J}(i). \elabel{rhoji}
\end{eqnarray}
Here we remark that the second condition in \eq{rhoji} and the
separable condition in \eq{uconIII} are required in proving
Lemmas~\ref{fluidlemma}-\ref{uniattrack}. However, when only a
constant environment (e.g., a pseudo channel in a wireless system)
is concerned, these two conditions can be removed. Obviously,
\begin{eqnarray}
&&Y(t)\;\;\mbox{is non-decreasing in}\;\;t\geq 0
\elabel{ynondecrease}
\end{eqnarray}
since, for each $t\geq 0$, we have
\begin{eqnarray}
&&\sum_{j=1}^{J}\Lambda_{j}(Q(t),\alpha(t))\leq\sum_{j=1}^{J}
\rho_{j}(\alpha(t)). \elabel{bmaxcap}
\end{eqnarray}

\section{Main Theorem: Asymptotic Optimality}\label{apmodel}

In this section, we present the optimality result for our scheduling
policy by considering the operation of the queueing system in the
asymptotic regime where it is heavily loaded. Concretely, we define
three sequences of diffusion-scaled processes $\hat{Q}^{r}(\cdot)$,
$\hat{W}^{r}(\cdot)$ and $\hat{Y}^{r}(\cdot)$ by
\begin{eqnarray}
&&\hat{Q}_{j}^{r}(t)\equiv\frac{Q_{j}^{r}(r^{2}t)}{r},\;\;\;
\hat{W}^{r}(t)\equiv\frac{W^{r}(r^{2}t)}{r},\;\;\;\hat{Y}^{r}(t)
\equiv\frac{Y^{r}(r^{2}t)}{r} \elabel{rsqueue}
\end{eqnarray}
for each $t\geq 0$ and $j\in{\cal J}$, which associate with a
sequence of independent Markov processes
$\{\alpha^{r}(\cdot),r\in\{1,2,...\}\}$. These systems indexed by
$r$ all have the same  basic structure as described in the last
section except the arrival rates $\lambda^{r}_{j}(i)$ and the
holding time rates $\gamma^{r}(i)$ for all $i\in{\cal K}$, which may
vary with $r\in\{1,2,...\}$ and satisfy the following heavy traffic
condition
\begin{eqnarray}
&& r\left(\lambda_{j}^{r}(i)-\lambda_{j}(i)\right)
\rightarrow\theta_{j}(i)\;\;\mbox{as}\;\;r\rightarrow \infty,\;\;
\gamma^{r}(i)=\frac{\gamma(i)}{r^{2}} \elabel{heavytrafficc}
\end{eqnarray}
for each $j\in{\cal J}$, where $\theta_{j}(i)\in R$ are some
constants and $\lambda_{j}(i)\equiv\mu_{j}\rho_{j}(i)$ are the
nominal average packet arrival rates when the channel is in state
$i\in{\cal K}$.

Note that, due to the heavy traffic condition in \eq{heavytrafficc}
for the $r$th environmental state process $\alpha^{r}(\cdot)$ with
$r\in\{1,2,...\}$, we know that $\alpha^{r}(r^{2}\cdot)$ and
$\alpha(\cdot)$ equal each other in distribution since they own the
same generator matrix (see, e.g., the definition in pages 384-388 of
\cite{res:advsto}). Hence, in the sense of distribution, all of the
systems indexed by $r\in\{1,2,...\}$ in \eq{rsqueue} share the same
random environment over any time interval $[0,t]$.

Moreover, let $B^{E}(\cdot)$ and $B^{S}(\cdot)$ denote the two
independent $J$-dimensional standard Brownian motions, and for each
$i\in{\cal K}$, let
\begin{eqnarray}
\lambda(i)&=&(\lambda_{1}(i),..., \lambda_{J}(i))',
\elabel{lambdae}\\
\rho(i)&=&\left(\rho_{1}(i),...,\rho_{J}(i)\right)',\elabel{barvc}\\
\theta(i)&=&(\theta_{1}(i),...,\theta_{J}(i))', \elabel{vectorpar}\\
\Gamma^{E}(i)&=&\left(\Gamma^{E}_{kl}(i)\right)_{J\times
J}\equiv\mbox{diag}\left(\lambda_{1}(i)\alpha_{1}^{2}(i),...,
\lambda_{J}(i)\alpha^{2}_{J}(i)\right),\elabel{covmatrixo}\\
\Gamma^{S}(i)&=&\left(\Gamma^{S}_{kl}(i)\right)_{J\times
J}\equiv\mbox{diag}\left(\lambda_{1}(i)\beta_{1}^{2},...,
\lambda_{J}(i)\beta_{J}^{2}\right), \elabel{covmatrix}\\
H^{e}(t)&=&\left(H^{e}_{1}(t)',...,H^{e}_{J}(t)\right)'\;\;\mbox{with}
\;\;e\;\;\mbox{denotes}\;\;E\;\;\mbox{or}\;\;S,
\elabel{bbvector}\\
H^{e}_{j}(t)&=&\int_{0}^{t}\left(\Gamma^{e}_{jj}(\alpha(s))
\right)^{\frac{1}{2}}dB^{e}_{j}(s). \elabel{bbvectorI}
\end{eqnarray}

In addition, let $\hat{Q}^{r,G}(\cdot)$ and $\hat{W}^{r,G}(\cdot)$
denote the diffusion-scaled queue-length and workload processes
under an arbitrarily feasible rate scheduling policy $G$, e.g., a
simple Markovian policy as studied in~\cite{bhawil:difapp} or a
policy $\Lambda^{G}(Q^{r}(t),\alpha(t))$ that may not be the optimal
solution to the utility maximization problem \eq{srates}. Then we
have the following theorem.
\begin{theorem}\label{rsdifth}
Suppose $Q^{r}(0)=0$ for all $r\in\{1,2,...\}$ and the heavy traffic
condition \eq{heavytrafficc} holds, then under the scheduling policy
\eq{zerozero}, we have the claims as stated in the following two
parts:\\ {\bf Part A:} Along $r\in\{1,2,...\}$, the following
convergence in distribution is true,
\begin{eqnarray}
&&(\hat{Q}^{r}(\cdot),\hat{W}^{r}(\cdot),\hat{Y}^{r}(\cdot))
\Rightarrow(\hat{Q}(\cdot),\hat{W}(\cdot),\hat{Y}(\cdot))
\elabel{qwyweakc}
\end{eqnarray}
and the limits $\hat{Q}(\cdot),\hat{W}(\cdot)$ and $\hat{Y}(\cdot)$
are continuous a.s., which satisfy the following RDRS
\begin{eqnarray}
&&\hat{W}(t)=\hat{X}(t) +\hat{Y}(s)\geq 0 \elabel{mainmodel}
\end{eqnarray}
where
\begin{eqnarray}
&&d\hat{X}(t)=\sum_{j=1}^{J}\frac{1}{\mu_{j}}
\left(\theta_{j}(\alpha(t))dt +dH_{j}^{E}(t) +dH^{S}_{j}(t)\right)
\elabel{hatx}
\end{eqnarray}
Moreover, $(\hat{W}(\cdot),\hat{Y}(\cdot))$ is the unique solution
of \eq{mainmodel} with the following complementary property:
\begin{enumerate}
\item $\hat{Y}(0)=0$,
\item $\hat{Y}(\cdot)$ is non-decreasing,
\item $\hat{Y}(\cdot)$ can increase only at a
time $t\in[0,\infty)$ that $\hat{W}(t)=0$.
\end{enumerate}
In addition, we have
\begin{eqnarray}
&&\hat{Q}(t)=q^{*}(\hat{W}(t),\rho(\alpha(t)))\elabel{qqast}
\end{eqnarray}
with $q^{*}(w,\rho(i))$ being the solution to the cost minimization
problem \eq{costminp} in terms of each given $w$ and $i\in{\cal
K}$.\\
{\bf Part B:} The workload $\hat{W}(\cdot)$ and the cost
$\sum_{j=1}^{J}C_{j}(\hat{Q}_{j}(\cdot),\rho(\alpha(\cdot)))$ are
minimal with probability one in the sense that, for all $t\geq 0$,
\begin{eqnarray}
&&\liminf_{r\rightarrow\infty}\hat{W}^{r,G}(t)\geq\hat{W}(t),
\elabel{optimaleqn}\\
&&\liminf_{r\rightarrow\infty}\sum_{j=1}^{J}
C_{j}(\hat{Q}_{j}^{r,G}(t),\rho_{j}(\alpha(t)))
\geq\sum_{j=1}^{J}C_{j}(\hat{Q}_{j}(t),\rho_{j}(\alpha(t))).
\elabel{optimaleqnI}
\end{eqnarray}
\end{theorem}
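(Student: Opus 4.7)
The plan is to follow the standard three-stage heavy-traffic program: (i) a fluid-scale analysis that identifies a state-space-collapse attractor, (ii) a continuous-mapping plus one-dimensional Skorohod-reflection argument at the diffusion scale to identify the RDRS limit, and (iii) a pathwise workload-dominance argument to deduce Part B from the extremality of the reflection map.

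For Part A, I would first pass the primitives through a conditional functional CLT: given the environmental path $\alpha(\cdot)$, the renewal counts $S_{j}(\cdot)$ together with the doubly stochastic arrival processes $A_{j}^{r}$ satisfy a standard renewal FCLT. Combined with the heavy-traffic centering $r(\lambda_{j}^{r}(i)-\lambda_{j}(i))\to\theta_{j}(i)$ and the identity in distribution between $\alpha^{r}(r^{2}\cdot)$ and $\alpha(\cdot)$ forced by $\gamma^{r}(i)=\gamma(i)/r^{2}$, this yields the free process $\hat{X}$ of \eq{hatx} as the weak limit of the diffusion-scaled net arrival minus potential service. The random time change $T_{j}$ of \eq{tjqalpha} is then controlled via the fluid limit together with a continuous-mapping step, using the sum-rate bound \eq{bmaxcap} and the monotonicity \eq{ynondecrease}.

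The critical intermediate step is state space collapse. Invoking the fluid-limit lemma and the unique-attractor lemma referred to in the remark following \eq{rhoji}, for each frozen regime $i\in{\cal K}$ the fluid queue trajectories under the policy \eq{zerozero} are pulled onto the minimizer manifold $q^{*}(w,\rho(i))$ of the dual cost problem \eq{costminp}. By the radial-homogeneity property \eq{homcon} this attraction is scale-invariant and therefore lifts to the diffusion scale: for each $t$, $\hat{Q}^{r}(t)-q^{*}(\hat{W}^{r}(t),\rho(\alpha(t)))\to 0$ in probability, which is exactly \eq{qqast}. Combining this collapse with \eq{bmaxcap}, \eq{ynondecrease}, and the complementarity of $\hat{W}^{r}$ and $\hat{Y}^{r}$ at the origin, the one-dimensional Skorohod reflection map applied to $\hat{X}^{r}$ yields weak convergence to the unique pair $(\hat{W},\hat{Y})$ satisfying \eq{mainmodel} together with the three listed properties.

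For Part B, I would use a pathwise workload-dominance argument. Under any feasible policy $G$ one has $\sum_{j=1}^{J}c_{j}^{G}(t)\leq{\cal C}_{U}$, so $W^{r,G}(t)=X^{r}(t)+Y^{r,G}(t)$ with $Y^{r,G}$ nondecreasing but possibly increasing away from zero; by the extremality of the Skorohod reflection at zero one gets $\hat{W}^{r,G}\geq\hat{W}^{r}$ pathwise, and passing to the liminf yields \eq{optimaleqn}. For the cost bound \eq{optimaleqnI}, since $q^{*}(w,\rho(i))$ by construction minimizes $V(\cdot,\rho(i))$ subject to $\sum_{j}q_{j}/\mu_{j}\geq w$, and since the optimal value is monotone non-decreasing in $w$, combining this monotonicity with $\hat{W}^{r,G}\geq\hat{W}$ produces the claimed inequality on the cost. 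The main obstacle I anticipate is the state space collapse in the presence of the regime-switching attractor: $q^{*}(\cdot,\rho(i))$ jumps whenever $\alpha$ jumps, so one must show that after each jump the queue-length vector realigns with the new attractor on a fluid-time scale that is $o(r^{2})$ in the original time, making the realignment invisible at the diffusion scale where the holding intervals of $\alpha$ are $O(1)$. This is precisely where the equal-components condition in \eq{rhoji} and the separable, product form \eq{uconII}--\eq{uconIII} play a central role, yielding a Lyapunov function that decreases componentwise and uniformly across regimes.
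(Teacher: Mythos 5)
Your overall architecture coincides with the paper's: a conditional functional CLT for the doubly stochastic primitives (the paper's Lemma~\ref{primconlm}), fluid limits and attraction to the fixed point $q^{*}(w,\rho(i))$ via a Lyapunov function built from the dual cost (Lemmas~\ref{fluidlemma} and~\ref{uniattrack}), a Bramson-type finer time-scale argument to transfer the attraction to the diffusion scale (Lemma~\ref{uniattract}), and the Skorohod map for \eq{mainmodel}. One caveat on Part A: radial homogeneity by itself does not make the fluid attraction carry over to the diffusion scale; what does the work is the decomposition of $[\tau,\tau+\delta]$ into $O(r)$ windows of length $T/r$ and an induction over windows, with separate treatment of the windows straddling jumps of $\alpha(\cdot)$. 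Since your closing paragraph names exactly this realignment issue and the role of \eq{rhoji} and the separable utility, I read your phrasing there as a presentational shortcut rather than a missing idea.

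The genuine gap is in Part B. The inequality $\hat{W}^{r,G}(t)\geq\hat{W}^{r}(t)$ does not hold pathwise at finite $r$, because the two systems do not share a free process: under $G$ the cumulative allocations $T^{r,G}_{j}$ differ from $T^{r}_{j}$, so the time-changed service counts $S_{j}(T^{r,G}_{j}(\cdot))$, and hence the analogue of $\hat{X}^{r}$, differ between the two policies. The minimality of the Skorohod reflection only compares two decompositions $W=X+Y$ of the \emph{same} $X$ with $Y$ nondecreasing, $Y(0)=0$ and $W\geq 0$. The paper therefore works at the level of limits along a subsequence attaining the liminf: it extracts a fluid limit $\bar{T}^{G}$ for the $G$-system, introduces the first time $\zeta$ at which $\bar{T}^{G}$ departs from the nominal allocation $\bar{c}$ and a second time $\zeta_{1}$ at which the limit of $\hat{Y}^{r,G}$ becomes infinite; before $\zeta\wedge\zeta_{1}$ the scaled free processes converge to the same $\hat{X}$ and minimality applies, while for $t$ beyond $\zeta$ the fluid workload $\bar{W}^{G}(t)$ is strictly positive so $\hat{W}^{r,G}(t)=r\bar{W}^{r,G}(t)\rightarrow+\infty$, and beyond $\zeta_{1}$ the divergence of $\hat{Y}^{r,G}$ forces the same conclusion. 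Your proposal needs this case analysis; without it the dominance claim has no justification. Your derivation of \eq{optimaleqnI} from \eq{optimaleqn}, the monotonicity in $w$ of the optimal value of \eq{costminp}, and \eq{qqast} is correct and matches the paper.
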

\begin{remark}
Comparing with the RBM widely studied in queueing literature, the
RDRS model derived in \eq{mainmodel} exhibits its new feature in the
sense that it corresponds to a more realistic FS-CTMC fading process
in certain applications such as in a wireless system and indicates
that the random process $\alpha(\cdot)$ is a non-ignorable random
environmental factor to the system performance even in the limiting
approximation model. From the model, we can also see that, when a
constant environment (e.g., a quasi-static channel in a wireless
system) is concerned, the model in \eq{mainmodel} reduces to a RBM
since the state process $\alpha(\cdot)$ keeps a constant. Moreover,
by the discussions in \cite{cheyao:funque}, \cite{dai:broapp},
\cite{daidai:heatra}, and \cite{harrei:refbro}, we know that the
unique solution $(\hat{W}(t),\hat{Y}(t))$ to \eq{mainmodel} can be
represented by $(\hat{W},\hat{Y})=(\Phi(\hat{X}),\Psi(\hat{X}))$,
where $\Phi(\cdot)$ and $\Psi(\cdot)$ are Lipschitz continuous
mappings. In addition, a RDRS is different from a conventional SDE
since its drift and diffusion coefficients are not adapted to the
filtration generated by the driving Brownian motions. This type of
SDEs without boundary reflections has received a great attention in
the area of financial engineering (see, e.g., \cite{zhoyin:marmea}).
\end{remark}

\section{Applications to $J$-user MIMO Uplink and
Downlink Wireless Channels} \label{wnm}

In this section, we apply the discussions in the previous sections
to a cellular system where base stations cooperate among noise-free
infinite capacity links. We do not make any distinction between a
single-cell cellular system having multiple base-station antennas
and the traditional cellular system with cooperating single-antenna
base stations. Here, the cooperation means that the base stations
can perform joint beamforming and/or power control but there is a
constraint on the total power that the base stations can share.
Therefore, our wireless system can be considered consisting of a
base station having $M$ antennas and $J$ users (mobiles), each of
which has $N$ antennas. Thus the uplink channel can be modeled as a
$J$-user MIMO MAC and the downlink channel can be modeled as a
$J$-user MIMO BC (see, e.g., Figure~\ref{tthuserregion}).
\begin{figure}[tbh]
\centerline{\epsfxsize=3.0in\epsfbox{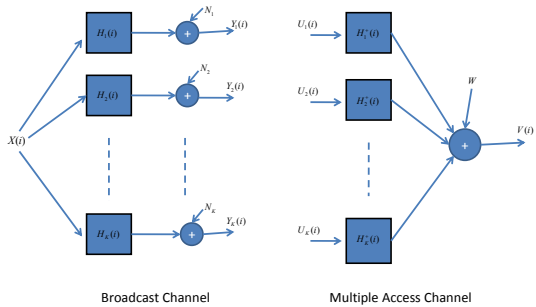}} \caption{\small The
BC and MAC channels in a particular environmental state}
\label{tthuserregion}
\end{figure}
The channel fading is supposed to obey the stationary FS-CTMC
$\alpha=\{\alpha(t),t\in[0,\infty)\}$ that is described in the
previous sections. Moreover, we suppose that the receive or transmit
end (the cooperating base stations) has perfect CSI. For each
channel state $i\in{\cal K}$, we let $H_{j}(i)$ ($j\in{\cal
J}\equiv\{1,...,J\}$) denote the downlink channel matrix from the
base station to user $j$. Assuming the same channel is used on the
uplink and downlink, then the uplink matrix of user $j$ is
$H^{\dagger}_{j}(i)$ that is the conjugate transpose of $H_{j}(i)$.

Moreover, at the transmit end, arriving packets for each user are
buffered before transmission and the rate of arrivals is a random
process that switches with the FS-CTMC channel fading through
admission control. Therefore, the processor-sharing queues presented
in the previous section can be used to model the channel dynamics
for both $J$-user MIMO MAC and $J$-user MIMO BC. The remaining issue
is about how to characterize the MAC and BC capacity region
processes, which is also a central topics in information theory
literature.

\subsection{The MIMO MAC Capacity Region}

In the MAC and for each channel state $i\in{\cal K}$, let
$U_{j}(i)\in{\cal C}^{N\times 1}$ be the transmitted signal of user
$j$, where ${\cal C}^{N\times 1}$ denotes the $N\times 1$ complex
matrix, and let $V(i)\in{\cal C}^{M\times 1}$ denote the received
signal, $W\in{\cal C}^{M\times 1}$ denote the noise vector where
$W\sim\bar{N}(0,I)$ is circularly symmetric complex Gaussian with
identity covariance (note that the notation $W$ here has the
different meaning from the workload process $W(t)$ defined in
\eq{wyte}). Then the received signal at the base station is equal to
\begin{eqnarray}
&&V(i)=H^{\dagger}(i)U'(i)+W \elabel{vhuw}
\end{eqnarray}
where $H^{\dagger}(i)=[H_{1}^{\dagger}(i),...,H_{J}^{\dagger}(i)]$
and $U(i)=[U'_{1}(i),...U'_{J}(i)]$ (see, e.g.,
Figure~\ref{tthuserregion}). Moreover, each user $j$ is subject to
an individual power constraint $P_{j}$. The transmit covariance
matrix of user $j$ is defined to be $\Gamma_{j}(i)\equiv
E[U_{j}(i)U_{j}^{\dagger}(i)]$. The power constraint implies that
Tr$(\Gamma_{j}(i))\leq P_{j}$ for $j\in{\cal J}$. During the period
of each channel state $i\in{\cal K}$, it follows from
\cite{goljaf:caplim} and \cite{yurhe:itewat} that the MAC capacity
region is a $J$-dimensional closed convex set in
$R_{+}^{J}\equiv\{c\in R^{J}:c_{j}\geq 0,j\in{\cal J}\}$, i.e.,
\begin{eqnarray}
&&{\cal R}(i)={\cal
C}_{MAC}(P_{1},...,P_{J},H^{\dagger}(i))=\elabel{macform}\\
&&\bigcup_{\{\Gamma_{j}(i)\geq 0,\mbox{Tr}(\Gamma_{j}(i))\leq
P_{j},j\in{\cal J}\}}\left\{c\in R^{J}_{+}:\sum_{j\in
S}c_{j}\leq\frac{1}{2}\mbox{log}\left|I+\sum_{j\in
S}H_{j}^{\dagger}(i)\Gamma_{j}(i)H_{j}(i)\right|,\forall \;\;
S\subset{\cal J}\right\} \nonumber
\end{eqnarray}
where $S$ is a subset of ${\cal J}$ and $|\cdot|$ denotes the
determinant of a matrix. Moreover, every point in ${\cal R}(i)$ can
be achieved by Shannon's source coding theorem and successive
decoding (see, e.g., \cite{gamcov:muluse} and \cite{goljaf:caplim}).
However, in designing a utility maximization based rate scheduling
policy, we need to know more detailed boundary characterization of
the MAC capacity region since it frequently relies on the KKT
optimality conditions (see, e.g., \cite{lue:linnon} and
\cite{liuhou:weipro}). Thus we have the following lemma.
\begin{lemma}\label{smoothsurfaces}
For the $J$-user MIMO MAC and each channel state $i\in{\cal K}$,
${\cal R}(i)$ contains the origin and has $L$ linear or smooth
curved facets with $L$ given by
\begin{eqnarray}
&&L=J!+\sum_{j=2}^{J}C_{J}^{j}(J-j+1)!+J.\elabel{numberL}
\end{eqnarray}
Moreover, $J$ of these pieces are $(J-1)$-dimensional linear facets
along the coordinate axes while the remaining $B=L-J$ ones are in
the interior of $R^{J}_{+}$ and form ${\cal O}(i)$, which are linear
or smooth curved facets $h_{k}(c,i)$ on $R_{+}^{J}$ for $k\in{\cal
U}\equiv\{1,2,...,B\}$, i.e.,
\begin{eqnarray}
&&{\cal R}(i)\equiv\left\{c\in R_{+}^{J}:\;h_{k}(c,i)\leq
0,\;k\in{\cal U}\right\}. \elabel{capsuro}
\end{eqnarray}
Moreover, if $C_{MAC}(P,H(i))$ is used to denote the sum capacity
upper bound for the MAC capacity region, then
\begin{eqnarray}
&&h_{k_{MAC}}(c,i)=\sum_{j=1}^{J}c_{j}-C_{MAC}(P,H(i))\elabel{gmiddle}
\end{eqnarray}
where $k_{MAC}\in{\cal U}$ is the index corresponding to
$C_{MAC}(P,H(i))$.
\end{lemma}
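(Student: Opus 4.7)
The plan is to establish the geometric structure of ${\cal R}(i)$ in three main steps: (a) verify convexity and identify the boundary decomposition into the axis facets and the capacity surface ${\cal O}(i)$; (b) parametrize ${\cal O}(i)$ via the weighted sum-rate maximization problem and apply the implicit function theorem to obtain smoothness of each curved facet; (c) enumerate the facets combinatorially to derive the count $L$ in \eq{numberL}.

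First I would note that, for any admissible covariance profile $\{\Gamma_{j}(i),\;j\in{\cal J}\}$, the inner set in \eq{macform} is cut out in $R_{+}^{J}$ by the $2^{J}-1$ subset-sum inequalities together with non-negativity, hence is a convex polytope containing the origin. Since the union of these polytopes over the convex constraint set $\{\Gamma_{j}(i)\geq 0,\;\mbox{Tr}(\Gamma_{j}(i))\leq P_{j}\}$ is convex (cf.~\cite{yurhe:itewat}, \cite{goljaf:caplim}), ${\cal R}(i)$ is a closed convex set of full dimension $J$ containing the origin. Its boundary decomposes into the $J$ coordinate-axis facets $F_{j}={\cal R}(i)\cap\{c_{j}=0\}$ and the remaining interior boundary in the open positive orthant, which is exactly ${\cal O}(i)$. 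Setting $c_{j}=0$ is equivalent to taking $\Gamma_{j}(i)=0$ and removing the $j$th user, so $F_{j}$ is the $(J-1)$-user MAC capacity region for the remaining users; it is therefore $(J-1)$-dimensional and lies on the affine hyperplane $\{c_{j}=0\}$, yielding the $J$ linear axis facets asserted by the lemma.

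Next I would parametrize ${\cal O}(i)$ through the weighted sum-rate program $\max_{c\in{\cal R}(i)}\mu'c$ for $\mu>0$. By the polymatroid/successive-decoding theory underlying \eq{macform}, for each fixed covariance profile and each $\mu$ with sorted order $\mu_{\pi(1)}\geq\cdots\geq\mu_{\pi(J)}$, the inner maximum is attained at the polytope vertex corresponding to decoding order $\pi$, yielding a closed-form expression for $c^{*}(\Gamma,\pi)$ as differences of log-determinants. The outer maximization over $\{\Gamma_{j}(i)\}$ is a concave program (log-det is concave), whose KKT conditions together with standard constraint qualification produce a unique smooth optimum $\Gamma^{*}(\mu)$ as $\mu$ varies in the interior of each permutation cone. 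Substituting $\Gamma^{*}(\mu)$ back into $c^{*}$ and eliminating the direction parameter via the implicit function theorem yields a smooth local defining function $h_{k}(c,i)=0$ for each resulting facet of ${\cal O}(i)$; in the fully symmetric case $\mu=(1,\ldots,1)'$ the dependence on $\Gamma^{*}$ drops out and the facet degenerates to the linear sum-capacity facet \eq{gmiddle}.

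Finally, to count the facets I would stratify the unit directions $\mu\in R_{+}^{J}$ according to the permutation(s) $\pi$ compatible with the sorted order of $\mu$ and the partial-sum constraint(s) active at the inner optimum. Directions with strict inequalities $\mu_{\pi(1)}>\cdots>\mu_{\pi(J)}$ lie in the interior of a single permutation cone and each contributes one $(J-1)$-dimensional smooth curved facet, giving $J!$ corner facets. Directions with exactly $j$ coordinates of $\mu$ tied $(2\leq j\leq J)$ collapse a $j$-user block into a single decoding super-user while the accompanying ties in the KKT system activate a common partial-sum constraint; a combinatorial count then shows that each choice of a size-$j$ subset of ${\cal J}$ (there are $C_{J}^{j}$ of them) combined with one of the $(J-j+1)!$ orderings of the remaining singletons together with the super-block produces a single distinct facet, linear along the $(j-1)$ tied directions and curved along the remaining $(J-j)$ directions (the $j=J$ case giving the unique fully linear sum-capacity facet \eq{gmiddle}). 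Adding the $J!$ corner facets, the $\sum_{j=2}^{J}C_{J}^{j}(J-j+1)!$ mixed-type facets and the $J$ axis facets yields the total $L$ in \eq{numberL}. The main obstacle will be this combinatorial bookkeeping: one must verify that distinct combinatorial types produce genuinely distinct facets of ${\cal R}(i)$ meeting only along strictly lower-dimensional faces, and that the implicit function step is uniformly applicable across each stratum despite the non-smoothness of the sorting map $\mu\mapsto\pi(\mu)$ along cone boundaries.
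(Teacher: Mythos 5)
Your proposal follows essentially the same route as the paper's proof: characterize the boundary via the weighted sum-rate maximization over priority vectors, stratify by the tie pattern (and vanishing) of the sorted priorities to obtain the count $J!+\sum_{j=2}^{J}C_{J}^{j}(J-j+1)!+J$, and establish smoothness of each curved facet by applying the KKT conditions and the implicit function theorem to the concave log-det program in the covariance matrices. The "main obstacle" you flag is precisely where the paper spends its effort — it handles non-unique optima by restricting to the complementary flat directions, and verifies continuity of the optimal path and its derivatives as the optimizer crosses between the interior and the boundary pieces of the feasible region in covariance space — so your plan is sound but would need that case analysis to be complete.
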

\begin{example}\label{twomace}
For the MAC channel and each $i\in{\cal K}$, when $J=2$ and $N=1$
(i.e., each of the user's mobiles has only single transmit antenna),
it follows from~\cite{goljaf:caplim} that
\begin{eqnarray}
&&g_{1}(c,i)=c_{1}-\log\left|I+H_{1}^{\dagger}(i)P_{1}H_{1}(i)\right|,
\nonumber\\
&&g_{2}(c,i)=c_{1}+c_{2}-\log\left|I+H_{1}^{\dagger}(i)P_{1}H_{1}(i)
+H_{2}^{\dagger}(i)P_{2}H_{2}(i)\right|,\nonumber\\
&&g_{3}(c,i)=c_{2}-\log\left|I+(I+H_{1}^{\dagger}(i)P_{1}H_{1}(i))^{-1}
H_{2}^{\dagger}(i)P_{2}H_{2}(i)\right|.\nonumber
\end{eqnarray}
\end{example}

\subsection{The MIMO BC Capacity Region}

In the MIMO BC and for each channel state $i\in{\cal K}$, let
$X(i)\in{\cal C}^{M\times 1}$ denote the transmitted vector signal
from the base station and let $Y_{j}(i)\in{\cal C}^{N\times 1}$ be
the received signal at the user $j$. The noise at user $j$ is
represented by $N_{j}\in{\cal C}^{N\times 1}$ and is assumed to be
circularly symmetric complex Gaussian noise $(N_{j}\sim N(0,I))$.
The received signal of user $j$ (see, e.g.,
Figure~\ref{tthuserregion}) is equal to
\begin{eqnarray}
&&Y_{j}(i)=H_{j}(i)X(i)+N_{j}.\nonumber
\end{eqnarray}
The transmit covariance matrix of the input signal is
$\Gamma_{X}(i)\equiv E\left[X(i)X^{\dagger}(i)\right]$. The base
station is subject to an average power constraint, which implies
that Tr($\Gamma_{X}(i))\leq P$. During each channel state $i\in{\cal
K}$, the $J$-user MIMO BC capacity region denoted by ${\cal R}(i)$
can be calculated by the duality of the MAC and the BC in
\cite{jinvis:duagau} and \cite{goljaf:caplim}, where the BC capacity
region is obtained by taking the convex hull of the union over the
set of capacity regions of the dual MIMO MACs such that the total
MAC power is the same as the power in the BC, i.e.,
\begin{eqnarray}
&&{\cal R}(i)={\cal C}_{BC}(P,H(i))
=\bigcup_{\{(P_{1},...,P_{J}):\sum_{j=1}^{J}P_{j}=P\}} {\cal
C}_{MAC}(P_{1},...,P_{J},H^{\dagger}(i)). \elabel{bccapcityI}
\end{eqnarray}
Moreover, the Dirty Paper Coding (DPC) proposed in \cite{cos:wridir}
achieves the capacity for the MIMO BC (see, e.g.,
\cite{weiste:capreg}). In particular, if each user has only single
receive antenna, we have the following lemma.
\begin{lemma}\label{smoothsurfacesI}
For the $J$-user MIMO BC with $N=1$, each $i\in{\cal K}$ and $L$
given in \eq{numberL}, ${\cal R}(i)$ contains the origin and has $L$
boundary pieces of which $J$ are ($(J-1)$-dimensional linear facets
along the coordinate axes while the remaining $B=L-J$ ones are in
the interior of $R^{J}_{+}$ and form ${\cal O}(i)$, which are linear
or smooth curved facets $h_{k}(c,i)$ on $R_{+}^{J}$ for $k\in{\cal
U}\equiv\{1,2,...,B\}$, i.e.,
\begin{eqnarray}
&&{\cal R}(i)\equiv\left\{c\in R_{+}^{J}:\;h_{k}(c,i)\leq
0,\;k\in{\cal U}\right\}. \elabel{capsurBC}
\end{eqnarray}
Moreover, if $C_{Sato}(P,H(i))$ denotes the sum capacity upper bound
(called the Sato upper bound) for the BC capacity region, then
\begin{eqnarray}
&&h_{k_{Sato}}(c,i)=\sum_{j=1}^{J}c_{j}-C_{Sato}(P,H(i))\elabel{hmiddle}
\end{eqnarray}
where $k_{Sato}\in{\cal U}$ is the index corresponding to
$C_{Sato}(P,H(i))$.
\end{lemma}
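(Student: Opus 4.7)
The plan is to reduce the BC characterization to the MAC characterization already established in Lemma~\ref{t:smoothsurfaces} by means of the duality identity \eq{bccapcityI}, and then to handle the union/envelope over power splits via Lagrangian/KKT analysis and the implicit function theorem. Concretely, first I would write
\begin{eqnarray*}
{\cal R}(i)=\bigcup_{{\cal P}(P)} {\cal C}_{MAC}(P_{1},\ldots,P_{J},H^{\dagger}(i)),
\quad {\cal P}(P)\equiv\Bigl\{(P_{1},\ldots,P_{J}):P_{j}\ge 0,\ \sum_{j=1}^{J}P_{j}=P\Bigr\},
\end{eqnarray*}
and observe that each set in the union, by Lemma~\ref{t:smoothsurfaces}, is a convex body with $L$ boundary pieces whose interior ($B=L-J$) pieces are determined by sum-rate inequalities indexed by the same combinatorial data (subsets/orderings) that appear in the MAC polymatroid description. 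The boundary of $\mathcal{R}(i)$ is then the upper envelope of these inequalities over ${\cal P}(P)$.

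Second, I would dispose of the $J$ coordinate-axis facets immediately: setting any coordinate $c_{j}=0$ and using duality recursively identifies the corresponding face with a lower-dimensional BC capacity region, so those $J$ pieces are $(J-1)$-dimensional linear facets, exactly as claimed. For the remaining $B=L-J$ interior pieces, I would fix the combinatorial label $k$ of a MAC facet and define
\begin{eqnarray*}
h_{k}(c,i)\;\equiv\;\sum_{j\in S_{k}}\pi_{k,j}\,c_{j}\;-\;\max_{(P_{1},\ldots,P_{J})\in{\cal P}(P)} f_{k}(P_{1},\ldots,P_{J};i),
\end{eqnarray*}
where $f_{k}$ is the right-hand side of the relevant MAC sum-rate constraint of Lemma~\ref{t:smoothsurfaces}, which, because $N=1$ makes each $H_{j}^{\dagger}(i)H_{j}(i)$ rank-one, takes the tractable form $\tfrac{1}{2}\log|I+\sum_{j\in S_{k}}H_{j}^{\dagger}(i)P_{j}H_{j}(i)|$. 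The envelope $h_{k}(c,i)\le 0$ then cuts out one BC facet per MAC facet, so the combinatorial count $L$ of equation \eq{numberL} is inherited, and the sum-capacity facet specializes to \eq{hmiddle} with $C_{Sato}(P,H(i))$ because the Sato upper bound coincides with the maximum $\sum_{j}c_{j}$ over $\mathcal{R}(i)$ (achievability by DPC, cf.\ \cite{cos:wridir}, \cite{weiste:capreg}).

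Third, I would verify piecewise smoothness facet-by-facet by writing the KKT conditions for the inner maximization in the display above. Because $\log|\cdot|$ is strictly concave in each $P_{j}$ on the interior of its domain and the constraint set ${\cal P}(P)$ is an affine simplex, the inner maximizer $(P_{1}^{\ast}(c,i),\ldots,P_{J}^{\ast}(c,i))$ is unique on the relative interior of each facet and satisfies a nondegenerate water-filling system. Applying the implicit function theorem (as in the MAC case used in Lemma~\ref{t:smoothsurfaces}) to this system shows that $P_{j}^{\ast}$ depends smoothly on $c$, whence $h_{k}(c,i)$ is smooth (or linear when the optimum is at a vertex of ${\cal P}(P)$, e.g.\ the single-user axis facets). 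Finally, convexity of $\mathcal{R}(i)$, which follows from convexity of each dual MAC region together with time-sharing, ensures that the $B$ smooth pieces and $J$ linear axis pieces together form the complete boundary and enclose the origin.

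The main obstacle I anticipate is controlling the envelope step: a priori, the upper envelope of a family of convex boundary pieces indexed by ${\cal P}(P)$ could in principle subdivide or merge facets, destroying the clean one-to-one correspondence with the MAC facet count $L$. The resolution is twofold. On one hand, the duality identity \eq{bccapcityI} guarantees that each boundary point of $\mathcal{R}(i)$ is attained by \emph{some} single dual MAC region, so no new facet types are created. On the other hand, the rank-one structure forced by $N=1$ prevents degenerate coincidences among different $h_{k}$'s on open sets, so no facet types collapse. Together these give exactly $L$ pieces, matching \eq{numberL} and completing the characterization \eq{capsurBC}.
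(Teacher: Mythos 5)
Your overall route is the same as the paper's: the paper's proof of this lemma is essentially two sentences, invoking the duality identity \eq{bccapcityI} to write ${\cal R}(i)$ as a union of dual MAC regions over power splits and then appealing to ``the similarity of structures'' with \eq{macform} so that the KKT/implicit-function-theorem argument of Lemma~\ref{t:smoothsurfaces} carries over. You supply much of the detail the paper omits, and your treatment of the coordinate-axis facets, the Sato facet, and the smoothness-via-IFT step is in the right spirit.

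There is, however, one concrete misstep in your construction of the interior facets. You define
$h_{k}(c,i)=\sum_{j\in S_{k}}\pi_{k,j}c_{j}-\max_{(P_{1},\ldots,P_{J})\in{\cal P}(P)}f_{k}(P_{1},\ldots,P_{J};i)$,
maximizing the right-hand side of \emph{each} MAC constraint separately over power splits. Membership in the union $\bigcup_{{\cal P}(P)}{\cal C}_{MAC}(P_{1},\ldots,P_{J},H^{\dagger}(i))$ requires a \emph{single} power split satisfying all the constraints simultaneously, so the intersection $\{c: h_{k}(c,i)\le 0,\ \forall k\}$ with your $h_{k}$'s is in general a strict superset of ${\cal R}(i)$ (already for $J=2$: your region would allow $c_{1}$ up to its value at $P_{1}=P$ and $c_{2}$ up to its value at $P_{2}=P$ at the same time). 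The correct repair --- and what the paper's appeal to ``similar discussion'' actually amounts to --- is to characterize the boundary through supporting hyperplanes: maximize $\sum_{j}\nu_{j}c_{j}$ over the union, which becomes the weighted sum-rate maximization \eq{bounopt}--\eq{fgamma} with the $J$ individual constraints $\mbox{Tr}(\Gamma_{j}(i))\le P_{j}$ replaced by the single sum-power constraint $\sum_{j}\mbox{Tr}(\Gamma_{j}(i))\le P$, the power split being absorbed into the optimization variables. This changes the feasible set but not the combinatorial partition of the priority simplex into the sets $S(k_{0},k_{1},\ldots,k_{m})$, so the count $L$ in \eq{numberL} is inherited and the KKT plus implicit-function-theorem argument of Lemma~\ref{t:smoothsurfaces} applies verbatim to the enlarged system; your ``no new facets are created, no facets collapse'' paragraph then becomes a consequence of that partition rather than a separate claim needing the rank-one structure.
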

\begin{remark}
The concept of {\em Sato upper bound} of the $J$-user MIMO BC can be
found in such as \cite{goljaf:caplim}, \cite{caisha:achrat},
\cite{vistse:sumcap}, \cite{visjin:capmul}, \cite{yucio:sumcap}.
Moreover, here we conjecture that, in the MIMO BC channel, if each
user has multiple receive antennas, the corresponding property
stated in the lemma should also be true.
\end{remark}
\begin{example}\label{twobce}
Considering the BC channel with $J=2$ and $N=1$ for each $i\in{\cal
K}$, we can derive $h_{k}(c,i)$ explicitly for $k=1,2,3$ by
employing the results in \cite{goljaf:caplim}, \cite{visjin:duaach},
\cite{weiste:capreg} as follows
\begin{eqnarray}
h_{1}(c,i)&=&e^{2(c_{1}+c_{2})}-\left|I
+\left(H_{1}^{\dagger}(i)H_{1}(i)-H_{2}^{\dagger}(i)H_{2}(i)\right)
\right.\elabel{hfuncI}\\
&&\frac{e^{2c_{1}}-1}{|H_{111}|^{2}(i)+|H_{122}|^{2}(i)}
+\left.H_{2}^{\dagger}(i)H_{2}(i)P \right|^{2}, \nonumber\\
h_{2}(c,i)&=&c_{1}+c_{2}-{\cal C}_{\mbox{Sato}}(P,H(i)),\elabel{hfuncII}\\
h_{3}(c,i)&=&e^{2(c_{1}+c_{2})}-\left|I
+\left(H_{2}^{\dagger}(i)H_{2}(i)-H_{1}^{\dagger}(i)H_{1}(i)\right)
\right.\elabel{hfuncIII}\\
&&\frac{e^{2c_{2}}-1}{|H_{211}|^{2}(i)+|H_{222}|^{2}(i)}
+\left.H_{1}^{\dagger}(i)H_{1}(i)P\right|^{2}.\nonumber
\end{eqnarray}
\end{example}

\section{Proof of Theorem~\ref{rsdifth}}\label{proofmains}

To be convenient for readers, we first outline the proof of
Theorem~\ref{rsdifth}, which consists of the following five parts.

Firstly, in Subsection~\ref{prelim}, we first justify a dual
relationship between the utility-maximization problem in \eq{srates}
and the cost-minimization problem in \eq{costminp}, which is
summarized in Lemma~\ref{equioptso}. Then we prove a claim in
Lemma~\ref{costcc}, which states that when the system state is close
to the unique optimal solution to the cost minimization problem
(called {\it a fixed point}), the capacity of the system will be
fully utilized. The claims stated in
Lemmas~\ref{equioptso}-\ref{costcc} are similar to their
counterparts in \cite{yeyao:heatra}, nevertheless, their concrete
proofs are different due to the different problem formulations and
the difference of the capacity constraints between the two studies.

Secondly, in Subsection~\ref{equidis}, we present an equivalent
queueing model due to the assumption \eq{heavytrafficc} imposed on
the FS-CTMC and justify a functional central limit theorem
(Lemma~\ref{primconlm}) for a DSRP whose arrival rate process is
driven by the FS-CTMC. The main idea used in proving
Lemma~\ref{primconlm} is stemmed from the related discussion in
\cite{dai:broapp}, \cite{daidai:heatra} and the concrete proving
techniques include the conventional functional central limit theorem
(see, e.g., \cite{iglwhi:equfun} and \cite{pro:conran}), random
change of time lemma (see, e.g., \cite{bil:conpro}), establishment
of oscillation inequality (see, e.g., \cite{dai:broapp},
\cite{daidai:heatra}), equivalent conditions of relative compactness
and Skorohod representation theorem (see, e.g.,
\cite{ethkur:marpro}), and etc.

Thirdly, in Subsection~\ref{fluidlim}, we derive the fluid limit
processes for the physical processes under fluid scaling in
Lemma~\ref{fluidlemma} and study the asymptotic behavior for the
fluid limit processes as time evolves in Lemma~\ref{uniattrack}.
Fluid limits are widely used as an intermediate step in justifying
diffusion approximations (see, e.g., \cite{bradai:heatra},
\cite{sto:maxsch}, \cite{yeyao:heatra}, \cite{dai:difapp},
\cite{bhawil:peruse}, \cite{bhawil:difapp}, and references therein).
Nevertheless, our fluid limit is a random process driven by the
FS-CTMC rather than a deterministic function of time as obtained in
the existing studies. This new feature brings us additional
complexity in proving Lemma~\ref{fluidlemma} and
Lemma~\ref{uniattrack}, e.g., comparing with the study in
\cite{yeyao:heatra}, it requires more technical treatment in
handling the FS-CTMC based jumps for the constructed Lyapunov
function. Therefore, by noticing this new feature and the difference
between our optimal scheduling policy and the one in
\cite{yeyao:heatra}, we develop a theory through combining and
generalizing the discussions in \cite{yeyao:heatra},
\cite{dai:poshar}, \cite{bhawil:peruse}, and \cite{bhawil:difapp} to
finish the justifications of Lemma~\ref{fluidlemma} and
Lemma~\ref{uniattrack}.

Fourthly, in Subsection~\ref{slowerfs}, we study the convergence of
the workload and queue length processes on a finer time-scale, which
is an important step in justifying the main result of the paper.
This method has appeared in queueing literature for a while (see,
e.g., \cite{bra:staspa}, \cite{yeyao:heatra}, \cite{sto:maxsch},
\cite{mansto:schfle}, \cite{shasri:patopt}, and etc.) The main
difference between ours and the existing works is as follows: all
the processes concerned in our study involve the jumps introduced by
the random environment and in the meanwhile the processes in
existing studies do not involve this type of jumps. Therefore we
develop a scheme and incorporate it into the framework as used in
\cite{yeyao:heatra} to finish the proof of the convergence
properties for the processes on a finer time-scale.

Finally, in Subsection~\ref{mainthp}, we combine the results
obtained in the previous subsections with the uniqueness of solution
to an associated Skorohod problem and the minimality of the Skorohod
problem to provide a proof for Theorem~\ref{rsdifth}. This type of
techniques have been used in the studies concerning network
scheduling (see, e.g., \cite{yeyao:heatra}, \cite{sto:maxsch},
\cite{mansto:schfle}, \cite{shasri:patopt}, and etc.) Nevertheless,
our justification logic and technical treatment are somewhat
different.

\subsection{Preliminary Lemmas on the Utility-Maximization and
Dual Cost Minimization Problems}\label{prelim}

\begin{lemma}\label{firstl}
Consider the utility-maximization problem in \eq{srates} and suppose
conditions \eq{uconI}-\eq{uconIV} and \eq{homcon} are imposed, then
for a sequence of queue states, $\{q^{l},l=1,2,...\}$, which
satisfies $q^{l}\rightarrow q\in R_{+}^{J}$ as $l\rightarrow\infty$,
we have
\begin{eqnarray}
&&\Lambda_{j}(q^{l},i)\rightarrow\Lambda_{j}(q,i)\;\;
\mbox{as}\;\;l\rightarrow\infty \elabel{ralloccon}
\end{eqnarray}
for each $i\in{\cal K}$ and any $j\in{\cal J}$ such that $q_{j}>0$.
\end{lemma}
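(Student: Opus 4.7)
My plan is to adapt a Berge--maximum--theorem style continuity argument to the present setting, exploiting compactness of the capacity region $\mathcal{R}(i)$ and the separable structure of the objective, while carefully handling the coordinates where $q_j=0$ (at which the $j$-th term degenerates). Since the lemma only asserts continuity in the components $j$ with $q_j>0$, the strategy is to take any subsequential limit of $\Lambda(q^l,i)$, show it solves the utility problem at the limit $q$, and identify its "active" coordinates uniquely with those of $\Lambda(q,i)$.

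Concretely, I would proceed in five short steps. \emph{(i) Compactness.} Because $\mathcal{R}(i)$ defined in \eq{capsur} is closed and bounded, $\{\Lambda(q^l,i)\}$ is bounded in $R_+^J$; any subsequence admits a further convergent sub-subsequence, say $\Lambda(q^{l_k},i)\to\tilde c\in\mathcal{R}(i)$. \emph{(ii) Pass to the limit in optimality.} For each fixed $c\in\mathcal{R}(i)$, $\sum_j U_j(q_j^{l_k},\Lambda_j(q^{l_k},i))\ge\sum_j U_j(q_j^{l_k},c_j)$; sending $k\to\infty$ and using joint continuity of $U_j$ from \eq{uconII} yields $\sum_j U_j(q_j,\tilde c_j)\ge\sum_j U_j(q_j,c_j)$ for all $c\in\mathcal{R}(i)$, so $\tilde c$ maximizes \eq{srates} at $q$. \emph{(iii) Reduce to the lower-dimensional subproblem.} Let $\mathcal{I}=\{j:q_j>0\}$ and $\mathcal{Z}=\mathcal{J}\setminus\mathcal{I}$. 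By \eq{uconI}, terms with $q_j=0$ vanish, so the limiting inequality reads $\sum_{j\in\mathcal{I}}U_j(q_j,\tilde c_j)\ge\sum_{j\in\mathcal{I}}U_j(q_j,c_j)$ for all $c\in\mathcal{R}(i)$; invoking the reset convention of Remark~\ref{resetzero} (the coordinate-wise monotonicity implicit in the existence of the $(J-m)$-user regions along the coordinate faces described after \eq{hmiddle}), the supremum is attained with $c_j=0$ for $j\in\mathcal{Z}$, i.e.\ on the $|\mathcal{I}|$-user reduced capacity region. \emph{(iv) Uniqueness.} By \eq{uconII}, $(c_j)_{j\in\mathcal{I}}\mapsto\sum_{j\in\mathcal{I}}\Phi_j(q_j)\Psi(c_j)$ is strictly concave on a convex compact domain (each $\Phi_j(q_j)>0$ for $j\in\mathcal{I}$), hence has a unique optimizer, which by \eq{zerozero} coincides with $(\Lambda_j(q,i))_{j\in\mathcal{I}}$; therefore $\tilde c_j=\Lambda_j(q,i)$ for every $j\in\mathcal{I}$. \emph{(v) Full-sequence convergence.} Since every convergent subsequence of $\{\Lambda_j(q^l,i)\}$ with $j\in\mathcal{I}$ has the same limit $\Lambda_j(q,i)$, the whole sequence converges, establishing \eq{ralloccon}.

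The principal obstacle is Step (iii): one must justify that, in the limit, the full $J$-dimensional optimization "collapses" onto the $|\mathcal{I}|$-user reduced problem. This rests on the downward/coordinate-wise monotonicity of $\mathcal{R}(i)$ together with the vanishing marginal utility at $q_j=0$ guaranteed by \eq{uconIV}, which together license the reset invoked in Remark~\ref{resetzero}. By contrast, the radial homogeneity condition \eq{homcon} plays no role in this lemma; it will be used only for the fluid and diffusion analyses (Lemmas~\ref{fluidlemma} and \ref{uniattrack}) in later subsections.
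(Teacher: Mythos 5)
Your proposal is correct, and it supplies an argument the paper itself does not: the paper's ``proof'' of this lemma is a one-line deferral to Lemma 6.2 of \cite{yeou:stadat}, so there is nothing in the text to compare against step by step. Your compactness--subsequence--uniqueness scheme (take a convergent subsequence of the maximizers, pass to the limit in the optimality inequality using joint continuity of $U_{j}$, use \eq{uconI} to drop the coordinates with $q_{j}=0$, invoke strict concavity on the remaining coordinates to identify the limit with $\Lambda_{j}(q,i)$, and conclude by the subsequence principle) is exactly the standard Berge-maximum-theorem route that the cited lemma follows, and your observation that \eq{homcon} is not needed here is accurate. The one point that deserves to be made explicit rather than parenthetical is the comprehensiveness (downward-closedness) of ${\cal R}(i)$: your steps (ii)--(iv) use twice that (a) the selection \eq{zerozero}, which maximizes over the reduced $(J-m)$-user region, is in fact a global maximizer of \eq{srates} over all of ${\cal R}(i)$, and (b) the $\cal I$-coordinates of the limit point $\tilde c$ lie in the projection of the reduced region, both of which require that $c\in{\cal R}(i)$ and $0\le c'\le c$ imply $c'\in{\cal R}(i)$. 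This is not literally contained in \eq{capsur} (convexity plus containing the origin does not give it), but it is implicitly assumed by the paper in Remark~\ref{resetzero} (``without violating the constraints'') and holds for the MAC/BC regions of Section~\ref{wnm}; stating it once as a standing property of ${\cal R}(i)$ closes the only gap in your argument.
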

\begin{proof} Consider each specific state $i\in{\cal K}$, then the proof
can be accomplished similarly as for Lemma 6.2 in~\cite{yeou:stadat}
and hence we omit it. $\Box$
\end{proof}

\begin{lemma}\label{equioptso}
For each state $i\in{\cal K}$, the following claims are true.
\begin{enumerate}
\item Under the policy in \eq{zerozero} and the associated convention,
if $c^{*}=\rho(i)$ is an optimal solution to the maximization
problem in \eq{srates} with a queue state $q$ in the utility
function. Then $q^{*}=q$ must be the optimal solution to the
minimization problem in \eq{costminp} with $c=c^{*}$ in the cost
function and with $w=\sum_{j=1}^{J}q^{*}_{j}/\mu_{j}$ in the
constraints.
\item Conversely, if $q^{*}$ is the optimal solution to the minimization
problem in \eq{costminp} with $w>0$ and $\Lambda(q^{*},i)=\rho(i)$
for each $i\in{\cal K}$ in the cost function. Then $q^{*}>0$ and
$\Lambda^{*}(q^{*},i)=\Lambda(q^{*},i)$ must be an optimal solution
to the maximization problem in \eq{srates} with $q=q^{*}$ in the
utility function.
\end{enumerate}
\end{lemma}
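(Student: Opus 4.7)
My plan is to derive the KKT optimality conditions for both \eq{srates} and \eq{costminp}, then use the geometric role of $\rho(i)$ on the sum-capacity facet together with the separable concavity of $U_j$ from \eq{uconII} to match the two sets of conditions. Strict concavity in $c$ makes $\Lambda(q,i)$ the unique maximizer of \eq{srates} whenever $q>0$, and strict monotonicity of $\partial U_j/\partial c_j$ in $q_j$ from \eq{uconIII} makes $V(q,\rho(i))$ strictly convex in $q$ with a diagonal, positive-definite Hessian; this will supply uniqueness on the minimization side.

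For Part 1, assume $c^*=\rho(i)$ solves \eq{srates} at state $q$. I would first rule out $q_j=0$ for any $j\in{\cal J}$: by Remark~\ref{resetzero} that would force $\Lambda_j(q,i)=0$, contradicting $c^*_j=\rho_j(i)>0$. With $q>0$ and $c^*>0$, the nonnegativity multipliers on the $c_j\ge 0$ constraints vanish and KKT supplies $\xi_k\ge 0$ with $\xi_k h_k(\rho(i),i)=0$ such that
$$\frac{\partial U_j(q_j,\rho_j(i))}{\partial c_j}=\sum_{k\in{\cal U}}\xi_k\frac{\partial h_k(\rho(i),i)}{\partial c_j},\qquad j\in{\cal J}.$$
By \eq{rhoji} the point $\rho(i)$ is the equal-coordinate point on the sum-capacity facet $h_{k_U}(c)=\sum_j c_j-C_U$ and sits in the relative interior of that facet; complementary slackness then forces $\xi_k=0$ for $k\neq k_U$, while $\partial h_{k_U}/\partial c_j=1$ collapses the identity to $\partial U_j(q_j,\rho_j(i))/\partial c_j=\xi_{k_U}$, a value common across $j$. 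This common value is precisely the Lagrange multiplier $\zeta$ for the workload constraint of \eq{costminp} at $c=\rho(i)$ and $w=\sum_j q_j/\mu_j$: using $\partial C_j/\partial q_j=\mu_j^{-1}\partial U_j/\partial c_j$ from \eq{costf}, I would check that $q^*=q$, $\zeta=\xi_{k_U}\ge 0$, and vanishing multipliers on the $q_j\ge 0$ constraints satisfy stationarity, primary feasibility (with the workload inequality holding as equality by choice of $w$), complementary slackness, and dual feasibility. Strict convexity of $V$ in $q$ then pins down $q$ as the unique minimizer.

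For Part 2, suppose $q^*$ solves \eq{costminp} with $w>0$ and with $\Lambda(q^*,i)=\rho(i)$ used in the cost function. Since every $\rho_j(i)>0$ while Remark~\ref{resetzero}/\eq{zerozero} would set $\Lambda_j(q^*,i)=0$ whenever $q^*_j=0$, the equality $\Lambda(q^*,i)=\rho(i)$ forces $q^*_j>0$ for every $j$. The remaining conclusion is then immediate from the definition of $\Lambda(\cdot,i)$ as the unique maximizer of \eq{srates}: the identity $\Lambda^*(q^*,i)=\Lambda(q^*,i)=\rho(i)$ is exactly the assertion that $\rho(i)$ is an optimal solution to \eq{srates} at $q=q^*$.

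The main obstacle will be the geometric step that $\rho(i)$ lies in the relative interior of the sum-capacity facet, so that $h_{k_U}$ is the sole active capacity constraint; this uses the equal-coordinate form of $\rho(i)$ from \eq{rhoji} together with the facet description of ${\cal R}(i)$ developed in Section~\ref{wnm}. Once this geometric fact is in place the rest reduces to routine KKT matching backed by strict convexity of the cost and strict concavity of the utilities.
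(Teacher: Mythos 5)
Your Part~1 is essentially the paper's argument (KKT matching via \eq{difequal}, with uniqueness from strict convexity of $V$ in $q$), and your explicit observation that $\rho(i)$ sits in the relative interior of the sum-capacity facet --- so that only the multiplier $\xi_{k_U}$ survives and the common value $\partial U_j(q_j,\rho_j(i))/\partial c_j=\xi_{k_U}$ is $j$-independent --- is if anything more careful than the paper, which simply sets $\theta=-\sum_k\eta_k\,\partial h_k(\rho(i),i)/\partial c_j$ without checking $j$-independence.

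Part~2, however, has a genuine gap: you have read the hypothesis so that the conclusion is vacuous. The phrase ``with $\Lambda(q^*,i)=\rho(i)$ \ldots in the cost function'' is only the paper's (awkward) way of saying that the cost function is evaluated at $c=\rho(i)$; the substantive content of Part~2 --- and the way it is actually invoked later, e.g.\ in the proof of Lemma~\ref{costcc} where it yields $\Lambda_j(q^*(\tilde w^l,\rho(i)),i)=\rho_j(i)$ and $q^*(\epsilon,\rho(i))>0$ --- is precisely that the minimizer $q^*$ of $V(\cdot,\rho(i))$ over $\{q\ge 0:\sum_j q_j/\mu_j\ge w\}$ satisfies $q^*>0$ and that $\rho(i)$ is an optimal solution of \eq{srates} at $q=q^*$. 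If $\Lambda(q^*,i)=\rho(i)$ were taken as given, there would be nothing to prove, as your own last sentence of Part~2 makes plain. Under the correct reading, your derivation of $q^*>0$ from ``$\Lambda_j(q^*,i)=0$ whenever $q^*_j=0$'' collapses, because $\Lambda(q^*,i)=\rho(i)$ is not available. The paper instead proves $q^*>0$ by a perturbation argument intrinsic to the cost problem: supposing some coordinate of $q^*$ vanishes, it restricts $V(\cdot,\rho(i))$ to a line in the constraint hyperplane through $q^*$ and uses \eq{uconIII}--\eq{uconIV} (marginal cost $\partial C_j/\partial q_j$ is zero at $q_j=0$ and strictly increasing) to exhibit a strictly cheaper feasible point, a contradiction. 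Only then does it run the KKT matching in the reverse direction, taking $\eta_{k_U}=\theta$ and $\eta_k=0$ for $k\ne k_U$ and using that $\rho(i)$ lies on the facet $h_{k_U}=0$, to conclude that $\rho(i)$ satisfies the optimality conditions of \eq{srates} at $q=q^*$. Both of these steps are missing from your proposal and need to be supplied.
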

\begin{proof} First of all, without loss of generality, we suppose
that $q>0$. Then it follows from the KKT optimality conditions (see,
e.g., \cite{lue:linnon}) that the solution to the utility
maximization problem in \eq{srates} can be obtained through the
following equations,
\begin{eqnarray}
&&c_{j}\left(\frac{\partial U_{j}(q_{j},c_{j})}{\partial
c_{j}}+\sum_{k=1}^{B}\eta_{k}\frac{\partial h_{k}(c,i)}{\partial
c_{j}}
\right)=0\;\;\mbox{for}\;\;j\in{\cal J},\elabel{uoptsoI}\\
&&\eta_{k}h_{k}(c,i)=0\;\;\mbox{for each}\;\;k\in{\cal U}
\elabel{uoptsoII}
\end{eqnarray}
where $B$ and ${\cal U}$ are defined in \eq{capsur}, $\eta_{k}\geq
0$ for all $k\in{\cal U}$ are the Lagrangian multipliers and
$h_{k}(c,i)$ for each $k\in{\cal U}$ and $i\in{\cal K}$ is defined
in \eq{capsur}. Similarly, the solution to the cost minimization
problem \eq{costminp} can be obtained through the following
equations,
\begin{eqnarray}
&&q_{j}\left(\frac{\partial C_{j}(q_{j},c_{j})}{\partial
q_{j}}+\frac{\theta}{\mu_{j}}\right)=0\;\;\mbox{for
each}\;\;j\in{\cal J},\elabel{optsoII}\\
&&\theta\left(w-\sum_{j=1}^{J}\frac{q_{j}}{\mu_{j}}\right)=0,
\elabel{optsolIII}
\end{eqnarray}
where $\theta\geq 0$ is the Lagrangian multiplier. Moreover, it
follows from \eq{costf} that
\begin{eqnarray}
&&\frac{\partial C_{j}(q_{j},c_{j})}{\partial
q_{j}}=\frac{1}{\mu_{j}}\frac{\partial U_{j}(q_{j},c_{j})}{\partial
c_{j}}. \elabel{difequal}
\end{eqnarray}
Thus, based on the above facts, the claim in the first part of the
lemma can be proved as follows.
By condition \eq{uconII}, we know that
$\sum_{j=1}^{J}U_{j}(q_{j},c_{j})$ is strictly concave in $c$ for
each $q>0$. Therefore $c^{*}=\rho(i)$ is the unique optimal solution
to the utility maximization problem in \eq{srates} for the given
$q>0$ in the utility function, which satisfies
\eq{uoptsoI}-\eq{uoptsoII}. Thus, if we take
\begin{eqnarray}
&&\theta=-\sum_{k=1}^{B}\eta_{k}\frac{\partial
h_{k}(\rho(i),i)}{\partial c_{j}},\nonumber
\end{eqnarray}
then it follows from \eq{uoptsoI} and \eq{difequal} that
\eq{optsoII} holds. Due to condition \eq{uconIV}, we know that
$V(q,c)$ is strictly convex in $q$ for each $c>0$. So the cost
minimization problem in \eq{costminp} has a unique optimal solution
$q^{*}=q$ when $c=c^{*}=\rho(i)$ is in the cost function and
$w=\sum_{j=1}^{J}q^{*}_{j}/\mu_{j}$ is in the constraints.

Conversely, the claim in the second part of the lemma can be proved
as follows. Due to the conditions \eq{uconIII}-\eq{uconIV} and the
relationship \eq{costf}, we know that $V(q,\rho(i))$ is strictly
convex in $q$. Therefore $q^{*}$ is the unique optimal solution to
the cost minimization problem \eq{costminp} with
$\Lambda(q^{*},i)=\rho(i)$. Thus we can prove $q^{*}>0$ by showing a
contradiction.

In fact, without loss of generality, we suppose that there is some
$m\in{\cal J}$ with $m<J$ such that $q^{*}\in{\cal
Q}(k_{1},...,k_{m})$ with $k_{1}\neq 1$ and $k_{m}=J$, where ${\cal
Q}(k_{1},...,k_{m})$ is defined in \eq{zerozero}. Then we can
construct a $2$-dimensional line for some constant $\epsilon\geq w$,
\begin{eqnarray}
&&P_{1}:\;\;\frac{q_{1}}{\mu_{1}}+\frac{q_{J}}{\mu_{J}}+\sum_{j\neq
1,J,j\in{\cal J}}\frac{q^{*}_{j}}{\mu_{j}}=\epsilon\geq w
\elabel{planeI}
\end{eqnarray}
such that it passes through the point $q^{*}$. Now it follows from
\eq{costf} that the function $f(q_{1},\rho(i))$ ($=V(q,\rho(i))$)
with the constraint $P_{1}$ for all
$q=(q_{1},q_{2}^{*},...,q_{J-1}^{*},q_{J})'\in{\cal R}_{+}^{J}$) is
of the following derivative function in $q_{1}\in{\cal R}_{1}^{+}$,
\begin{eqnarray}
\;\;\;\;\frac{\partial f(q_{1},\rho(i))}{\partial
q_{1}}&=&\frac{1}{\mu_{1}}\frac{\partial
U_{1}(q_{1},\rho_{1}(i))}{\partial
c_{1}}-\frac{1}{\mu_{1}}\frac{\partial
U_{J}((\epsilon-\frac{q_{1}}{\mu_{1}}-\sum_{j\neq 1,J,j\in{\cal
J}}\frac{q^{*}_{j}}{\mu_{j}})\mu_{J}, \rho_{J}(i))}{\partial c_{J}},
\elabel{diffine}
\end{eqnarray}
which is strictly increasing in $q_{1}\in{\cal R}_{1}^{+}$ due to
\eq{uconIII}. Moreover, it follows from \eq{diffine} and \eq{uconIV}
that
\begin{eqnarray}
&&\frac{\partial f(0,\rho(i))}{\partial
q_{1}}=-\frac{1}{\mu_{1}}\frac{\partial U_{J}((\epsilon-\sum_{j\neq
1,J,j\in{\cal J}}\frac{q^{*}_{j}}{\mu_{j}})\mu_{J},
\rho_{J}(i))}{\partial c_{J}}<0,
\elabel{negderiv}\\
&&\frac{\partial f(q_{1}^{*},\rho(i))}{\partial
q_{1}}=\frac{1}{\mu_{1}}\frac{\partial
U_{1}(q^{*}_{1},\rho_{1}(i))}{\partial c_{1}}>0. \elabel{posderiv}
\end{eqnarray}
Then, by \eq{negderiv} and \eq{posderiv}, we know that there is a
$\tilde{q}_{1}\in(0,q_{1}^{*})$ such that
\begin{eqnarray}
&&\frac{\partial f(\tilde{q}_{1},\rho(i))}{\partial q_{1}}=0
\end{eqnarray}
which implies that, on the curve $f(q,\rho(i))$ with
$q=(q_{1},q_{2}^{*},...,q_{J-1}^{*},q_{J})'\in R^{J}_{+}$, there
exists a minimal point $\tilde{q}\in{\cal R}_{+}^{J}$ with
$\tilde{q}=(\tilde{q}_{1},q_{2}^{*},...,q_{J-1}^{*},\tilde{q}_{J})'$
such that $V(\tilde{q},\rho(i))<V(q^{*},\rho(i))$
\begin{eqnarray}
&&\tilde{q}_{J}=\left(\epsilon-\frac{\tilde{q}_{1}}{\mu_{1}}
-\sum_{j\neq 1,J,j\in{\cal
J}}\frac{q^{*}_{j}}{\mu_{j}}\right)\mu_{J}. \nonumber
\end{eqnarray}
This contradicts the assumption that $q^{*}$ is the optimal solution
to the cost minimization problem in \eq{costminp}. Hence we can
conclude that $q^{*}>0$.

Finally, if $q^{*}$ is the optimal solution to \eq{costminp} with
$c=\Lambda(q^{*},i)=\rho(i)$ in the cost function, we see that
\eq{optsoII}-\eq{optsolIII} hold with $q=q^{*}$ and
$c=\Lambda(q^{*},i)=\rho(i)$. Therefore we can take
$\eta_{k_{U}}=\theta$ and $\eta_{k}=0$ when $k\neq k_{U}$ in
\eq{uoptsoI}-\eq{uoptsoII} since $q^{*}>0$ and $\rho$ is on the
curve $h_{k_{U}}(c,i)=0$. Hence
$\Lambda^{*}(q^{*},i)=\Lambda(q^{*},i)=\rho(i)$ for each $i\in{\cal
K}$ is an optimal solution to \eq{srates} with $q=q^{*}$ in the
utility function. $\Box$
\end{proof}

\vskip 0.15cm Next, let $\|\cdot\|$ denote the norm of a vector
$q\in R^{J}$ in the sense that $\|q\|=\sum_{j=1}^{J}|q_{j}|$. Then
we have the following lemma.
\begin{lemma}\label{costcc}
For each state $i\in{\cal K}$, the following claims are true.
\begin{enumerate}
\item The cost minimization problem \eq{costminp} has a unique
optimal solution $q^{*}=q$ when $c=\rho(i)$ is in the cost function,
and moreover, $q^{*}(w,\rho(i))$ is continuous in terms of $w$.
\item Assuming that, for any given constant
$\epsilon>0$, there exists another constant $\sigma>0$, which
depends only on $\epsilon$, such that, for any $q\in{\cal
V}(\epsilon,\sigma,i)$ with
\begin{eqnarray}
&&{\cal V}(\epsilon,\sigma,i)\equiv\left\{q\in
R_{+}^{J}:\left\|q-q^{*}(w,\rho(i))
\right\|\leq\sigma\;\;\mbox{and}\;\;w=\sum_{j=1}^{J}
\frac{1}{\mu_{j}}q_{j}\;\;\geq\epsilon\right\}, \elabel{qqwi}
\end{eqnarray}
we have
\begin{eqnarray}
&&\sum_{j=1}^{J}\Lambda_{j}(q,i)=\sum_{j=1}^{J}\rho_{j}(i).
\elabel{sumrhoi}
\end{eqnarray}
\end{enumerate}
\end{lemma}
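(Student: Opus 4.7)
The plan is to handle Part 1 by standard convex-analysis arguments and then reduce the uniformity claim in Part 2 to pointwise continuity of $\Lambda$ via a scaling argument that exploits the radial homogeneity condition \eq{homcon}.

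For Part 1, I first observe that $V(q,\rho(i))=\sum_{j}C_{j}(q_{j},\rho_{j}(i))$ is strictly convex in $q$: by \eq{difequal} one has $\partial C_{j}/\partial q_{j}=(1/\mu_{j})\,\partial U_{j}/\partial c_{j}$, which by \eq{uconIII} is strictly increasing in $q_{j}$, and \eq{uconIV} provides the coercivity needed to attain the minimum over the closed convex set $\{q\ge 0:\sum_{j}q_{j}/\mu_{j}\ge w\}$. Strict convexity then forces uniqueness. Writing the KKT system and using $\partial U_{j}(0,c_{j})/\partial c_{j}=0$ from \eq{uconIV} to rule out $q_{j}^{*}=0$ when $w>0$, I will conclude $q^{*}(w,\rho(i))>0$. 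Continuity of $q^{*}$ in $w$ I plan to obtain most cleanly from the scaling identity $q^{*}(w,\rho(i))=w\,q^{*}(1,\rho(i))$ established in Part 2; alternatively, the implicit function theorem applied to the KKT system yields it directly.

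For Part 2, I would first verify $\Lambda(q^{*}(w,\rho(i)),i)=\rho(i)$ for every $w>0$ by matching KKT systems: the cost-min optimum satisfies $\partial U_{j}(q_{j}^{*},\rho_{j}(i))/\partial c_{j}=\theta$ for a common multiplier $\theta>0$ and all $j\in\mathcal{J}$, which is exactly the KKT condition for the utility-max problem at $q=q^{*}$ when only the middle facet $h_{k_{U}}(c,i)=\sum_{j}c_{j}-C_{U}=0$ is active with multiplier $\eta_{k_{U}}=\theta$, so by strict concavity from \eq{uconII} the unique maximizer is $\rho(i)$. Setting $\hat{q}=q^{*}(1,\rho(i))$, condition \eq{homcon} gives $\Lambda(a\hat{q},i)=\rho(i)$ for every $a>0$, and Lemma~\ref{equioptso} Part 1 then identifies $a\hat{q}$ as the cost-min optimum at workload level $a\sum_{j}\hat{q}_{j}/\mu_{j}=a$, yielding the scaling $q^{*}(w,\rho(i))=w\hat{q}$. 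Now, for $q\in\mathcal{V}(\epsilon,\sigma,i)$ with $w=\sum_{j}q_{j}/\mu_{j}\ge\epsilon$, I set $\tilde{q}=q/w$; then $\|\tilde{q}-\hat{q}\|\le\sigma/w\le\sigma/\epsilon$, and by \eq{homcon}, $\Lambda(q,i)=\Lambda(\tilde{q},i)$. Lemma~\ref{firstl} applied at $\hat{q}>0$ then forces $\Lambda(\tilde{q},i)\to\rho(i)$ as $\sigma/\epsilon\to 0$. Choosing $\sigma=\epsilon\delta$ for a suitably small $\delta>0$ depending only on the geometry of $\mathcal{R}(i)$ at $\rho(i)$ and the continuity modulus of $\Lambda$ at $\hat{q}$ gives a $\sigma$ depending only on $\epsilon$.

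The hard part will be promoting ``$\Lambda(q,i)$ is close to $\rho(i)$'' to ``$\Lambda(q,i)$ sits exactly on the middle facet'', so that $\sum_{j}\Lambda_{j}(q,i)=C_{U}$ holds as a strict equality rather than an approximation. This rests on strict complementarity at $\rho(i)$: the middle-facet multiplier $\eta_{k_{U}}=\theta>0$ is strictly positive (forced by \eq{uconIV} at $q^{*}_{j}>0$), and the remaining facets are strictly slack, $h_{k}(\rho(i),i)<0$ for $k\neq k_{U}$, since $\rho(i)$ lies in the relative interior of the middle facet (consistent with $\rho_{1}(i)=\cdots=\rho_{J}(i)=C_{U}/J$ from \eq{rhoji}). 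Under strict complementarity, standard active-set stability---via the implicit function theorem applied to the reduced KKT system with only $h_{k_{U}}$ active---keeps the active set unchanged for all $\Lambda(\tilde{q},i)$ with $\tilde{q}$ in a small neighborhood of $\hat{q}$, which forces $h_{k_{U}}(\Lambda(q,i),i)=0$ and closes the argument.
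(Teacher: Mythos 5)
Your proposal is correct in substance, but it closes Part~2 by a genuinely different route than the paper. The paper argues by contradiction: it takes a sequence $\sigma^{l}\downarrow 0$ and states $\tilde{q}^{l}$ violating \eq{sumrhoi}, rescales them to the fixed workload $\epsilon$ via \eq{homcon} so that $q^{l}\rightarrow\hat{q}=q^{*}(\epsilon,\rho(i))>0$, invokes Lemma~\ref{firstl} to pass to the limit, and then derives a contradiction between two evaluations of the summed KKT stationarity conditions (\eq{etainf} versus \eq{etainfII}), the strict inequality coming from the strict concavity of $U_{j}$ in $c_{j}$ together with $\Lambda_{1}(\hat{q},i)<\rho_{1}(i)$. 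You instead argue directly: you first make explicit the scaling identity $q^{*}(w,\rho(i))=w\,q^{*}(1,\rho(i))$ (which the paper uses only implicitly in the step $\epsilon q^{*}(\tilde{w}^{l},\rho(i))/\tilde{w}^{l}\rightarrow\hat{q}$, via \eq{homcon} and Lemma~\ref{equioptso}), reduce every $q\in{\cal V}(\epsilon,\sigma,i)$ to a point $\tilde{q}$ within distance $\sigma/\epsilon$ of a single reference point $\hat{q}$, and then show the sum-capacity facet remains exactly active nearby. This buys you a cleaner, quantitative proof (with $\sigma=\epsilon\delta$ explicitly) and, as a by-product, a one-line proof of the continuity claim in Part~1, which the paper merely delegates to \cite{yeyao:heatra}. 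Both proofs rest on the same three pillars: radial homogeneity, the continuity in Lemma~\ref{firstl}, and the duality in Lemma~\ref{equioptso}; and both tacitly use that $\rho(i)$ lies in the relative interior of the facet $h_{k_{U}}=0$ (so that the other constraints are strictly slack there) --- the paper needs this to conclude $\Lambda_{1}(\hat{q},i)<\rho_{1}(i)$ from ``$\Lambda(\hat{q},i)$ is not in the interior of the $C_{U}$ facet,'' so you are not assuming more than it does.

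One refinement to your final step: the implicit-function-theorem route to active-set stability is heavier than necessary and is not fully licensed by the hypotheses, since strict concavity of $U_{j}$ in $c_{j}$ from \eq{uconII} does not guarantee a nondegenerate Hessian of the reduced Lagrangian (consider $\Psi(c_{j})$ with vanishing second derivative at the optimizer). You do not need it: once Lemma~\ref{firstl} gives $\Lambda(\tilde{q},i)\rightarrow\rho(i)$, continuity of the $h_{k}$ and strict slackness $h_{k}(\rho(i),i)<0$ for $k\neq k_{U}$ force $h_{k}(\Lambda(\tilde{q},i),i)<0$ for all $k\neq k_{U}$ when $\tilde{q}$ is close to $\hat{q}$; and since $\tilde{q}>0$ and the objective is strictly increasing in each $c_{j}$ by \eq{uconII}, the maximizer must lie on the capacity surface, so at least one constraint is active --- necessarily $h_{k_{U}}$. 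That yields $\sum_{j}\Lambda_{j}(\tilde{q},i)=C_{U}=\sum_{j}\rho_{j}(i)$ exactly, with no second-order or strict-complementarity machinery.
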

\begin{remark}
The unique optimal solution $q^{*}(w,\rho(i))$ will be referred to
as a {\em fixed point} in the following discussion.
\end{remark}
\begin{proof} For the first part in the lemma, we have the following
observations. Due to condition \eq{uconIV}, we know that $V(q,c)$ is
strictly convex in $q$ for each $c>0$. So the cost minimization
problem in \eq{costminp} has a unique optimal solution $q^{*}=q$
when $c=\rho(i)$ is in the cost function. Moreover, the continuity
of $q^{*}(w,\rho(i))$ in terms of $w$ for each $i\in{\cal K}$ can be
proved similarly as in \cite{yeyao:heatra}.

For the second part in the lemma, it can be proved by showing a
contradiction. As a matter of fact, if the claim is not true for
some $i\in{\cal K}$ and some $\epsilon>0$, then for a sequence of
$\sigma^{l}\downarrow 0$ along $l\in\{1,2,...\}$, there is a
sequence of states $\tilde{q}^{l}\in{\cal V}(\epsilon,\sigma^{l},i)$
with $l=1,2,...$ satisfying
\begin{eqnarray}
&&\left\|\tilde{q}^{l}-q^{*}(\tilde{w}^{l},\rho(i))\right\|\rightarrow
0\;\;\mbox{as}\;\;l\rightarrow\infty,
\elabel{contradI}\\
&&\tilde{w}^{l}=\sum_{j=1}^{J}\frac{1}{\mu_{j}}\tilde{q}_{j}^{l}
\geq\epsilon\;\;\mbox{for all}\;\;l=1,2,... \elabel{contradII}
\end{eqnarray}
such that
\begin{eqnarray}
&&\sum_{j=1}^{J}\Lambda_{j}(\tilde{q}^{l},i)<\sum_{j=1}^{J}\rho_{j}(i)\;\;
\mbox{for all}\;\;l=1,2,... \elabel{contradIII}
\end{eqnarray}
Otherwise, if there is some $l_{0}\in\{1,2,...\}$ such that ${\cal
V}(\epsilon,\sigma^{l},i)$ are empty for all $l\geq l_{0}$, then
\eq{sumrhoi} is automatically true for the given $i\in{\cal K}$ and
$\epsilon>0$, which is a contradiction. Now, let
\begin{eqnarray}
&&q^{l}=(\epsilon/\tilde{w}^{l})\tilde{q}^{l}\;\;\mbox{so
that}\;\;w^{l}=\sum_{j=1}^{J}\frac{q_{j}^{l}}{\mu_{j}}=\epsilon\;\;
\mbox{for all}\;\;l=1,2,..., \elabel{contradIV}
\end{eqnarray}
then it follows from \eq{contradI}-\eq{contradII}, \eq{contradIV},
\eq{homcon} and Lemma~\ref{equioptso} that, along $l\in\{1,2,...\}$,
\begin{eqnarray}
&&q^{l}=\frac{\epsilon
q^{*}(\tilde{w}^{l},\rho(i))}{\tilde{w}^{l}}+\frac{\epsilon}{\tilde{w}^{l}}
\left(\tilde{q}^{l}-q^{*}(\tilde{w}^{l},\rho(i))\right)\rightarrow
\hat{q}=q^{*}(\epsilon,\rho(i))>0, \elabel{qlqsconI}
\end{eqnarray}
which implies that $q^{l}>0$ for all large enough $l\in\{1,2,...\}$,
and moreover, we also have
\begin{eqnarray}
&&\frac{\epsilon
q^{*}(\tilde{w}^{l},\rho(i))}{\tilde{w}^{l}}\rightarrow \hat{q}.
\elabel{qlqsconII}
\end{eqnarray}
Hence, by \eq{qlqsconI}, \eq{uconII}, \eq{zerozero}, and the similar
proof as used for the second part of Lemma~\ref{equioptso}, we have,
for all large enough $l\in\{1,2,...\}$,
\begin{eqnarray}
&&\Lambda(\hat{q},i)>0\;\;\;\mbox{and}\;\;\; \Lambda(q^{l},i)>0.
\elabel{doubgeqo}
\end{eqnarray}
Furthermore, by \eq{homcon} and \eq{contradIII}, we have, for each
$l\in\{1,2,...\}$,
\begin{eqnarray}
&&\sum_{j=1}^{J}\Lambda_{j}(q^{l},i)<\sum_{j=1}^{J}\rho_{j}(i).
\elabel{qlqsconII}
\end{eqnarray}
Thus it follows from \eq{qlqsconII} and Lemma~\ref{firstl} that
\begin{eqnarray}
&&\sum_{j=1}^{J}\Lambda_{j}(\hat{q},i)\leq\sum_{j=1}^{J}\rho_{j}(i).
\elabel{hatqlqscon}
\end{eqnarray}
Notice that the condition in \eq{uconII} and the fact in
\eq{qlqsconI} imply that $\Lambda(\hat{q},i)$ and $\Lambda(q^{l},i)$
for large enough $l$ can only locate on the capacity surface of
${\cal R}(i)$ (that is defined in \eq{capsur}). Then, by combining
this fact with \eq{qlqsconII}-\eq{hatqlqscon} and
Lemma~\ref{firstl}, we can see that $\Lambda(\hat{q},i)$ can not be
in the interior of the facet corresponding to $C_{U}$. Hence we can
conclude that there is some $j\in{\cal J}$, e.g., without loss of
generality, take $j=1$ such that
\begin{eqnarray}
&&\Lambda_{1}(\hat{q},i)<\rho_{1}(i).\elabel{compI}
\end{eqnarray}

So, on the one hand, it follows from \eq{doubgeqo} and \eq{uoptsoI}
that there exists a set of Lagrange multipliers
$\{\eta_{jk}\geq0,k\in\{1,...,B+J\},j\in{\cal J}\}$ such that
\begin{eqnarray}
\sum_{j=1}^{J}\sum_{k=1}^{B}\eta_{k}\frac{\partial
h_{k}(\Lambda(\hat{q},i),i)}{\partial c_{j}} &=&-\sum_{j=1}^{J}
\frac{\partial U_{j}(\hat{q}_{j},\Lambda_{j}(\hat{q},i))}{\partial
c_{j}}
\elabel{etainf}\\
&=&-\lim_{l\rightarrow\infty}\sum_{j=1}^{J} \frac{\partial
U_{j}(\hat{q}_{j},\Lambda_{j}(q^{*}(\tilde{w}^{l},\rho(i)),i))}
{\partial c_{j}}
\nonumber\\
&=&-\sum_{j=1}^{J} \frac{\partial U_{j}(\hat{q}_{j},
\rho_{j}(i))}{\partial c_{j}}.\nonumber
\end{eqnarray}
where  $B$ is defined in \eq{capsur}, the first equality of
\eq{etainf} follows from \eq{uoptsoI}-\eq{uoptsoII} and
\eq{doubgeqo}, the second equality follows from \eq{qlqsconII},
\eq{homcon} and \eq{ralloccon}, and the third equality follows from
Lemma~\ref{equioptso}.

On the other hand, due to the strict concavity of
$U_{j}(q_{j},c_{j})$ in $c_{j}$ for each $j\in{\cal J}$ as stated in
\eq{uconII}, it follows from \eq{doubgeqo}-\eq{compI} that
\begin{eqnarray}
&&\sum_{j=1}^{J}\sum_{k=1}^{B}\eta_{k}\frac{\partial
h_{k}(\Lambda(\hat{q},i),i)}{\partial c_{j}} \elabel{etainfII}\\
&<&-\frac{\partial U_{1}(\hat{q}_{1},\rho_{1}(i))}{\partial
c_{1}}-\lim_{l\rightarrow\infty}\sum_{j\neq 1,j\in{\cal
J}}\frac{\partial
U_{j}(\hat{q}_{j},\Lambda_{j}(q^{*}(\tilde{w}^{l},\rho(i)),i))}
{\partial c_{j}}\nonumber\\
&=&-\sum_{j=1}^{J} \frac{\partial U_{j}(\hat{q}_{j},
\rho_{j}(i))}{\partial c_{j}}.\nonumber
\end{eqnarray}

Obviously, there is a contradiction between \eq{etainf} and
\eq{etainfII}. Thus the assumption stated in
\eq{contradI}-\eq{contradIII} is not true, which implies that the
second claim in the lemma holds for the third case. Hence we finish
the proof of Lemma~\ref{costcc}. $\Box$
\end{proof}

\subsection{Equivalent Processes in Distribution}\label{equidis}

First of all, we define a sequence of jump times in terms of the
FS-CTMC process $\alpha(\cdot)$ as follows,
\begin{eqnarray}
&&\tau_{0}\equiv
0,\;\;\tau_{n}\equiv\inf\{t>\tau_{n-1}:\alpha(t)\neq\alpha(t^{-})\}.
\elabel{markovjt}
\end{eqnarray}
So it follows from Proposition 5.2.1 in page 376 of
\cite{res:advsto} that $\tau_{n}\rightarrow\infty$ a.s. as
$n\rightarrow\infty$ and each sample path of $\alpha(\cdot)$ has at
most finitely many jump points over any bounded interval $[0,T]$
since the state space of $\alpha(\cdot)$ is finite. Moreover, we
introduce a new stochastic process $\alpha_{\cdot}(\cdot)$ induced
by $\alpha(\cdot)$ up to each given time $t\in[0,\infty)$, i.e.,
$\alpha_{t}(s)=\alpha(s)$ with $s\in[0,t]$. In addition, for each
$j\in\{1,...,J\}$, let
\begin{eqnarray}
&&A^{r}_{j}(r^{2}\cdot,\alpha_{\cdot}(\cdot))\equiv
\left\{A^{r}_{j}(r^{2}t,\alpha_{t}(\cdot)),t\in[0,\infty)\right\}
\elabel{arjrt}
\end{eqnarray}
denote the counting process that the arrival rate corresponding to
$A^{r}_{j}(r^{2}\cdot,\alpha_{\cdot}(\cdot))$ during time interval
$[r^{2}\tau_{n},r^{2}\tau_{n+1})$ is
$\lambda^{r}_{j}(\alpha(\tau_{n}))$ for $n\in\{0,1,2,...,\}$.
Similarly, let
\begin{eqnarray}
&&A^{r}_{j}(r^{2}\cdot,\alpha^{r}_{r^{2}\cdot}(\cdot))\equiv
\left\{A^{r}_{j}(r^{2}t,\alpha^{r}_{r^{2}t}(\cdot)),t\in[0,\infty)\right\}
\elabel{aalpt}
\end{eqnarray}
denote the corresponding process with arrival rate
$\lambda^{r}_{j}(\alpha^{r}(\tau^{r}_{n}))$ during time interval
$[\tau^{r}_{n},\tau^{r}_{n+1})$ for each $n\in\{0,1,2,...,\}$, where
$\{\tau_{n}^{r},n\in\{0,1,...,\}\}$ is a sequence of jump times in
terms of $\alpha^{r}(\cdot)$. Due to the second condition in
\eq{heavytrafficc}, the definition of DSRP, the assumptions among
the arrival and FS-CTMC fading processes, and Theorem 5.4 in page 85
of \cite{kal:foumod}, we have,
\begin{eqnarray}
&&E^{r}_{j}(\cdot)\equiv A_{j}^{r}(r^{2}\cdot,\alpha_{\cdot}(\cdot))
=^{d}A^{r}_{j}(r^{2}\cdot,\alpha^{r}_{r^{2}\cdot}(\cdot)),
\elabel{enee}
\end{eqnarray}
where the notation $=^{d}$ denotes "{\em equals in distribution}".
Then it follows from \eq{queuelength} and the assumptions among the
arrival, service and FS-CTMC fading processes again that
\begin{eqnarray}
&&\hat{Q}^{r}_{j}(\cdot)=^{d}\frac{1}{r}E^{r}_{j}(\cdot)
-\frac{1}{r}S^{r}_{j}(\bar{T}^{r}_{j}(\cdot)) \elabel{centerhatQ}
\end{eqnarray}
where
\begin{eqnarray}
&&\bar{T}^{r}_{j}(\cdot)\equiv\int_{0}^{\cdot}
\Lambda_{j}\left(\bar{Q}^{r}(s),\alpha(s)\right)ds
=^{d}\frac{1}{r^{2}}T_{j}^{r}(r^{2}\cdot), \elabel{expresslambda}\\
&&\bar{Q}_{j}^{r}(t)\equiv\frac{1}{r^{2}}Q^{r}_{j}(r^{2}t)
\elabel{barqueue}
\end{eqnarray}
and we have used the radial homogeneity of $\Lambda(q,i)$ in
\eq{homcon} for \eq{expresslambda}. Now, let
\begin{eqnarray}
&&\hat{E}^{r}(\cdot)=(\hat{E}^{r}_{1}(\cdot),...,\hat{E}^{r}_{J}(\cdot))'
\;\;\mbox{with}\;\;\hat{E}^{r}_{j}(\cdot)=\frac{1}{r}
\left(A^{r}_{j}(r^{2}\cdot,\alpha(\cdot))
-r^{2}\bar{\lambda}^{r}_{j}(\cdot,\alpha_{\cdot}(\cdot))\right),
\elabel{ecenterrs}\\
&&\hat{S}^{r}(\cdot)=(\hat{S}^{r}_{1}(\cdot),...,\hat{S}^{r}_{J}(\cdot))'
\;\;\;\;\mbox{with}\;\;
\hat{S}_{j}^{r}(\cdot)=\frac{1}{r}\left(S_{j}(r^{2}\cdot)
-\mu_{j}r^{2}\cdot\right) \elabel{scenterrs}
\end{eqnarray}
for each $j\in\{1,...,J\}$ with
\begin{eqnarray}
\bar{\lambda}^{r}_{j}(\cdot,\alpha_{\cdot}(\cdot))
&\equiv&\int_{0}^{\cdot}\lambda_{j}^{r}(\alpha(s))ds
\elabel{averagerate}\\
&=^{d}&\int_{0}^{\cdot}\lambda_{j}^{r}(\alpha^{r}(r^{2}s))ds
\nonumber\\
&=&\frac{1}{r^{2}}\int_{0}^{r^{2}\cdot}\lambda_{j}^{r}(\alpha^{r}(s))ds.
\nonumber
\end{eqnarray}
Moreover, define
\begin{eqnarray}
\bar{\lambda}^{r}(\cdot,\alpha_{\cdot}(\cdot))
&=&\left(\bar{\lambda}^{r}_{1}(\cdot,\alpha_{\cdot}(\cdot)),...,
\bar{\lambda}^{r}_{J}(\cdot,\alpha_{\cdot}(\cdot))\right)'.
\elabel{lambdaeo}
\end{eqnarray}
Then we have the following lemma.
\begin{lemma}\label{primconlm}
For the diffusion-scaled processes in \eq{ecenterrs}-\eq{scenterrs},
the following convergence in distribution is true as
$r\rightarrow\infty$, i.e.,
\begin{eqnarray}
&&\left(\hat{E}^{r}(\cdot),\hat{S}^{r}(\cdot)\right)\Rightarrow
\left(H^{E}(\cdot), (\Gamma^{B})^{1/2}B^{S}(\cdot)\right).
\elabel{esjointc}
\end{eqnarray}
where
$\Gamma^{B}=\mbox{diag}(\mu_{1}\beta_{1}^{2},...,\mu_{J}\beta_{J}^{2})$.
\end{lemma}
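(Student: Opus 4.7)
The plan is to exploit the distributional identity \eqref{enee} and work with $E^r_j(\cdot)=A^r_j(r^2\cdot,\alpha_\cdot(\cdot))$, so that throughout the analysis the driving Markov chain $\alpha(\cdot)$ can be treated as a \emph{fixed} random environment rather than one scaling with $r$. Since $\alpha$ lives on a finite state space, Proposition~5.2.1 of \cite{res:advsto} guarantees that on any bounded interval $[0,T]$ there are almost surely only finitely many jump epochs $0=\tau_0<\tau_1<\cdots<\tau_{N(T)}\le T$, and these epochs are measurable with respect to $\sigma(\alpha)$ and independent of the interarrival and service primitives. This reduction isolates the two genuinely independent sources of randomness in $(\hat E^r,\hat S^r)$ that must be handled: the renewal fluctuations of the DSRP (governed by $\alpha$) and the renewal fluctuations of the service sequences.

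For the service component I would invoke the classical renewal functional central limit theorem (Iglehart–Whitt \cite{iglwhi:equfun}; see also \cite{pro:conran}). Since $\{v_j(k)\}_{k\ge1}$ are i.i.d.\ with mean $1/\mu_j$ and squared coefficient of variation $\beta_j^2$, one immediately obtains
\begin{equation*}
\hat S^r_j(\cdot)\Rightarrow (\mu_j\beta_j^2)^{1/2}B^S_j(\cdot)=(\Gamma^B_{jj})^{1/2}B^S_j(\cdot),
\end{equation*}
jointly in $j\in\{1,\dots,J\}$ by mutual independence of the $J$ service streams. For the arrival component I would condition on a sample path of $\alpha$ and, on each interval $[\tau_n,\tau_{n+1})$, apply the renewal FCLT to the i.i.d.\ interarrival sequence with rate $\lambda^r_j(\alpha(\tau_n))$ and squared coefficient of variation $\alpha_j^2(\alpha(\tau_n))$. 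The heavy-traffic condition \eqref{heavytrafficc} ensures that the centering $r^2\bar\lambda^r_j(\cdot,\alpha_\cdot(\cdot))$ matches the rescaled mean, so the only remaining contribution is the Brownian fluctuation. Piecing the resulting Brownian increments together across intervals (using independence of renewal epochs after each switch and a random change of time in the spirit of \cite{bil:conpro}) yields, conditionally on $\alpha$, the convergence
\begin{equation*}
\hat E^r_j(\cdot)\Rightarrow H^E_j(\cdot)=\int_0^\cdot(\Gamma^E_{jj}(\alpha(s)))^{1/2}\,dB^E_j(s).
\end{equation*}
Removing the conditioning on $\alpha$ preserves the weak limit since the limiting law is a continuous function of the path of $\alpha$.

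Joint convergence of $(\hat E^r,\hat S^r)$ then follows from (i) the conditional (given $\alpha$) mutual independence of arrival and service primitives across all $j$, which makes the finite-dimensional limits of $(\hat E^r,\hat S^r)$ into independent Gaussian vectors, and (ii) the independence of $B^E$ and $B^S$ built into the statement. Combining marginal tightness and convergence of finite-dimensional distributions via a Skorohod representation argument (see \cite{ethkur:marpro}) gives \eqref{esjointc}.

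The main obstacle I anticipate is the behavior of $\hat E^r_j$ in shrinking neighborhoods of the jump epochs $\tau_n$ of $\alpha$: if a partial inter-arrival interval straddles a regime switch, one must check that the mismatch between the pre-switch and post-switch arrival rates contributes negligibly on the diffusion scale. I would handle this by establishing an oscillation inequality of the form $\sup_{|s-t|\le\delta}|\hat E^r_j(t)-\hat E^r_j(s)|\to 0$ in probability as $r\to\infty$ and then $\delta\to 0$, following the technique developed in \cite{dai:broapp,daidai:heatra}. Combined with the Arzel\`a–Ascoli characterization of relative compactness in $D[0,\infty)$ (Ethier–Kurtz, Theorem~3.8.6), this gives tightness of $\{\hat E^r_j\}$; the limit is then identified uniquely as $H^E_j$ by the piecewise FCLT argument above.
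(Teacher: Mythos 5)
Your proposal is correct and follows essentially the same route as the paper: reduction to a fixed environment via \eq{enee}, the classical renewal FCLT for the service streams, a piecewise FCLT over the holding intervals $[\tau_n,\tau_{n+1})$ of $\alpha(\cdot)$ with the residual interarrival times at switch epochs absorbed into a negligible correction, tightness via an oscillation/modulus-of-continuity bound in the spirit of \cite{dai:broapp,daidai:heatra} together with the Ethier--Kurtz relative-compactness criterion, and identification of the limit as $H^{E}(\cdot)$ by piecing the Brownian increments together. The obstacle you flag (interarrival intervals straddling a regime switch) is exactly the point the paper handles with its shifted renewal process and bounded jump correction, so your plan matches the paper's proof in both structure and technique.
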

\begin{proof} It follows from the heavy traffic condition
\eq{heavytrafficc}, the functional central limit theorem (see, e.g.,
\cite{iglwhi:equfun} and \cite{pro:conran}), and the random change
of time lemma (see, e.g., page 151 of \cite{bil:conpro}), Lemma 8.4
in \cite{daidai:heatra} that, for each $n\in\{0,1,...\}$,
\begin{eqnarray}
&&\left(\hat{E}^{r}(\tau_{n}+t)-\hat{E}^{r}(\tau_{n})\right)I_{\{0\leq
t<\sigma_{n}\}}
\elabel{inttntna}\\
&=&\frac{1}{r}\left(A^{r}(r^{2}(\tau_{n}+t),
\alpha_{\tau_{n}+t}(\cdot))-A^{r}(r^{2}\tau_{n},
\alpha_{\tau_{n}}(\cdot))\right)I_{\{0\leq t<\sigma_{n}\}}
\nonumber\\
&&-r\left(\bar{\lambda}^{r}(\tau_{n}+t, \alpha_{\tau_{n}+t}(\cdot))
-\bar{\lambda}^{r}(\tau_{n},\alpha_{\tau_{n}}(\cdot))\right)
I_{\{0\leq t<\sigma_{n}\}}
\nonumber\\
&=&\frac{1}{r}\tilde{A}^{r}(r^{2}(t-\phi_{n}/r^{2})I_{\{0\leq
t<\sigma_{n}\}})+e_{n}-r\left(\bar{\lambda}^{r}(\tau_{n}+t,
\alpha_{\tau_{n}+t}(\cdot))
-\bar{\lambda}^{r}(\tau_{n},\alpha_{\tau_{n}}(\cdot))\right)
I_{\{0\leq t<\sigma_{n}\}}
\nonumber\\
&\Rightarrow&\left(\Gamma^{E}(\alpha(\tau_{n}))\right)^{\frac{1}{2}}
I_{\{0\leq t<\sigma_{n}\}}B^{E}(t)\;\;\;\;\;\;\;\;\;\;\;\;\;\;\;\;\;
\mbox{as}\;\;r\rightarrow\infty
\nonumber\\
&=^{d}&\left(H^{E}(\tau_{n}+t) -H^{E}(\tau_{n})\right)I_{\{0\leq
t<\sigma_{n}\}}, \nonumber
\end{eqnarray}
where $\sigma_{n}=\tau_{n+1}-\tau_{n}$ is an exponentially
distributed random variable independent of all other random events
concerned since $\alpha(\cdot)$ is a FS-CTMC,
$e_{n}=(e_{n}^{1},...,e_{n}^{J})'$ with $e_{n}^{j}=0$ if
$\hat{E}^{r}_{j}(\cdot)$ has a jump at $\tau_{n}$ and 1 otherwise
for each $j\in\{1,...,J\}$, $\tilde{A}^{r}(\cdot)$ is a renewal
process with rate vector
$\lambda^{r}(\alpha(\tau_{n}))=(\lambda^{r}_{1}(\alpha(\tau_{n})),
...,\lambda^{r}_{J}(\alpha(\tau_{n})))'$ and
\begin{eqnarray}
&&\tilde{A}^{r}(r^{2}(\cdot-\phi_{n}/r^{2}))
=(\tilde{A}_{1}^{r}(r^{2}(\cdot-\phi_{n}^{1}/r^{2})),...,
\tilde{A}^{r}_{J}(r^{2}(\cdot-\phi_{n}^{J}/r^{2})))' \nonumber
\end{eqnarray}
with $\phi_{n}=(\phi_{n}^{1},...,\phi_{n}^{J})'$ being a
$J$-dimensional random vector whose $j$th component $\phi_{n}^{j}$
for each $j\in\{1,...,J\}$ denotes the remaining arrival time
beginning at $\tau_{n}$ for a packet to the $j$th queue with rate
$\lambda_{j}(\alpha(\tau_{n}))$ switched from
$\lambda_{j}(\alpha(\tau_{n-1}))$ at $\tau_{n}$ for each
$n\in\{1,2,...\}$. Moreover, to be convenient for later purpose, we
reexpress \eq{inttntna} as follows, over each
$[\tau_{n},\tau_{n+1})$ and as $r\rightarrow\infty$,
\begin{eqnarray}
\tilde{E}^{r,n}(\cdot)
&\equiv&\hat{E}^{r}(\tau_{n}+\cdot)-\hat{E}^{r}(\tau_{n})
\elabel{inttntn}\\
&\Rightarrow&H^{E}(\tau_{n}+\cdot)-H^{E}(\tau_{n}) \nonumber\\
&\equiv&\tilde{H}^{E,n}(\cdot). \nonumber
\end{eqnarray}
Then, by following \eq{inttntn} and by generalizing the discussion
in the proof for Theorem 3.2 in \cite{dai:broapp} or Lemma 8.2 in
\cite{daidai:heatra}, we can reach a proof for the claim in
\eq{esjointc}.

To do so, we first establish the relative compactness for
$\hat{E}^{r}(\cdot)$ with $r\in\{1,2,...,\}$. As a matter of fact,
define the modulus of continuity in terms of a function
$x(\cdot):[0,\infty)\rightarrow R^{d}$ with some integer $d>0$ for
each given $T>0$ and $\delta>0$ as follows,
\begin{eqnarray}
&&w(x,\delta,T)\equiv\inf_{t_{l}}\max_{l}
\mbox{Osc}\left(x,[t_{l-1},t_{l})\right) \elabel{cmodulus}
\end{eqnarray}
where the infimum takes over the finite sets $\{t_{l}\}$ of points
satisfying $0=t_{0}<t_{1}<...<t_{m}=T$ and $t_{l}-t_{l-1}>\delta$
for $l=1,...,m$, and
\begin{eqnarray}
&&\mbox{Osc}(x,[t_{l-1},t_{l}])=\sup_{t_{1}\leq s\leq t\leq
t_{2}}\|x(t)-x(s)\|_{2}\elabel{osclationd}
\end{eqnarray}
with $\|\cdot\|_{2}$ denoting the Euclidean norm in $R^{d}$. Then it
follows from Corollary 7.4 in page 129 of \cite{ethkur:marpro} that
the justification of the relative compactness is equivalent to
proving the following two conditions:

(a) For each $\eta>0$ and rational $t\geq 0$, there exists a
constant $c(\eta,t)$ such that
\begin{eqnarray}
&&\liminf_{r\rightarrow\infty}P\left\{\left\|\hat{E}^{r}(t)
\right\|_{2}\leq c(\eta,t)\right\}\geq 1-\eta. \nonumber
\end{eqnarray}

(b) For each $\eta>0$ and $T>0$, there exists a $\delta>0$ such that
\begin{eqnarray}
&&\limsup_{r\rightarrow\infty}P\left\{w(\hat{E}^{r},\delta,T)
\geq\eta\right\}\leq\eta. \nonumber
\end{eqnarray}

To show (a), we first define $N(t)\equiv\max\{n,\tau_{n}\leq t\}$
for each $t\in(0,\infty)$. Then, for each rational $t>0$, take a
$T>0$ such that $t\in(0,T]$ and define a sequence of events: ${\cal
S}_{l}\equiv\{\omega:N(T,\omega)\leq l\}$ for each
$l\in\{1,2,...,\}$. Since $\alpha(\cdot)$ has at most finitely many
jumps a.s. over $[0,T]$, we know that the sequence of probabilities
$P\{S_{l}\}$ increases monotonously to the unity as
$l\rightarrow\infty$. Thus, for the given $\eta>0$, there is some
large enough $L>0$ such that
\begin{eqnarray}
&&P\{{\cal S}_{L}\}\geq 1-\frac{\eta}{2}.\elabel{ptail}
\end{eqnarray}

Moreover, it follows from \eq{inttntn} and Remark 7.3 in page 129 of
\cite{ethkur:marpro} that $\tilde{E}^{r}(\cdot)$ satisfies the
following compact containment condition, i.e., for each $\eta>0$ and
$T>0$, there is a constant $K_{n}>0$ for each $n\in\{0,1,...,\}$
such that
\begin{eqnarray}
\inf_{r}P\left\{{\cal T}^{r,n}\right\}\geq
1-\frac{\eta}{2L}\;\;\;\mbox{with}\;\;{\cal
T}^{r,n}\equiv\{\omega:\|\tilde{E}^{r}(t)\|_{2} \leq
K_{n},t\in[0,T]\cap[0,\sigma_{n})\}. \elabel{intaun}
\end{eqnarray}
In addition, for each $n\in\{1,2,...,\}$, let
$\Delta_{n}=(\delta_{n}^{1},...,\delta_{n}^{J})'$ with
$\delta_{n}^{j}=1$ if $\hat{E}^{r}_{j}(\cdot)$ has a jump at
$\tau_{n}$ and zero otherwise for each $j\in\{1,...,J\}$. Then, for
each $t\in[\tau_{N(t)},\tau_{N(t)+1})$, we have
\begin{eqnarray}
&&\hat{E}^{r}(t)=\hat{E}^{r}(\tau_{N(t)})
+\tilde{E}^{r,N(t)}(t-\tau_{N(t)}),
\elabel{sumjI}\\
&&\hat{E}^{r}(\tau_{n}) -\hat{E}^{r}(\tau_{n}^{-})
=\frac{1}{r}\Delta_{n}. \elabel{sumjII}
\end{eqnarray}
Therefore it follows from \eq{sumjI}-\eq{sumjII} that, along each
sample path and for any $t_{1},t_{2}\in[0,T]$,
\begin{eqnarray}
\mbox{Osc}\left(\hat{E}^{r},[t_{1},t_{2}]\right)
\leq\sum_{n=0}^{N(t_{2})}
\mbox{Osc}\left(\tilde{E}^{r,n},[t_{1}-\tau_{n},t_{2}-\tau_{n}]
\cap[0,\sigma_{n})\right)+\frac{1}{r}(N(t_{2})-N(t_{1})).
\elabel{oscineI}
\end{eqnarray}
Thus it follows from \eq{oscineI} that, along each sample path in
${\cal S}_{L}\cap{\cal T}^{r,n}$ with $r,n\in\{1,2,...\}$,
\begin{eqnarray}
\left\|\hat{E}^{r}(t)\right\|_{2}
&\leq&\left\|\hat{E}^{r}(0)\right\|_{2}
+\mbox{Osc}\left(\hat{E}^{r},[0,t]\right)
\elabel{oscineII}\\
&\leq&2\sum_{n=0}^{L}\sup_{t\in[0,T]\cap[0,\sigma_{n}}
\left\|\tilde{E}^{r,n}(t)\right\|_{2} +\frac{L}{r}. \nonumber
\end{eqnarray}
Hence, for the above arbitrarily given $\eta>0$, each rational
$t\in[0,T]$, and large enough $r\in\{1,2,...\}$, we know that
\begin{eqnarray}
&&P\left\{\left\|\hat{E}^{r}(t)\right\|_{2}\leq
2^{L+1}\sum_{n=0}^{L}K_{n}\right\}\elabel{condia}\\
&\geq&P\left\{\left\{\left\|\hat{E}^{r}(t)\right\|_{2}\leq
2^{L+1}\sum_{n=0}^{L}K_{n}\right\}\bigcap{\cal
S}_{L}\right\}\nonumber\\
&\geq&P\left\{{\cal S}_{L}\right\}-\sum_{n=0}^{L}
P\left\{\left\{\left\|\tilde{E}^{r,n}(t)\right\|_{2}
>\left(2K_{n}-\frac{1}{r2^{L}}\right)\right\}
\bigcap{\cal S}_{L}\;\mbox{for some}\;
t\in[0,T]\cap[0,\sigma_{n})\right\}\nonumber\\
&\geq&P\left\{{\cal S}_{L}\right\}-\sum_{n=0}^{L}
P\left\{\left\{\left\|\tilde{E}^{r,n}(t)\right\|_{2}>K_{n}\right\}
\bigcap{\cal S}_{L}\;\mbox{for some}\;
t\in[0,T]\cap[0,\sigma_{n})\right\}\nonumber\\
&>&1-\eta,\nonumber
\end{eqnarray}
where the second inequality follows from \eq{oscineII} and the fact
that
\begin{eqnarray}
&&P\{\|aX+bY\|_{2}\geq K_{1}+K_{2}\}\leq P\left\{\|X\|_{2}\geq
\frac{K_{1}}{2|a|}\right\}+P\left\{\|Y\|_{2}\geq\frac{K_{2}}{2|b|}\right\}
\nonumber
\end{eqnarray}
for any real number $a,b$ and random vectors $X,Y$. Moreover, the
last inequality in \eq{condia} follows from \eq{ptail} and
\eq{intaun}. Thus condition (a) holds.

Next we prove the condition (b) to be true. Due to \eq{inttntn}, we
know that, for each $\eta>0$ and $T>0$, there exists a
$\delta_{n}>0$ for each $n\in\{0,1,...,\}$ such that
\begin{eqnarray}
\limsup_{r\rightarrow\infty}P\left\{w(\tilde{E}^{r,n},
\delta_{n},[0,T]\cap[0,\sigma_{n}))
\geq\frac{\eta}{2^{L}L}\right\}\leq\frac{\eta}{2^{L}L}.
\elabel{maxineI}
\end{eqnarray}
Now take $\delta=\min\{\delta_{0},...,\delta_{L}\}>0$, then for each
$r\in\{1,2,...,\}$ and each sample path in ${\cal S}_{L}$,
\begin{eqnarray}
w(\hat{E}^{r},\delta,T)
&\leq&\sum_{n=0}^{J}w(\tilde{E}^{r,n},\delta,[0,T]
\cap[0,\sigma_{n}))+\frac{L}{r}
\elabel{maxineII}\\
&\leq&\sum_{n=0}^{J}w(\tilde{E}^{r,n},\delta_{n},[0,T]
\cap[0,\sigma_{n}))+\frac{L}{r} \nonumber
\end{eqnarray}
where the first inequality follows from \eq{cmodulus} and
\eq{oscineI}, and the second inequality follows from 1.9 in page 326
of \cite{jacshi:limthe}. Therefore, for each large enough
$r\in\{1,2,...,\}$, it follows from \eq{maxineI}-\eq{maxineII} that
\begin{eqnarray}
&&P\left\{w(\hat{E}^{r},\delta,T)\geq\eta\right\}
\nonumber\\
&<&\frac{\eta}{2}+
P\left\{\left\{w(\hat{E}^{r},\delta,T)\geq\eta\right\}
\bigcap{\cal S}_{L}\right\}\nonumber\\
&<&\frac{\eta}{2}+\sum_{n=0}^{L}P\left\{\left\{w(\tilde{E}^{r,n},
\delta_{n},[0,T]\cap[0,\sigma_{n}))
\geq\frac{1}{2^{L-1}}\left(\frac{\eta}{L}-\frac{1}{r}\right)\right\}
\bigcap{\cal S}_{L}\right\}
\nonumber\\
&<&\frac{\eta}{2}+\sum_{n=0}^{L}P\left\{\left\{w(\tilde{E}^{r,n},
\delta_{n},[0,T]\cap[0,\sigma_{n}))
\geq\frac{\eta}{2^{L}L}\right\}\bigcap{\cal S}_{L}\right\}
\nonumber\\
&\leq&\eta. \nonumber
\end{eqnarray}
So the condition (b) is true and hence we know that
$\hat{E}^{r}(\cdot)$ is relatively compact for $r\in\{1,2,...,\}$.

Finally, consider any subsequence ${\cal R}_{1}\subseteq
\{1,2,...,\}$ such that, along $r\in{\cal R}_{1}$, we have
\begin{eqnarray}
\hat{E}^{r}(\cdot) \Rightarrow\hat{E}(\cdot)\;\;\mbox{(a process to
be identified)}. \elabel{scnm}
\end{eqnarray}
Then it follows from the Skorohod representation theorem (see, e.g.,
Theorem 3.1.8 in page 102 of \cite{ethkur:marpro}) and the random
change of time lemma (see, e.g., page 151 of \cite{bil:conpro})
that, for each $n\in\{0,1,...,\}$ and along $r\in{\cal R}_{1}$,
\begin{eqnarray}
&&\left(\hat{E}^{r}(\cdot)I_{\{\cdot\leq\tau_{n+1}\}},
\hat{E}^{r}(\cdot)I_{\{\cdot\leq\tau_{n}\}}\right)
\Rightarrow\left(\hat{E}(\cdot)I_{\{\cdot\leq\tau_{n+1}\}},
\hat{E}(\cdot)I_{\{\cdot\leq\tau_{n}\}}\right). \nonumber
\end{eqnarray}
Then, by the method of induction in terms of $n\in\{0,1,...,\}$,
\eq{inttntn}, and the continuous-mapping theorem (see, e.g., Theorem
3.4.1 in page 85 of \cite{whi:stopro}), we can conclude that, along
$r\in{\cal R}_{1}$, the limit in \eq{scnm} is $H^{E}(\cdot)$.
Moreover, since ${\cal R}_{1}$ is arbitrarily chosen, we know that
$\hat{E}^{r}\Rightarrow H^{E}(\cdot)$ along $r\in\{1,2,...,\}$.
Moreover, by the independence assumptions and the functional central
limit theorem, we know that the claim in Lemma~\ref{primconlm} is
true. $\Box$
\end{proof}

\subsection{Fluid Limiting Processes}\label{fluidlim}

For each $j\in{\cal J}$, $t\geq 0$ and $r>0$, we define the
fluid-scaled processes as follows,
\begin{eqnarray}
&&\bar{W}^{r}(t)\equiv\frac{1}{r^{2}}W^{r}(r^{2}t),\;\;
\bar{Y}^{r}(t)=\frac{1}{r^{2}}Y^{r}(r^{2}t),\;\;
\bar{E}^{r}_{j}(t)\equiv\frac{1}{r^{2}}E_{j}^{r}(t),\;\;
\bar{S}^{r}_{j}(t)\equiv\frac{1}{r^{2}}S_{j}^{r}(r^{2}t)\elabel{qebar}
\end{eqnarray}
and use $\bar{Q}^{r}(\cdot),\bar{E}^{r}(\cdot),\bar{S}^{r}(\cdot)$,
$\bar{T}^{r}(\cdot)$ to denote the corresponding vector processes.
Further, let
\begin{eqnarray}
&&\bar{Q}_{j}(t)=\bar{Q}_{j}(0)+\bar{\lambda}_{j}(t)-\mu_{j}
\bar{T}_{j}(t)\;\;\mbox{for each}\;\;j\in{\cal J},\elabel{limqtnon}\\
&&\bar{W}(t)=\sum_{j=1}^{J}\frac{\bar{Q}_{j}(t)}{\mu_{j}}
=\bar{W}(0)+\bar{Y}(t),\elabel{limwtnon}\\
&&\bar{Y}(t)=\sum_{j=1}^{J}\left(\int_{0}^{t}\rho_{j}(\alpha(s))ds
-\bar{T}_{j}(t)\right),\elabel{limwtnonI}\\
&&\bar{T}_{j}(t)=\int_{0}^{t}\bar{\Lambda}_{j}(\bar{Q}(s),\alpha(s))ds,
\elabel{limtnon}
\end{eqnarray}
where, for each $i\in{\cal K}$,
\begin{eqnarray}
&&\bar{\Lambda}(q,i)=\left\{\begin{array}{ll} \bar{\Lambda}^{{\cal
Q}(k_{1},...,k_{m})}(q,i)'&\mbox{if}\;\;q\in{\cal
Q}(k_{1},...,k_{m})\;\;
\mbox{and a given}\;\;m\in{\cal J},\\
\rho(i)\;\;\;\;&\mbox{if}\;\;q=0.
\end{array}\right. \elabel{barLAMnon}
\end{eqnarray}
and, for each $j\in{\cal J}$,
\begin{eqnarray}
&&\bar{\Lambda}_{j}^{{\cal
Q}(k_{1},...,k_{m})}(q,i)=\left\{\begin{array}{ll}
\Lambda_{j}(q,i)=\Lambda_{j}^{{\cal
Q}(k_{1},...,k_{m})}(q,i)&\mbox{if}\;\;j\neq k_{l},l\in\{1,...,m\},\\
\rho_{j}(i)\;\;\;\;&\mbox{if}\;\;j=k_{l},l\in\{1,...,m\}.
\end{array}\right. \elabel{barLAMnonI}
\end{eqnarray}
Thus we have the following lemma.
\begin{lemma}\label{fluidlemma}
Suppose $\bar{Q}^{r}(0)\Rightarrow\bar{Q}(0)$ as
$r\rightarrow\infty$, then under the utility-maximization allocation
policy $\Lambda(q,i)$ in \eq{zerozero}, any subsequence of
$\{r,r=1,2,...,\}$ has a further subsequence $\{r_{l},l=1,2,...\}$
such that the following convergence in distribution is true,
\begin{eqnarray}
&&\left(\bar{E}^{r_{l}}(\cdot),\bar{S}^{r_{l}}(\cdot),
\bar{T}^{r_{l}}(\cdot),\bar{Q}^{r_{l}}(\cdot),\bar{W}^{r_{l}}(\cdot),
\bar{Y}^{r_{l}}(\cdot)\right)
\Rightarrow\left(\bar{E}(\cdot),\bar{S}(\cdot),
\bar{T}(\cdot),\bar{Q}(\cdot),\bar{W}(\cdot),\bar{Y}(\cdot)\right)
\elabel{fluidcon}
\end{eqnarray}
as $l\rightarrow\infty$, where the limit in \eq{fluidcon} satisfies
\eq{limqtnon}-\eq{barLAMnon}. Moreover, if $\bar{Q}(0)=0$, then the
convergence in \eq{fluidcon} is true along the whole sequence
$r=1,2,...$ with the limit satisfying
\begin{eqnarray}
&&\bar{E}(t)=\bar{\lambda}(t,\alpha_{t}(\cdot)),\;\;\bar{S}(t)=\mu(t),\;\;
\bar{T}(t)=\bar{c}(t,\alpha_{t}(\cdot)),
\elabel{fluidlimit}\\
&&\bar{Q}_{j}(t)=\bar{W}(t)=\bar{Y}(t)=0,\elabel{fluidlimitI}
\end{eqnarray}
for each $t\geq 0$ and $j\in{\cal J}$, where
$\mu(t)\equiv(\mu_{1},...,\mu_{J})'t$, and
\begin{eqnarray}
\bar{\lambda}(t,\alpha_{t}(\cdot))
&=&\left(\bar{\lambda}_{1}(t,\alpha_{t}(\cdot)),...,
\bar{\lambda}_{J}(t,\alpha_{t}(\cdot))\right)',\;\;
\bar{\lambda}_{j}(t,\alpha_{t}(\cdot))
\equiv\int_{0}^{t}\lambda_{j}(\alpha(s))ds,
\elabel{lambdae}\\
\bar{c}(t,\alpha_{t}(\cdot))
&=&\left(\bar{c}_{1}(t,\alpha_{t}(\cdot)),...,
\bar{c}_{J}(t,\alpha_{t}(\cdot))\right)',\;\;\;\;\;
\bar{c}_{j}(t,\alpha_{t}(\cdot))\equiv\int_{0}^{t}\rho_{j}(\alpha(s))ds.
\elabel{barvc}
\end{eqnarray}
\end{lemma}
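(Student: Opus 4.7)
I would proceed in three stages: extract subsequential fluid limits via tightness, identify the dynamic equations satisfied by any such limit, and prove uniqueness in the special case $\bar Q(0)=0$ so as to upgrade subsequential to full-sequence convergence.

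\emph{Stage 1 (tightness of the fluid-scaled processes).} Because $\bar E^r(t)=\hat E^r(t)/r$ and $\hat E^r$ is tight by Lemma~\ref{primconlm}, the functional law of large numbers delivers $\bar E^r\Rightarrow\bar\lambda(\cdot,\alpha_\cdot(\cdot))$; similarly the classical renewal FLLN yields $\bar S^r\Rightarrow\mu(\cdot)$. Since every allocation $\Lambda(q,i)$ lies in the bounded set ${\cal R}(i)$ and ${\cal K}$ is finite, there is a deterministic $M$ with $\Lambda_j(q,i)\le M$ for all $(q,i,j)$, so each $\bar T^r_j$ is uniformly $M$-Lipschitz and hence $C$-tight. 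Tightness of $\bar Q^r$, $\bar W^r$, $\bar Y^r$ then follows from \eq{limqtnon}--\eq{limwtnonI} and the continuous mapping theorem, giving a subsequence $\{r_l\}$ on which the joint convergence \eq{fluidcon} holds.

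\emph{Stage 2 (identification).} Switching by Skorohod representation to a common probability space on which the convergence is a.s.~uniform on compact intervals, I would split the time axis according to which components of $\bar Q$ are strictly positive. On the open set $\{s:\bar Q_j(s)>0\}$, Lemma~\ref{firstl} supplies exactly the continuity of $\Lambda_j(\cdot,\alpha(s))$ needed to pass to the limit inside $\bar T^r_j(t)=\int_0^t\Lambda_j(\bar Q^r(s),\alpha(s))ds$. On the complementary set $\{s:\bar Q_j(s)=0\}$, the flow balance \eq{limqtnon} together with the heavy-traffic identity $\lambda_j(i)=\mu_j\rho_j(i)$ forces $\mu_j\bar\Lambda_j(\bar Q(s),\alpha(s))=\lambda_j(\alpha(s))$, which is precisely the prescription in \eq{barLAMnon}--\eq{barLAMnonI}. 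This pins down $\bar T$ and through \eq{limqtnon}--\eq{limwtnonI} pins down $\bar Q$, $\bar W$, $\bar Y$.

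\emph{Stage 3 (uniqueness from zero and main obstacle).} When $\bar Q(0)=0$, the candidate $\bar Q\equiv 0$, $\bar T_j(t)=\int_0^t\rho_j(\alpha(s))ds$, $\bar W\equiv\bar Y\equiv 0$ trivially solves \eq{limqtnon}--\eq{barLAMnonI} because $\bar\Lambda(0,\alpha(s))=\rho(\alpha(s))$ makes the arrival rate $\lambda_j(\alpha)$ exactly balance $\mu_j\rho_j(\alpha)$. To show this is the only fluid trajectory from zero I would use the Lyapunov function $V(\bar Q(t))=\sum_j\bar Q_j(t)/\mu_j=\bar W(t)$, exploiting that Lemma~\ref{costcc} gives $\sum_j\bar\Lambda_j(q,i)=\sum_j\rho_j(i)=\sum_j\lambda_j(i)/\mu_j$ near any fixed point, so that the drift of $\bar W$ above zero is nonpositive and escape from the origin is ruled out; uniqueness then upgrades subsequential to full-sequence convergence. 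The main obstacle is precisely this third stage: the genuine discontinuity of $\Lambda(q,i)$ on the coordinate hyperplanes, together with the FS-CTMC jumps of $\alpha(\cdot)$, blocks a direct Gronwall-style comparison, and I would adapt the Lyapunov/state-space-collapse machinery of \cite{yeyao:heatra}, \cite{dai:poshar}, \cite{bhawil:peruse}, \cite{bhawil:difapp} to the regime-switching setting, crucially using the separability in \eq{uconIII} and the symmetry in \eq{rhoji} to absorb the jump contributions from $\alpha(\cdot)$ on any bounded interval.
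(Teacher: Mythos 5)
Your Stages 1 and 2 track the paper's proof: uniform Lipschitz continuity of $\bar T^r$ from the boundedness of ${\cal R}(i)$ gives $C$-tightness, and the identification of the limit splits exactly as the paper does between regular points with $\bar Q_j(t)>0$ (where Lemma~\ref{firstl} and dominated convergence let you pass to the limit in the time integral) and points with $\bar Q_j(t)=0$ (where nonnegativity forces $d\bar Q_j/dt=0$ and hence $d\bar T_j/dt=\rho_j(\alpha(t))$). The problem is Stage 3, and the gap is concrete: the Lyapunov function you propose, $V(\bar Q(t))=\bar W(t)$, cannot work. Under the heavy-traffic centering $\lambda_j(i)=\mu_j\rho_j(i)$ the fluid workload satisfies $\bar W(t)=\bar W(0)+\bar Y(t)$ with $\bar Y$ nondecreasing (this is \eq{limwtnon}, a direct consequence of \eq{bmaxcap}), so its drift is everywhere \emph{nonnegative}; the claim that it is ``nonpositive above zero'' is false, and monotonicity of $\bar W$ alone gives no control. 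What must actually be excluded is a trajectory that leaves the origin into a state $q\neq 0$ at which the maximizer $\Lambda(q,i)$ lies on a non-central facet of the capacity surface, where $\sum_j\Lambda_j(q,i)<\sum_j\rho_j(i)$ and $\bar W$ strictly increases. Lemma~\ref{costcc} cannot be invoked for this: its full-capacity conclusion \eq{sumrhoi} holds only in a $\sigma$-neighborhood of the fixed-point manifold at workloads $w\geq\epsilon>0$, i.e., precisely away from the origin where an escaping trajectory would live.

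The paper's actual mechanism is the dual cost $\psi(q,i)=\sum_{j}C_j(q_j,\rho_j(i))$ of \eq{psicj}. Its derivative along the fluid path equals $\sum_j\bigl(\rho_j(\alpha(t))-\Lambda_j(\bar Q(t),\alpha(t))\bigr)\frac{\partial U_j}{\partial c_j}I_{\{\bar Q_j(t)>0\}}$, which is $\leq 0$ by the first-order optimality of $\Lambda$ and concavity of the $U_j$ (see \eq{lyapunov}); the weights $\partial U_j/\partial c_j$ are exactly what turn the nonnegative unused-capacity terms into a nonpositive sum, which the unweighted workload drift cannot do. The regime-switching jumps of $\alpha(\cdot)$ are then absorbed multiplicatively using the product form \eq{uconII} and the symmetry \eq{rhoji}, yielding $\psi(\bar Q(t),\alpha(t))\leq\kappa\,\psi(\bar Q(0),\alpha(0))=0$ in \eq{psiine} and hence $\bar Q\equiv 0$, $\bar W\equiv\bar Y\equiv 0$. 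Your closing sentence gestures at the right literature and correctly flags the jump-handling issue, but without replacing $\bar W$ by this cost functional the uniqueness-from-zero step does not close.
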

\begin{proof} It follows from \eq{expresslambda} and \eq{tjqalpha}
that $\bar{T}^{r}(\cdot)$ is a.s. uniformly Lipschitz continuous
with Lipschitz constant $\max_{i\in{\cal
K}}(\sum_{j=1}^{J}\rho_{j}(i))$ for each $r>0$, which implies that
it is absolutely continuous and differentiable at almost every
$t\in(0,\infty)$ (in other words, almost every $t\in(0,\infty)$ is a
{\em regular point} of $\bar{T}^{r}(\cdot)$). Thus the sequence of
stochastic processes $\{\bar{T}^{r}(\cdot),r=1,2,...\}$ is
$C$-tight, that is, it is tight and each weak limit point is in
$C[0,\infty)^{J}$ a.s., where $C[0,\infty)^{J}$ is the space of all
$J$-dimensional continuous functions over $[0,\infty)$ and is
endowed with the Skorohod $J_{1}$-topology (see, e.g, Page 116 of
\cite{ethkur:marpro}). Moreover, it follows from
Lemma~\ref{primconlm} that
$\left(\bar{E}^{r}(\cdot),\bar{S}^{r}(\cdot)\right)$ is also
$C$-tight. In addition, by \eq{centerhatQ}-\eq{expresslambda} and
\eq{qebar}, we know that
\begin{eqnarray}
&&\bar{Q}_{j}^{r}(t)=\bar{E}^{r}_{j}(t)
-\bar{S}_{j}^{r}(\bar{T}_{j}^{r}(t)). \elabel{rqueuelength}
\end{eqnarray}
Hence it follows from \eq{rqueuelength}, \eq{wyte} and the random
time change lemma in page 151 of \cite{bil:conpro} that the
following sequence is $C$-tight as well,
\begin{eqnarray}
&&\left(\bar{E}^{r}(\cdot),\bar{S}^{r}(\cdot),\bar{T}^{r}(\cdot),
\bar{Q}^{r}(\cdot),\bar{W}^{r}(\cdot),\bar{Y}^{r}(\cdot)\right),
\elabel{barestq}
\end{eqnarray}
where we have used the independent assumption related to
$\{\alpha^{r}(\cdot),r=1,2,...\}$, the second condition in
\eq{heavytrafficc} and the fact that
\begin{eqnarray}
&&\frac{1}{r^{2}}\int_{0}^{r^{2}\cdot}\rho_{j}(\alpha^{r}(s))ds
=^{d}\int_{0}^{\cdot}\rho_{j}(\alpha(s))ds. \elabel{rhoequiv}
\end{eqnarray}
Therefore, any subsequence of the processes in \eq{barestq} has a
further subsequence convergent in distribution. Now suppose
$(\bar{E}(\cdot),\bar{S}(\cdot),\bar{T}(\cdot),\bar{Q}(\cdot),
\bar{W}(\cdot),\bar{Y}(\cdot))$ is a weak limit point corresponding
to the further subsequence indexed by $\{r_{l},l=1,2,...\}$. Then,
by the Skorohod representation theorem (see, e.g., Theorem 3.1.8 in
page 102 of \cite{ethkur:marpro}), there is a common supporting
probability space such that
\begin{eqnarray}
&&\left(\bar{E}^{r_{l}}(\cdot),\bar{S}^{r_{l}}(\cdot),
\bar{T}^{r_{l}}(\cdot),\bar{Q}^{r_{l}}(\cdot),\bar{W}^{r_{l}}(\cdot),
\bar{Y}^{r_{l}}(\cdot)\right)
\rightarrow(\bar{\lambda}(\cdot,\alpha_{\cdot}(\cdot)),
\mu(\cdot),\bar{T}(\cdot),\bar{Q}(\cdot),\bar{W}(\cdot),\bar{Y}(\cdot))
\elabel{subestq}
\end{eqnarray}
u.o.c. a.s. as $l\rightarrow\infty$ and the limiting processes in
\eq{subestq} satisfy \eq{limqtnon}-\eq{barLAMnon}. Here we only need
to justify \eq{barLAMnon} to be true and other equations hold
obviously.

As a matter of fact, due to \eq{subestq}, we know that the limit
processes in \eq{subestq} are uniformly Lipschitz continuous a.s. So
our discussion will base on a fix sample path and each regular point
$t>0$ over an interval $(\tau_{n-1},\tau_{n})$ with
$n\in\{1,2,...\}$ for $\bar{T}_{j}$ with $j\in{\cal J}$. It follows
from \eq{limqtnon} that $\bar{Q}$ is differential at $t$ and
satisfies
\begin{eqnarray}
&&\frac{d\bar{Q}_{j}(t)}{dt}=\lambda_{j}(\alpha(t))-\mu_{j}
\frac{d\bar{T}_{j}(t)}{dt}\elabel{barqbart}
\end{eqnarray}
for each $j\in{\cal J}$. If $\bar{Q}_{j}(t)=0$ for some $j\in{\cal
J}$, then it follows from $\bar{Q}_{j}(\cdot)\geq 0$ that
\begin{eqnarray}
&&\frac{d\bar{Q}_{j}(t)}{dt}=0\;\;\mbox{which implies
that}\;\;\frac{d\bar{T}_{j}(t)}{dt}
=\frac{\lambda_{j}(\alpha(t))}{\mu_{j}} =\rho_{j}(\alpha(t)).
\elabel{firstfc}
\end{eqnarray}
If $\bar{Q}_{j}(t)>0$ for the $j\in{\cal J}$, then there exists a
finite interval $(a,b)\in[0,\infty)$ containing $t$ in it such that
$\bar{Q}_{j}(s)>0$ for all $s\in(a,b)$ and hence we can take small
enough $\delta>0$ such that $\bar{Q}_{j}(t+s)>0$ with
$s\in(0,\delta)$. Thus it follows from \eq{expresslambda} and
\eq{ralloccon} that
\begin{eqnarray}
&&\left|\frac{1}{\delta}\left(\bar{T}^{r_{k}}_{j}(t+\delta)
-\bar{T}^{r_{k}}_{j}(t)\right)-\Lambda_{j}(\bar{Q}(t),\alpha(t))\right|
\elabel{bartevl}\\
&\leq&\frac{1}{\delta}\int_{0}^{\delta}
\left|\Lambda_{j}(\bar{Q}^{r_{k}}(t+s),\alpha(t+s))
-\Lambda_{j}(\bar{Q}(t+s),\alpha(t+s))\right|ds
\nonumber\\
&&+\frac{1}{\delta}\int_{0}^{\delta}
\left|\Lambda_{j}(\bar{Q}(t+s),\alpha(t+s))
-\Lambda_{j}(\bar{Q}(t),\alpha(t))\right|ds \nonumber\\
&\rightarrow&\frac{1}{\delta}\int_{0}^{\delta}
\left|\Lambda_{j}(\bar{Q}(t+s),\alpha(t+s))
-\Lambda_{j}(\bar{Q}(t),\alpha(t))\right|ds\;\;\;\;\mbox{as}\;\;
k\rightarrow\infty\nonumber
\end{eqnarray}
where we have used the Lebesgue dominated convergence theorem for
the last claim in \eq{bartevl}. Due to the right-continuity of
$\alpha(\cdot)$, the Lipschitz continuity of $\bar{Q}(\cdot)$ and
\eq{ralloccon}, the last expression in \eq{bartevl} tends to zero as
$\delta\rightarrow 0^{+}$. Hence we have
\begin{eqnarray}
&&\frac{d\bar{T}_{j}(t)}{dt} =\frac{d\bar{T}_{j}(t^{+})}{dt}
=\bar{\Lambda}_{j}(\bar{Q}(t),\alpha(t))\;\;\mbox{for
each}\;\;j\in{\cal J} \elabel{fracbart}
\end{eqnarray}
which implies that the claims in \eq{limtnon}-\eq{barLAMnon} are
true.

Next, we introduce the following cost objective with $c(i)=\rho(i)$
in \eq{costf} for each $i\in{\cal K}$,
\begin{eqnarray}
&&\psi(q,i)\equiv\sum_{j=1}^{J}C_{j}(q_{j},\rho_{j}(i)).
\elabel{psicj}
\end{eqnarray}
Then, for each regular time $t\geq 0$ of $\bar{Q}(t)$ over time
interval $(\tau_{n-1},\tau_{n})$ with a given $n\in\{1,2,...\}$, we
have
\begin{eqnarray}
\;\;\;\;\;\;\;\;\frac{d\psi(\bar{Q}(t),\alpha(t))}{dt}
&=&\sum_{j=1}^{J}\left(\frac{d\bar{Q}_{j}(t)}{dt}\frac{\partial
C_{j}(\bar{Q}_{j}(t),\rho_{j}(\alpha(t))}{\partial \bar{Q}_{j}(t)}
+\frac{d\rho_{j}(\alpha(t))}{dt}\frac{\partial
C_{j}(\bar{Q}_{j}(t),\rho_{j}(\alpha(t))}{\partial
\rho_{j}(\alpha(t))}\right)
\elabel{lyapunov}\\
&=&\sum_{j=1}^{J}\left(\rho_{j}(\alpha(t))
-\Lambda_{j}(\bar{Q}(t),\alpha(t))\right)\frac{\partial
U_{j}(\bar{Q}_{j}(t),\rho_{j}(\alpha(t)))}{\partial
\rho_{j}(\alpha(t))}I_{\{\bar{Q}_{j}(t)>0\}}
\nonumber\\
&\leq&0\nonumber
\end{eqnarray}
where we have used \eq{barqbart}, \eq{fracbart}, \eq{costf} and the
fact that the sample paths of $\alpha(\cdot)$ are piecewise
constants for the second equality of \eq{lyapunov}, and we have used
the concavity of the utility functions, the fact that
$\Lambda_{j}(\bar{Q}(t),\alpha(t))$ is the optimal solution to
\eq{srates}, and the similar arguments as used in
\cite{yeyao:heatra} for the last inequality of \eq{lyapunov}.
Therefore, for any given $n\in\{0,1,2,...\}$ and each
$t\in[\tau_{n},\tau_{n+1})$, we have,
\begin{eqnarray}
0&\leq&\psi(\bar{Q}(t),\alpha(t)) \elabel{psiine}\\
&\leq&\psi(\bar{Q}(\tau_{n}), \alpha(\tau_{n}))
\nonumber\\
&=&\prod_{l=1}^{n}\left(\frac{d\Psi(\rho_{1}(\alpha(\tau_{l})))}
{dc_{1}}\right)\left(\frac{d\Psi(\rho_{1}(\alpha(\tau_{l-1})))}{dc_{1}}
\right)^{-1}\psi(\bar{Q}(0),\alpha(0)) \nonumber\\
&\leq&\kappa\psi(\bar{Q}(0),\alpha(0)), \nonumber
\end{eqnarray}
where the second inequality in \eq{psiine} follows from
\eq{lyapunov} and the last equality in \eq{psiine} follows from
\eq{costf}, \eq{uconII}-\eq{uconIII}, \eq{rhoji}, the continuity of
$\bar{Q}(t)$ at all jump times $\tau_{n}$ with $n\in\{1,2,...,\}$.
Moreover, the $\kappa$ in the last inequality of \eq{psiine} is a
positive constant given by
\begin{eqnarray}
&&\kappa=\sum_{m=1}^{M}\prod_{l=1}^{m}
\left(\frac{d\Psi(\rho_{1}(i_{l}))}
{dc_{1}}\right)\left(\frac{d\Psi(\rho_{1}(i_{l-1}))}{dc_{1}}
\right)^{-1}\elabel{kappaexp}
\end{eqnarray}
where the indices in each of the $M$ products of \eq{kappaexp}
satisfy that $i_{k}\neq i_{l}$ if $k\neq l$ for
$k,l\in\{0,1,...,m\}$ and the integer $M$ can be explicitly
calculated since the state space of $\alpha(\cdot)$ is finite.

If $\bar{Q}(0)=0$, it follows from \eq{psiine} that $\bar{Q}(t)=0$
for all $t\geq 0$. Thus, by \eq{limwtnon}, we know that
$\bar{W}(t)=\bar{Y}(t)=0$ for all $t\geq 0$. Moreover, it follows
from\eq{firstfc} that the third claim in \eq{fluidlimit} is true.
Hence, under the assumption that $\bar{Q}(0)=0$, all the claims
stated in the lemma are true. $\Box$
\end{proof}

Next, since $y_{j}=C_{j}(q_{j},\rho_{j}(i))$ is strictly increasing
in $q_{j}$ for each $i\in{\cal K}$ and $j\in{\cal J}$, its inverse
$C_{j}^{-1}(y_{j},\rho_{j}(i))$ is well defined and is strictly
increasing in $y_{j}$. So, for each $\kappa\geq 0$, we can define
\begin{eqnarray}
&&\tilde{g}(\kappa)\equiv\sum_{i=1}^{K}\max_{\|q\|\leq\kappa}\psi(q,i)
\;\;\; \mbox{and}\;\;\;g(\kappa)\equiv\sum_{i=1}^{K}
\sum_{j=1}^{J}C_{j}^{-1}(\tilde{g}(\kappa),\rho_{j}(i)).
\elabel{tildeg}
\end{eqnarray}
Then we have the following lemma.
\begin{lemma}\label{uniattrack}
Under the same conditions as used in Lemma~\ref{fluidlemma}, if
$\|\bar{Q}(0)\|\leq\chi$ for some constant $\chi$, then $\bar{Q}(t)$
is bounded for each $t\geq 0$, i.e.,
\begin{eqnarray}
&&\|\bar{Q}(t)\|\leq g(\chi)\;\;\mbox{for each}\;\;t\geq
0.\elabel{barqleqg}
\end{eqnarray}
Moreover, there exists a time $T_{\chi,\epsilon}>0$ for any given
$\epsilon>0$ such that
\begin{eqnarray}
&&\left\|\bar{Q}(t)-q^{*}(\bar{W}(t),\rho(\alpha(t)))\right\|
<\epsilon\;\;\mbox{for all}\;\;t\geq
T_{\chi,\epsilon}\elabel{barepsilon}
\end{eqnarray}
and in particularly, if
$\bar{Q}(0)=q^{*}(\bar{W}(0),\rho(\alpha(0)))$, then
$\bar{Q}(t)=\bar{Q}(0)$ a.s. for all $t\in[\tau_{0},\tau_{1})$.
\end{lemma}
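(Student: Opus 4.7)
The plan is to treat the three assertions in the stated order, exploiting the Lyapunov-type inequality \eq{psiine} already established inside the proof of Lemma~\ref{fluidlemma}. For Part~1, that inequality gives $\psi(\bar{Q}(t),\alpha(t))\leq\kappa\,\psi(\bar{Q}(0),\alpha(0))$ on every sample path, with the finite constant $\kappa$ from \eq{kappaexp}. Since $\|\bar{Q}(0)\|\leq\chi$ and each $C_j(\cdot,\rho_j(i))$ is strictly increasing in its first argument with $C_j(0,\rho_j(i))=0$, the right-hand side is dominated by $\tilde{g}(\chi)$ after absorbing $\kappa$ into the maximum-over-$i$ defining $\tilde{g}$ in \eq{tildeg}. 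Inverting componentwise yields $\bar{Q}_j(t)\leq C_j^{-1}(\tilde{g}(\chi),\rho_j(\alpha(t)))$ for each $j\in{\cal J}$, and summing over $j$ together with the outer sum over $i\in{\cal K}$ in the definition of $g$ produces the uniform bound $\|\bar{Q}(t)\|\leq g(\chi)$ claimed in \eq{barqleqg}.

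For the ``in particular'' clause (Part~3), I would appeal directly to Lemma~\ref{equioptso}. If $\bar{Q}(0)=q^{*}(\bar{W}(0),\rho(\alpha(0)))$ with $\bar{W}(0)>0$, the KKT analysis used in the proof of that lemma identifies $\rho(\alpha(0))$ as the utility-maximizer at state $\bar{Q}(0)$, i.e.\ $\Lambda(\bar{Q}(0),\alpha(0))=\rho(\alpha(0))$. Substituting into the fluid balance \eq{barqbart} with $\alpha$ frozen at $\alpha(0)$ on $[\tau_{0},\tau_{1})$ gives $d\bar{Q}_{j}(t)/dt=\mu_{j}(\rho_{j}(\alpha(0))-\Lambda_{j}(\bar{Q}(t),\alpha(0)))=0$ at $t=0$; continuity of $\Lambda$ near the fixed point supplied by Lemma~\ref{firstl}, combined with uniqueness of the fluid ODE, then propagates the identity throughout the interval, yielding $\bar{Q}(t)\equiv\bar{Q}(0)$ for $t\in[\tau_{0},\tau_{1})$. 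The case $\bar{W}(0)=0$ is immediate since then $\bar{Q}(0)=0$.

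Part~2 is the main step and I would carry it out by a Lyapunov-plus-compactness argument restricted to each sub-interval $[\tau_{n},\tau_{n+1})$ on which $\alpha(t)\equiv i$ is constant. On such an interval $\psi(\bar{Q}(t),i)$ is monotone non-increasing by \eq{lyapunov}, and combining the strict concavity condition \eq{uconII} with Lemma~\ref{equioptso} shows that whenever $\bar{Q}(t)\notin{\cal V}(\epsilon,\sigma,i)$ the right-hand side of \eq{lyapunov} is bounded above by a strictly negative $-\delta(\epsilon,\sigma,i)<0$ that is uniform over the compact set $\{q\in R_{+}^{J}:\|q\|\leq g(\chi)\}\setminus{\cal V}(\epsilon,\sigma,i)$ supplied by Part~1. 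This forces $\bar{Q}(t)$ to enter ${\cal V}(\epsilon,\sigma,i)$ within a finite, $\chi$- and $\epsilon$-dependent time. Once inside, Lemma~\ref{costcc}(2) gives $\sum_{j}\Lambda_{j}(\bar{Q}(t),i)=\sum_{j}\rho_{j}(i)$, so $d\bar{W}/dt=0$, the target $q^{*}(\bar{W}(t),\rho(i))$ freezes, and $\bar{Q}(t)$ is pulled in norm toward this fixed target.

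The main obstacle is gluing the within-interval convergence across the jumps of $\alpha$: at each jump time $\tau_{n}$ the target $q^{*}(\bar{W}(\tau_{n}),\rho(\alpha(\tau_{n})))$ can shift discontinuously, so that $\|\bar{Q}-q^{*}\|$ may jump up even while $\bar{Q}$ and $\bar{W}$ themselves are continuous. To control this I would combine (i) the pathwise containment $\|\bar{Q}(t)\|\leq g(\chi)$ from Part~1, which confines all dynamics to a single compact set; (ii) the continuity of $q^{*}(w,\rho(i))$ in $w$ from Lemma~\ref{costcc}(1) together with the finiteness of ${\cal K}$, which bound the magnitude of each target jump; and (iii) the a.s.\ finiteness of jumps of $\alpha$ on any bounded interval, noted just after \eq{markovjt}, which permits $T_{\chi,\epsilon}$ to be chosen pathwise. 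Quantifying $\delta(\epsilon,\sigma,i)$ and producing a uniform decay estimate that survives the successive jumps, by adapting the fluid-model arguments of~\cite{yeyao:heatra}, \cite{dai:poshar}, \cite{bhawil:peruse}, and \cite{bhawil:difapp} to the regime-switching setting, will be the subtlest point.
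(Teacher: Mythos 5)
Your Part~1 argument matches the paper's and is fine. The difficulty is in Part~2, where your outline stops exactly at the point where the real work begins. You correctly identify that the target $q^{*}(\bar{W}(t),\rho(\alpha(t)))$ jumps at each $\tau_{n}$ and that the within-interval drift inequality \eq{bsecond} alone does not survive the jumps, but the three ingredients you list do not close this gap: compact containment plus continuity of $q^{*}$ bound the size of each upward kick to the Lyapunov function, and a.s.\ finiteness of jumps on bounded intervals is irrelevant since there are infinitely many jumps on $[0,\infty)$, each of which can partially undo the accumulated decay. Nothing in your outline rules out the scenario in which the holding intervals where the drift acts are too short relative to the jump perturbations, so that $L(\bar{Q}(t),\alpha(t))=\psi(\bar{Q}(t),\alpha(t))-\psi(q^{*}(\bar{W}(t),\rho(\alpha(t))),\alpha(t))$ oscillates forever without converging to zero. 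The paper resolves this by (i) subtracting from $L$ the pure-jump step function $e(t)$ of \eq{sumjumps} to obtain a \emph{continuous}, bounded, non-increasing function $L_{1}$, which therefore converges; (ii) using the finiteness of ${\cal K}$ and the strong law of large numbers to extract a single state $i$ whose holding intervals $[\tau_{n_{l}},\tau_{n_{l+1}})$ have infinite total length \eq{sumtaunl}; and (iii) showing via \eq{fcompare} that if $L_{\infty}>0$ the cumulative drift $-\sigma\sum_{l}(\tau_{n_{l+1}}-\tau_{n_{l}})$ along those intervals would force $L<0$, a contradiction. Some device of this kind is indispensable, and your proposal does not supply one.

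Two smaller points. First, in Part~2 you assert that once $\bar{Q}(t)$ enters ${\cal V}(\epsilon,\sigma,i)$ the conclusion follows; but membership in ${\cal V}(\epsilon,\sigma,i)$ gives $\|\bar{Q}-q^{*}\|\leq\sigma$, not $<\epsilon$, and moreover nothing prevents an exit at the next jump of $\alpha$ — you still need $L\to 0$ globally and then the continuity and strict monotonicity of $\psi$ to translate this into \eq{barepsilon}. Second, your Part~3 invokes ``uniqueness of the fluid ODE,'' which is not available: $\Lambda(\cdot,i)$ is only known to be continuous in coordinates that stay positive (Lemma~\ref{firstl}), and mere continuity does not give uniqueness of solutions. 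The paper instead observes that $L(\bar{Q}(0),\alpha(0))=0$, $L\geq 0$, and $dL/dt\leq 0$ on $[\tau_{0},\tau_{1})$ force $L\equiv 0$ there, which together with the characterization $h(q,i)=0$ iff $q=q^{*}(w(q),\rho(i))$ and the resulting constancy of $\bar{W}$ yields $\bar{Q}(t)\equiv\bar{Q}(0)$; this is both simpler and actually justified by the machinery you have already set up.
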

\begin{proof} If $\|\bar{Q}(0)\|\leq\chi$, it follows from \eq{psiine}
that $\bar{Q}(t)$ is bounded for all $t\geq 0$ since
$C_{j}(q_{j},\rho_{j}(i))$ for each $j\in{\cal J}$ and $i\in{\cal
K}$ is strictly increasing and unbounded function in $q_{j}$.
Moreover, it follows from \eq{tildeg} that $g(\kappa)$ is increasing
in $\kappa$ with $g(0)=0$ and hence it follows from \eq{psiine} and
\eq{psicj} that
\begin{eqnarray}
&&C_{j}(\bar{Q}_{j}(t),\rho_{j}(\alpha(t)))\leq\tilde{g}(\chi)\;\;\;
\mbox{and}\;\;\;\bar{Q}_{j}(t)\leq
C_{j}^{-1}(\tilde{g}(\chi),\rho_{j}(\alpha(t)))\elabel{cjcjinv}
\end{eqnarray}
for each $t\geq 0$ and $j\in{\cal J}$, which implies that
\eq{barqleqg} is true and $\bar{W}(t)$ increases to some finite
number as $t$ increases due to \eq{limwtnon}-\eq{limwtnonI} and
\eq{barqleqg}, i.e.,
\begin{eqnarray}
&&\bar{W}(t)\uparrow\bar{W}(\infty)<\infty\;\;\;\mbox{as}\;\;\;
t\rightarrow\infty.\elabel{barwupa}
\end{eqnarray}
Thus we can define the following Lyapunov function with at most
countably many jumps,
\begin{eqnarray}
&&L(\bar{Q}(t),\alpha(t))\equiv\psi(\bar{Q}(t),\alpha(t))
-\psi(q^{*}(\bar{W}(t),\rho(\alpha(t))),\alpha(t)),
\elabel{lyapunovl}
\end{eqnarray}
which is nonnegative and bounded over $t\in[0,\infty)$ due to
Lemma~\ref{costcc}, \eq{limwtnon}, Lemma~\ref{fluidlemma},
\eq{barwupa} and the fact that $\bar{Q}(t)$ and $\rho(\alpha(t))$
are bounded over $[0,\infty)$. Then, for any given regular time
$t>0$ over an interval $(\tau_{n-1},\tau_{n})$ with
$n\in\{1,2,...\}$ and for any $\delta>0$, we can show that there
exists a $\sigma>0$ such that
\begin{eqnarray}
&&\frac{dL(\bar{Q}(t),\alpha(t))}{dt}\leq-\sigma\;\;\;\mbox{if}
\;\;\;\left\|\bar{Q}(t)-q^{*}(\bar{W}(t),\rho(\alpha(t)))\right\|
\geq\delta. \elabel{bsecond}
\end{eqnarray}

As a matter of fact, it follows from \eq{lyapunov} and \eq{barwupa}
that $\psi(\bar{Q}(t),\alpha(t))$ is non-increasing and
$\psi(q^{*}(\bar{W}(t),$ $\rho(\alpha(t))),\alpha(t))$ is
non-decreasing in $t\in(\tau_{n-1},\tau_{n})$ since $\alpha(t)$
keeps flat over the time interval $(\tau_{n-1},\tau_{n})$. So we
only need to show \eq{bsecond} true with respect to
$\psi(\bar{Q}(t),\alpha(t))$. By \eq{lyapunov}, we define
\begin{eqnarray}
\;\;\;\;\;\;h(\bar{Q}(t),\alpha(t))
&\equiv&\frac{d\psi(\bar{Q}(t),\alpha(t))}{dt}
\elabel{hdef}\\
&=&\sum_{j=1}^{J}\left(\rho_{j}(\alpha(t))
-\Lambda_{j}(\bar{Q}(t),\alpha(t))\right)\frac{\partial
U_{j}(\bar{Q}_{j}(t),\rho_{j}(\alpha(t)))}{\partial
\rho_{j}(\alpha(t))}I_{\{\bar{Q}_{j}(t)>0\}} \nonumber
\end{eqnarray}
which is continuous in terms of $\bar{Q}(t)=q\neq 0$ with $q\in
R^{J}_{+}$ due to \eq{ralloccon}, \eq{uconIV} and the second-order
differentiability of $U_{j}(q_{j},c_{j})$. Next, let
\begin{eqnarray}
&&{\cal C}(i)\equiv\{q\in
R^{J}_{+}:\left\|q-q^{*}(w(q),\rho(i))\right\|\geq\delta\}\subset\{q\in
R_{+}^{J}:q\neq 0\} \elabel{cqqst}
\end{eqnarray}
where the workload $w(q)$ corresponding to each $q\in R^{J}_{+}$ is
defined as in \eq{limwtnon} and the set ${\cal C}(i)$ is a closed
subset of $R^{J}_{+}$ due to the first part of Lemma~\ref{costcc}.
Moreover, similar to \eq{lyapunov}, we know that $h(q,i)\leq 0$ and
the equality is true if and only if $q=q^{*}(w(q),\rho(i))$.

In fact, supposing the {\em if part} is true with some $q\in{\cal
Q}(k_{1},...,k_{m})$ that is defined in \eq{zerozero}, then it
follows from \eq{hdef} and the last equality in \eq{lyapunov} that
$\{\rho_{l}(i),l\neq k_{1},...,k_{m}\}$ is the solution to the
corresponding optimization problem in \eq{srates} with
$\{q_{l},l\neq k_{1},...,k_{m}\}$ in the associated
$(J-m)$-dimensional utility function. Thus, it follows from
Lemma~\ref{equioptso} that $\{q_{l},l\neq
k_{1},...,k_{m}\}=\{q^{*}_{l}(w(q),\rho(i)),l\neq
k_{1},...,k_{m}\}$. Moreover, since $q^{*}(w(q),\rho(i))>0$ due to
$w(q)>0$ and Lemma~\ref{equioptso}, we know that
$\psi(\bar{Q}(t),\alpha(t))
-\psi(q^{*}(\bar{W}(t),\rho(\alpha(t))),\alpha(t))<0$ for
$\bar{Q}(t)=q$, which contradicts the fact that
$q^{*}(w(q),\rho(i))$ is the solution to the cost minimization
problem in \eq{costminp}. Conversely, the {\em only if} part is the
direct conclusion of the second part in Lemma~\ref{equioptso}.
Therefore we have that $h(q,i)<0$ over ${\cal C}(i)$. Since $h(q,i)$
is continuous in $q\neq 0$, we know that there exists a $\sigma>0$
such that
\begin{eqnarray}
h(q,i)\leq-\sigma\;\;\mbox{in}\;\;{\cal C}(i)\elabel{endp}
\end{eqnarray}
Moreover, since the state space of $\alpha(\cdot)$ is finite, we can
consider $\sigma$ as the common constant such that \eq{endp} is true
for all $i\in{\cal K}$. So the claim in \eq{bsecond} is proved.

Next, we prove that there exists a time $T_{\chi,\epsilon}>0$ for
any given $\epsilon>0$ such that \eq{barepsilon} is true. To do so,
we first show that
\begin{eqnarray}
L(\bar{Q}(t),\alpha(t))\rightarrow 0\;\;\mbox{as}\;\;
t\rightarrow\infty.\elabel{lqet}
\end{eqnarray}
As a matter of fact, define
\begin{eqnarray}
&&L_{1}(\bar{Q}(t),\alpha(t))\equiv L(\bar{Q}(t),\alpha(t))-e(t)
\elabel{lyapunovlI}
\end{eqnarray}
where $e(t)$ is a step function given by
\begin{eqnarray}
e(t)&\equiv&\sum_{n:\tau_{n}\leq t}\left(\psi(\bar{Q}(\tau_{n}),
\alpha(\tau_{n})))-\psi(\bar{Q}(\tau_{n}^{-}),\alpha(\tau_{n}^{-}))
\right)\nonumber\\
&&-\sum_{n:\tau_{n}\leq t}\left(\psi(q^{*}(\bar{W}(\tau_{n}),
\rho(\alpha(\tau_{n})),\alpha(\tau_{n}))-\psi(q^{*}(\bar{W}(\tau_{n}^{-}),
\rho(\alpha(\tau_{n}^{-})),\alpha(\tau_{n}^{-}))\right).
\elabel{sumjumps}
\end{eqnarray}
Therefore we can see that $L_{1}(\bar{Q}(t),\alpha(t))$ is
continuous and bounded over $t\in[0,\infty)$ since $\bar{Q}(t)$ and
$\rho(\alpha(t))$ are bounded. Thus we know that $e(t)$ is also
bounded over $t\in[0,\infty)$ due to the fact that
$L(\bar{Q}(t),\alpha(t))$ is bounded. Moreover, since
\begin{eqnarray}
&&\frac{dL_{1}(\bar{Q}(t),\alpha(t))}{dt}
=\frac{dL(\bar{Q}(t),\alpha(t))}{dt}\leq 0\;\;\;\mbox{for
a.a.}\;\;\; t\in[0,\infty), \elabel{derivleso}
\end{eqnarray}
we know that $L_{1}(\bar{Q}(t),\alpha(t))$ converges to some
constant as $t\rightarrow\infty$.

Now, since $e(t)$ is a step function and is bounded, any convergent
subsequence of $e(t)$ in terms of $t$ corresponds to a sequence of
holding time intervals as $t\rightarrow\infty$ such that the
convergence of $e(t)$ is true for all $t$ along the sequence of
holding time intervals. Moreover, since the state space of
$\alpha(\cdot)$ is finite, there exists at least one $i\in{\cal K}$
such that the holding time intervals corresponding to this
particular state $i$ appear infinitely many times. To be convenient,
we use $[\tau_{n_{l}},\tau_{n_{l+1}})$ with $l\in\{1,2,...,\}$ to
denote such a sequence of holding time intervals, where
$\tau_{n_{l}}$ is the jump time of $\alpha(\cdot)$ corresponding to
the index $n_{l}$. Notice that $[\tau_{n_{l}},\tau_{n_{l+1}})$ with
$l\in\{1,2,...,\}$ are sampled from a sequence of $i.i.d$ random
variables (actually exponentially distributed). Therefore, due to
the strong law of large number and without loss of generality, we
can assume that
\begin{eqnarray}
&&\sum_{l=1}^{\infty}\left(\tau_{n_{l+1}}-\tau_{n_{l}}\right)=
\infty.\elabel{sumtaunl}
\end{eqnarray}
Therefore, for an arbitrarily given convergent subsequence of
$e(t)$, we can obtain a sequence of holding time intervals
$[\tau_{n_{l}},\tau_{n_{l+1}})$ ($l\in\{1,2,...,\}$) with the
property \eq{sumtaunl} associated with a particular state $i\in{\cal
K}$. Then it follows from the convergence of
$L_{1}(\bar{Q}(t),\alpha(t))$ that
\begin{eqnarray}
&&L(\bar{Q}(t),\alpha(t))\rightarrow L_{\infty}\geq
0\;\;\mbox{as}\;\; t\rightarrow\infty\;\;\mbox{over}\;\;
t\in\cup_{l=1}^{\infty}[\tau_{n_{l}},\tau_{n_{l+1}}). \elabel{lbarq}
\end{eqnarray}
Furthermore, we can claim that $L_{\infty}=0$ by showing a
contradiction. In fact, if we assume $L_{\infty}>0$, then for any
given constant $\epsilon$ satisfying $0<\epsilon<L_{\infty}$, there
exists some large enough time $T_{1}>0$ such that
\begin{eqnarray}
&&L(\bar{Q}(t),\alpha(t))>L_{\infty}-\epsilon>0\elabel{plgreat}\;\;\;
\mbox{for all}\;\;\;t\in[T_{1},\infty)\cap
\left(\cup_{l=1}^{\infty}[\tau_{n_{l}},\tau_{n_{l+1}})\right).
\elabel{lowertb}
\end{eqnarray}
Since $\psi(q,i)$ is continuous and strictly increasing in $q\in
R_{+}^{J}$ for each $i\in{\cal K}$, it follows from \eq{lowertb}
that there exist some $\delta>0$ and $\sigma>0$ such that
\eq{bsecond} is true for all $t\in[T_{1},\infty)\cap
\left(\cup_{l=1}^{\infty}[\tau_{n_{l}},\tau_{n_{l+1}})\right)$. Thus
it follows from \eq{lyapunovlI} and \eq{derivleso}-\eq{sumtaunl}
that
\begin{eqnarray}
L(\bar{Q}(t),\alpha(t))&=&L(\bar{Q}(0),\alpha(0))+e(t)+\int_{0}^{t}
\frac{dL_{1}(\bar{Q}(t),\alpha(t))}{dt}dt
\elabel{fcompare}\\
&\leq&C-\sigma\sum_{l=1}^{N(t)-1}(\tau_{n_{l+1}}-\tau_{n_{l}})
\nonumber\\
&<&0\nonumber
\end{eqnarray}
for all sufficient large $t\in[T_{1},\infty)\cap
\left(\cup_{l=1}^{\infty}[\tau_{n_{l}},\tau_{n_{l+1}})\right)$,
where $N(t)=\max\{l:\tau_{n_{l}}\leq t\}$ and $C$ is a positive
constant since $e(t)$ is bounded. However, the derived result in
\eq{fcompare} contradicts the fact that $L(\bar{Q}(t),\alpha(t))\geq
0$. Therefore the assumption that $L_{\infty}>0$ is not true, which
implies that $L_{\infty}=0$. Since the convergent subsequence of
$\alpha(\cdot)$ is arbitrarily chosen, we know that the convergence
in \eq{lqet} is true (readers are also referred to \cite{dai:poshar}
for related discussion concerning a continuous Lyapunov function
with no jumps.) Hence it follows from \eq{lqet}, the continuity and
strict monotonicity of $\psi(q,i)$ in $q\in R_{+}^{J}$ for each
$i\in{\cal K}$ that there exists a time $T_{\chi,\epsilon}>0$ for
any given $\epsilon>0$ such that \eq{barepsilon} is true.

Finally, if $\bar{Q}(0)=q^{*}(\bar{W}(0),\rho(\alpha(0)))$, then it
follows from \eq{lyapunovl} that the claim that
$\bar{Q}(t)=\bar{Q}(0)$ a.s. for all $t\in[\tau_{0},\tau_{1})$ is
true. Hence we finish the proof of the lemma. $\Box$
\end{proof}

\subsection{A Key Lemma on Finer Time-Scaling}\label{slowerfs}

It follows from \eq{rsqueue}, \eq{wyte}, \eq{queuelength},
\eq{enee}-\eq{averagerate}, and the similar argument as for
\eq{centerhatQ} that
\begin{eqnarray}
&&\hat{W}^{r}(\cdot)=^{d}\hat{X}^{r}(\cdot)+\hat{Y}^{r}(\cdot)
\elabel{difws}
\end{eqnarray}
where, for each $t\geq 0$,
\begin{eqnarray}
&&\hat{Y}^{r}(t)=r\sum_{j=1}^{J}
\left(\int_{0}^{t}\rho_{j}(\alpha(s))ds-\bar{T}^{r}_{j}(t)\right)
\elabel{hatys}
\end{eqnarray}
which is non-decreasing in $t\geq 0$ due to \eq{expresslambda},
\eq{tjqalpha} and \eq{bmaxcap}, and
\begin{eqnarray}
\hat{X}^{r}(t)&=&\sum_{j=1}^{J}\frac{1}{\mu_{j}}
\left(\hat{E}^{r}_{j}(t)-\hat{S}^{r}_{j}(\bar{T}^{r}_{j}(t))\right)
+\sum_{j=1}^{J}r\int_{0}^{t} \left(\rho^{r}_{j}(\alpha(s))
-\rho_{j}(\alpha(s))\right)ds \elabel{hatxs}\\
&\Rightarrow&\hat{X}(t)\;\;\mbox{as}\;\;r\rightarrow\infty\nonumber
\end{eqnarray}
where
$\rho^{r}_{j}(\alpha(\cdot))=\lambda_{j}^{r}(\alpha(\cdot))/\mu_{j}$,
and the weak convergence in \eq{hatxs} is due to
Lemma~\ref{primconlm}, Lemma~\ref{fluidlemma} and the random change
of time lemma (see, e.g., page 151 of \cite{bil:conpro}) with
$\hat{X}(\cdot)$ given by \eq{hatx}. Since $\hat{X}(\cdot)$ is a
continuous process, it follows from the Skorohod  representation
theorem that the convergence in \eq{hatxs} can be assumed u.o.c. So,
in the rest of this subsection, we will only consider an arbitrarily
given sample path for which the above u.o.c. convergence holds.

Now, for a time $\tau\geq 0$, a constant $\delta>0$, a large enough
integer $r$, and a fixed time $T>0$ of certain magnitude to be
specified later, we divide the time interval $[\tau,\tau+\delta]$
into a total of $\lceil r\delta/T\rceil-1$ segments with equal
length $T/r$ except the last one, where $\lceil\cdot\rceil$ denotes
the integer ceiling. The $l$th segment with $l\in\{0,1,...,\lceil
r\delta/T\rceil-1\}$ covers the time interval
$[\tau+lT/r,(\tau+(l+1)T/r)\wedge T]$. Then, for any
$t\in[\tau,\tau+\delta]$, it can be expressed as
\begin{eqnarray}
&&t=\tau+(lT+u)/r\equiv\eta^{r,l}(u) \elabel{tequivexpres}
\end{eqnarray}
for some $l\in\{0,1,...,\lceil r\delta/T\rceil-1\}$ and $u\in[0,T]$.
Hence, due to the explanations in \eq{arjrt}-\eq{averagerate}, we
can define
\begin{eqnarray}
\bar{W}^{r,l}(u)&\equiv&\frac{1}{r}W^{r}((r^{2}\tau+rlT)+ru,
\alpha_{\eta^{r,l}(u)}(\cdot))\elabel{barwhatw}\\
&=&\frac{1}{r}W^{r}(r^{2}t,\alpha_{t}(\cdot))\nonumber\\
&=&\hat{W}^{r}(t)\nonumber
\end{eqnarray}
for each $l\in\{0,1,...,\lceil r\delta/T\rceil-1\}$ and $u\in[0,T]$.
In other words, for each time point, we will study the behavior of
$\hat{W}^{r}(t)$ through the fluid process, $\bar{W}^{r,l}(u)$, over
the time interval $[0,T]$ (see, e.g., \cite{yeyao:heatra} and
references therein). Similarly, we can define $\bar{Q}^{r,l}(u)$ and
$\bar{Y}^{r,l}(u)$ through $\hat{Q}^{r}(t)$ and $\hat{Y}^{r}(t)$.
Moreover, define $c_{1}$ and $c_{2}$ to be the following constants
\begin{eqnarray}
&&c_{1}=\max_{j\in{\cal J}}(1/\mu_{j})\;\;\mbox{and}\;\;
c_{2}=\left(\min_{j\in{\cal
J}}(1/\mu_{j})\right)^{-1}.\elabel{cicii}
\end{eqnarray}
Thus, for any given $w\geq 0$ and all $i\in{\cal K}$, we have
\begin{eqnarray}
&&w\leq
c_{1}\|q^{*}(w,\rho(i))\|\;\;\mbox{and}\;\;\|q^{*}(w,\rho(i))\| \leq
c_{2}w\elabel{ciciiine}
\end{eqnarray}
since
\begin{eqnarray}
&&\min_{j\in{\cal J}}(1/\mu_{j})\sum_{j=1}^{J}q^{*}_{j}(w,\rho(i))
\leq
w=\sum_{j=1}^{J}\frac{q^{*}_{j}(w,\rho(i))}{\mu_{j}}\leq\max_{j\in{\cal
J}}(1/\mu_{j})\sum_{j=1}^{J}q^{*}_{j}(w,\rho(i)).\nonumber
\end{eqnarray}
In addition, for any $\epsilon>0$, define
\begin{eqnarray}
&&T_{1}=\max\left\{T_{(c_{2}+1)\epsilon,\epsilon},\;
T_{\max\{b_{1,1},b_{1,2}\},\epsilon/2},\;
T_{\max\{b_{1,1},b_{1,2}\},\sigma/2}\right\} \elabel{tttone}
\end{eqnarray}
where $\sigma$ is determined in Lemma~\ref{costcc} and
\begin{eqnarray}
&&b_{1,1}=g((c_{2}+1)\epsilon)+\epsilon,\;\;
b_{2,1}=c_{1}b_{1,1}+\epsilon,\elabel{dparamI}\\
&&b_{2,2}=\max\{b_{2,1},\nu+\epsilon\}+C+\epsilon,\;
b_{1,2}=c_{2}b_{2,2}+\epsilon\elabel{dparamII}
\end{eqnarray}
where $g(\cdot)$ is defined in \eq{tildeg}. Then we have the
following lemma.
\begin{lemma}\label{uniattract}
Consider the time interval $[\tau,\tau+\delta]$ with $\tau\geq 0$
and $\delta>0$ and suppose that there is some constant $\nu\geq 0$
such that
\begin{eqnarray}
&&\lim_{r\rightarrow\infty}\hat{W}^{r}(\tau)=\nu\;\;\mbox{and}\;\;
\lim_{r\rightarrow\infty}\hat{Q}^{r}(\tau)=q^{*}(\nu,\rho(\alpha(\tau))).
\elabel{barwnu}
\end{eqnarray}
Moreover, let $C$ be an arbitrarily chosen positive constant such
that
\begin{eqnarray}
&&\sup_{t_{1},t_{2}\in[\tau,\tau+\delta]}
\|\hat{X}(t_{1})-\hat{X}(t_{2})\|\leq C
\;\;\mbox{with}\;\;\hat{X}(\cdot)\;\;\mbox{given by}\;\;\eq{hatx}.
\elabel{unidifb}
\end{eqnarray}
Then, for any given small enough number $\epsilon>0$ and a given
$T\geq T_{1}$, the following claims are true for all large enough
$r\in\{1,2,...\}$ and all $l\in\{0,1,...,\lceil
r\delta/T\rceil-1\}$:
\begin{eqnarray}
&&\|\bar{Q}^{r,l}(u)-q^{*}(\bar{W}^{r,l}(u),
\rho(\alpha(\eta^{r,l}(u))))\|\leq\epsilon\;\;\mbox{for all}\;\;
u\in[0,T],\elabel{barquI}\\
&&\bar{W}^{r,l}(u)\leq\nu+C+O(\epsilon)\;\;\mbox{for
all}\;\;u\in[0,T],\elabel{barquII}\\
&&\bar{Y}^{r,l}(u)-\bar{Y}^{r,l}(0)=0\;\;\mbox{for
all}\;\;u\in[0,T]\;\;\mbox{if}\;\;
\bar{W}^{r,l}(u)>\epsilon\;\;\mbox{for
all}\;\;u\in[0,T],\elabel{barquIII}
\end{eqnarray}
where $\lim_{\epsilon\rightarrow 0}O(\epsilon)=0$.
\end{lemma}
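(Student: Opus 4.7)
The plan is to prove the three assertions \eq{barquI}, \eq{barquII}, \eq{barquIII} by induction on the segment index $l\in\{0,1,\ldots,\lceil r\delta/T\rceil-1\}$. The central observation is that, for each fixed $l$, the rescaling \eq{tequivexpres}--\eq{barwhatw} converts the behaviour of $\hat{W}^{r}$ and $\hat{Q}^{r}$ on the real-time interval $[\tau+lT/r,\tau+(l+1)T/r]$ into a genuine fluid-scaled problem on $u\in[0,T]$, to which Lemma~\ref{fluidlemma} supplies fluid limits and Lemma~\ref{uniattrack} supplies Lyapunov attraction to the fixed point $q^{*}(\bar{W},\rho(\alpha))$. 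The constants $b_{1,1},b_{1,2},b_{2,1},b_{2,2}$ in \eq{dparamI}--\eq{dparamII} and the threshold $T_{1}$ in \eq{tttone} are chosen precisely so that, at the start of every segment, $(\bar{W}^{r,l}(0),\bar{Q}^{r,l}(0))$ lies in a region to which Lemma~\ref{costcc} and Lemma~\ref{uniattrack} apply with constants uniform in $l$ and $r$.

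For the base case $l=0$, hypothesis \eq{barwnu} gives $\bar{Q}^{r,0}(0)\to q^{*}(\nu,\rho(\alpha(\tau)))$ and $\bar{W}^{r,0}(0)\to\nu$, so the segment begins asymptotically at a fixed point. Extracting a converging subsequence via Lemma~\ref{fluidlemma}, the resulting fluid limit on $[0,T]$ satisfies \eq{limqtnon}--\eq{barLAMnon} with $\bar{Q}(0)=q^{*}(\nu,\rho(\alpha(\tau)))$. By the last clause of Lemma~\ref{uniattrack} the limit stays at the fixed point until the first jump of $\alpha$ inside the segment and, after each subsequent jump, returns within $\epsilon/2$ of the new fixed point in time at most $T_{\max\{b_{1,1},b_{1,2}\},\epsilon/2}\leq T_{1}\leq T$. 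Continuity of $q^{*}(\cdot,\rho(i))$ (part~1 of Lemma~\ref{costcc}) and the triangle inequality lift this to \eq{barquI} at the prelimit level for all large enough $r$. The workload bound \eq{barquII} follows from the Skorohod-type decomposition \eq{difws}, the hypothesis \eq{unidifb}, and the u.o.c.\ convergence $\hat{X}^{r}\to\hat{X}$ established after \eq{hatxs}. On any sub-interval where $\bar{W}^{r,0}(u)>\epsilon$, the $\epsilon$-closeness to the fixed point places $\bar{Q}^{r,0}(u)$ in the set ${\cal V}(\epsilon,\sigma,\alpha(\eta^{r,0}(u)))$ of \eq{qqwi}, so part~2 of Lemma~\ref{costcc} delivers full capacity utilization and \eq{barquIII}.

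For the inductive step $l\to l+1$, the identities $\bar{W}^{r,l+1}(0)=\bar{W}^{r,l}(T)$ and $\bar{Q}^{r,l+1}(0)=\bar{Q}^{r,l}(T)$ together with the inductive hypothesis yield $\bar{W}^{r,l+1}(0)\leq\nu+C+O(\epsilon)\leq b_{2,2}$ by \eq{dparamII}, and hence $\|\bar{Q}^{r,l+1}(0)\|\leq c_{2}b_{2,2}+\epsilon=b_{1,2}$ via \eq{ciciiine}, with $\bar{Q}^{r,l+1}(0)$ still within $\epsilon$ of the corresponding fixed point. Applying Lemma~\ref{fluidlemma} to segment $l+1$ produces a fluid limit bounded throughout $[0,T]$ by $g(\max\{b_{1,1},b_{1,2}\})$ via Lemma~\ref{uniattrack}; since $T\geq T_{\max\{b_{1,1},b_{1,2}\},\epsilon/2}$, the Lyapunov decay of Lemma~\ref{uniattrack} again forces attraction to the current fixed point within $\epsilon/2$ before the end of the segment, and the three claims for segment $l+1$ follow exactly as in the base case.

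The main obstacle I foresee is propagating the $O(\epsilon)$ error cleanly across all $\lceil r\delta/T\rceil$ segments without letting it accumulate, while simultaneously handling the jumps of $\alpha(\cdot)$ that may occur inside a single fine-scale segment: each jump resets the target fixed point and triggers a fresh attraction time, and the choices in \eq{tttone}--\eq{dparamII} must dominate these re-equilibration times uniformly over the finitely many post-jump environmental states. A second delicate point is the transfer of \eq{barquIII} from the fluid limit to the prelimit, where $\bar{Y}^{r,l}$ is only almost flat; part~2 of Lemma~\ref{costcc}, together with the monotonicity \eq{ynondecrease}, must be used to bound its prelimit increment by $o(1)$, which is then absorbed into the overall $O(\epsilon)$ budget via a standard three-epsilon argument.
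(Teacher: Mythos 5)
Your overall intuition is right, but the proposed induction on $l$ has a genuine gap that the paper's proof is specifically structured to avoid, and two of your supporting steps do not quite work as stated.

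First, the number of segments $\lceil r\delta/T\rceil$ grows with $r$, so a straightforward induction on $l$ does not close: the ``for all large enough $r$'' threshold that you invoke at each inductive step (to transfer fluid-limit statements to the prelimit, to guarantee $\alpha$ is flat on the scaled segment, etc.) may in principle depend on $l$, and $l$ ranges over a set that is itself unbounded as $r\to\infty$. You acknowledge this tension (``propagating the $O(\epsilon)$ error cleanly'') but do not resolve it. The paper sidesteps it entirely by arguing by contradiction: assume a subsequence ${\cal R}_{1}$ of $r$'s exists along which the claims fail at some $l$, pick the \emph{smallest} failing index $l_{r}$ for each $r\in{\cal R}_{1}$, extract a further subsequence along which $(l_{r}-1)/r$ converges (so the corresponding real-time points $\eta^{r,l_{r}-1}(u)$ converge to a single point $\eta_{\infty}$), and then apply Lemmas~\ref{fluidlemma} and~\ref{uniattrack} over the combined interval $[0,2T]$ to derive a contradiction. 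This subsequence device is what makes the ``inductive implication'' uniform in $l$, and it is not optional.

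Second, your inductive derivation of the workload bound \eq{barquII} is incomplete. Knowing $\bar{W}^{r,l+1}(0)\le\nu+C+O(\epsilon)$ does not by itself control $\bar{W}^{r,l+1}(u)$ over $u\in[0,T]$, since $\hat{Y}^{r}$ is nondecreasing and could in principle push $\hat{W}^{r}$ above this level. The paper controls this by splitting into ${\cal R}_{4}$ (some $u$ with $\bar{W}^{r,l_{r}}(u)\le\epsilon$) and ${\cal R}_{5}$ (all $u$ with $\bar{W}^{r,l_{r}}(u)>\epsilon$), and within ${\cal R}_{5}$ by further distinguishing whether \eq{equiclaimI} ever held for some $l<l_{r}$; if it did, it tracks the \emph{largest} such index $l_{r}^{m}$ and restarts the $\hat{X}$-fluctuation bound from that segment. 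This bookkeeping (Cases One and Two, yielding $b_{2,2}=\max\{b_{2,1},\nu+\epsilon\}+C+\epsilon$) is precisely what your sketch omits, and it is the reason the stronger auxiliary claims \eq{equiclaimI}--\eq{equiclaimII} are proved in place of \eq{barquII}--\eq{barquIII}.

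Third, your description of the $\alpha$-jumps is off. A fine-scale segment has real-time length $T/r\to 0$; since $\alpha(\cdot)$ has only finitely many jumps on $[\tau,\tau+\delta]$, for large $r$ the scaled state $\alpha(\eta^{r,l}(u))$ is \emph{constant} over $u\in[0,2T]$ for the indices $l$ relevant to the argument (after the subsequence extraction). There is no scenario of ``each subsequent jump triggers a fresh attraction time'' inside a segment. The paper uses this flatness essentially — it lets the fluid limit over $[0,2T]$ be governed by a single environmental state $i\in{\cal K}$, so that Lemma~\ref{uniattrack}'s attraction estimate applies to a fixed $q^{*}(\cdot,\rho(i))$.

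Finally, a smaller point: in your base case you cite ``the last clause of Lemma~\ref{uniattrack}'' to say the fluid limit stays at the fixed point, but you also need $T\ge T_{1}$ together with the explicit u.o.c.\ fluid convergence and the continuity claim of part~1 of Lemma~\ref{costcc} to pass back to the prelimit with the $\epsilon$ cushion of \eq{barquI}; the paper does this via \eq{baewbarqconujo}--\eq{fbarwbarq}, and it is worth making the three-epsilon decomposition explicit (the paper's \eq{difsubbq}--\eq{binpqw}) rather than leaving it implicit.
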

\begin{proof} For convenience, besides \eq{barquI}, we will prove the
following stronger claims instead of showing \eq{barquII} and
\eq{barquIII} directly, that is, for large enough $r\in\{1,2,...\}$
and all nonnegative integers $l\in\{0,1,...,\lceil
r\delta/T\rceil-1\}$,
\begin{eqnarray}
&&\mbox{if}\;\;\bar{W}^{r,l}(u)\leq\epsilon<C\;\;\mbox{for some}\;\;
u\in[0,T],
\elabel{equiclaimI}\\
&&\mbox{then}\;\;\bar{W}^{r,l}(u)\leq
b_{2,1},\;\left\|\bar{Q}^{r,l}(u)\right\|\leq
b_{1,1}\;\;\mbox{for all}\;\;u\in[0,T];\nonumber\\
&&\mbox{if}\;\;\bar{W}^{r,l}(u)>\epsilon\;\;\mbox{for
all}\;\;u\in[0,T],
\elabel{equiclaimII}\\
&&\mbox{then}\;\;\bar{W}^{r,l}(u)\leq b_{2,2},\;\bar{Q}^{r,l}(u)\leq
b_{1,2},\;\bar{Y}^{r,l}(u)-\bar{Y}^{r,l}(0)=0\;\;\mbox{for
all}\;\;u\in[0,T].\nonumber
\end{eqnarray}
Thus the remaining proof of the lemma can be divided into the
following two parts.

{\bf Part One:} We justify the claims stated in the lemma to be true
when $l=0$. As a matter of fact, it follows from \eq{barwhatw} and
\eq{barwnu} that
\begin{eqnarray}
&&(\bar{W}^{r,0}(0),\bar{Q}^{r,0}(0))=(\hat{W}^{r}(\tau),
\hat{Q}^{r}(\tau))\rightarrow(\nu,q^{*}(\nu,\rho(\alpha(\tau)))
\;\;\;\mbox{as}\;\;r\rightarrow\infty. \elabel{barwbarnhat}
\end{eqnarray}
Now, due to the definition of $\tau_{n}$ defined in \eq{markovjt},
we know that $[\tau,\tau+ T/r]\subset[\tau_{n-1},\tau_{n})$ with
some $n\in\{1,2,...\}$ for all large enough $r\in\{1,2,...\}$. Thus
$\alpha(\eta^{r,0}(u))$ keeps some constant $\alpha(\tau)$ for all
$u\in[0,T]$ when $r$ is large enough. So, as $r\rightarrow\infty$,
we have,
\begin{eqnarray}
&&\;\;\;(\bar{W}^{r,0}(u),\bar{Q}^{r,0}(u))\rightarrow(\bar{W}(u),
\bar{Q}(u))=(\nu,q^{*}(\nu,\rho(\alpha(\tau)))
\;\;\;\mbox{u.o.c.}\;\;\mbox{for all}\;\;u\in[0,T]
\elabel{baewbarqconujo}
\end{eqnarray}
where we have used Lemma~\ref{fluidlemma}, the uniqueness of the
limit, and \eq{barwhatw}.

Therefore it follows from the first part of Lemma~\ref{costcc},
\eq{baewbarqconujo}, and the similar argument as used in
\cite{yeyao:heatra} that, for all large enough $r\in\{1,2,...\}$ and
for all $u\in[0,T]$,
\begin{eqnarray}
&&\left\|\bar{Q}^{r,0}(u)-q^{*}(\bar{W}^{r,0}(u),
\rho(\alpha(\eta^{r,0}(u))))\right\|\leq\epsilon \elabel{fbarwbarq}
\end{eqnarray}
Thus \eq{barquI} presented in the lemma holds when $l=0$. Moreover,
it follows from \eq{baewbarqconujo} and \eq{unidifb} that the bound
estimations in \eq{equiclaimI} and \eq{equiclaimII} are true for all
$u\in[0,T]$ and all large enough $r$ when $l=0$. In addition, the
complementarity in \eq{equiclaimII} can be shown as follows. For the
given $\epsilon>0$ in the current lemma, it follows from the first
part of Lemma~\ref{costcc} and \eq{baewbarqconujo} that a $\sigma>0$
can be chosen such that, for large enough $r\in\{1,2,...\}$ and all
$u\in[0,T]$,
\begin{eqnarray}
&&\left\|\bar{Q}^{r,0}(u)-q^{*}(\bar{W}^{r,0}(u),
\rho(\alpha(\eta^{r,0}(u)))\right\|\leq\sigma \elabel{jothreeI}
\end{eqnarray}
since $\alpha(\eta^{r,0}(u))=\alpha(\tau)$ for all $u\in[0,T]$ when
$r$ is large enough. Thus, if $\bar{W}^{r,0}(u)>\epsilon$ for all
$u\in[0,T]$, then
\begin{eqnarray}
\bar{Y}^{r,0}(u)-\bar{Y}^{r,0}(0)
&=&\sum_{j=1}^{J}\int_{0}^{u}\left(\rho(\alpha(\eta^{r,0}(s)))
-\Lambda_{j}(\bar{Q}^{r}(\eta^{r,0}(s)),\alpha(\eta^{r,0}(s)))\right)ds
\elabel{reguyb}\\
&=&\sum_{j=1}^{J}\int_{0}^{u}\left(\rho(\alpha(\eta^{r,0}(s)))
-\Lambda_{j}(\bar{Q}^{r,0}(s),\alpha(\eta^{r,0}(s)))\right)ds
\nonumber\\
&=&0,\nonumber
\end{eqnarray}
for any $u\in[0,T]$, where the first equality of \eq{reguyb} follows
from \eq{hatys}, \eq{expresslambda} and the fact that
$\bar{Q}^{r,0}(s)\neq 0$ for all $s\in[0,u]\subset[0,T]$ due to the
assumption imposed in \eq{barquIII}, furthermore, the second
equality of \eq{reguyb} follows from \eq{homcon}, in addition, the
last equality of \eq{reguyb} follows from \eq{sumrhoi} in the second
part of Lemma~\ref{costcc}.

{\bf Part Two:} we prove the claims in the lemma for the case that
$l\in\{1,...,\lceil r\delta/T\rceil-1\}$ by showing a contradiction.
As a matter of fact, suppose that there is a subsequence ${\cal
R}_{1}$ of $r$ such that at least one of the claims stated in
\eq{barquI} and \eq{equiclaimI}-\eq{equiclaimII} does not hold for
any $r\in{\cal R}_{1}$ and some integer $l\in\{1,...,\lceil
r\delta/T\rceil-1\}$, where for later reference, we use
$l_{r}\in\{1,...,\lceil r\delta/T\rceil-1\}$ with $r\in{\cal R}_{1}$
to denote the smallest integer to have such property. However, we
can show that there is a subsequence ${\cal R}_{2}\subset{\cal
R}_{1}$ such that all the claims stated in \eq{barquI} and
\eq{equiclaimI}-\eq{equiclaimII} are true for $l=l_{r}$ and all
large enough $r\in{\cal R}_{2}$. To do so, we first construct a
subsequence ${\cal R}_{3}$ such that \eq{barquI} is true for
$l=l_{r}$ and all large enough $r\in{\cal R}_{3}$ as follows.

Due to the proof in the first part, we know that the claims stated
in \eq{barquI} and \eq{equiclaimI}-\eq{equiclaimII} are true for all
$l\in\{0,1,...,l_{r}-1\}$ and all large enough $r\in{\cal R}_{1}$.
So, for $l=l_{r}-1$, we have
\begin{eqnarray}
&&\left\|\bar{Q}^{r,l_{r}-1}(0)\right\|\leq\max\{b_{1,1},b_{1,2}\}
\;\;\mbox{for all}\;\;r\in{\cal R}_{1}. \elabel{lrcicii}
\end{eqnarray}
Then we know that $\{\bar{Q}^{r,l_{r}-1}(0),r\in{\cal R}_{1}\}$ has
a convergent subsequence from which we can find a further
subsequence ${\cal R}_{3}'\subset{\cal R}_{1}$ such that, along
$r\in{\cal R}_{3}'$,
\begin{eqnarray}
&&0\leq\frac{l_{r}-1}{r}\downarrow l_{\infty}\equiv\inf_{h\in{\cal
R}_{3}'}\left(\frac{l_{h}-1}{h}\right)<\infty \elabel{downconlr}
\end{eqnarray}
since $0\leq(l_{r}-1)/r\leq\delta/T$. Then it follows from
\eq{tequivexpres} and \eq{downconlr} that, for $r\in{\cal R}'_{3}$
and $u\in[0,2T]$,
\begin{eqnarray}
&&\eta^{r,l_{r}-1}(u)\downarrow\tau+l_{\infty}T\equiv\eta_{\infty}
\;\;\mbox{as}\;\;r\rightarrow\infty.\elabel{subetac}
\end{eqnarray}
Thus it follows from the definition of $\tau_{n}$ defined in
\eq{markovjt}, we know that
$[\eta_{\infty},\eta^{r,l_{r}-1}(u)]\subset[\tau_{n-1},\tau_{n})$
with some $n\in\{1,2,...\}$ for all $u\in[0,2T]$ and large enough
$r\in{\cal R}'_{3}$. Moreover, due to Lemma~\ref{fluidlemma}, there
is a subsequence ${\cal R}_{3}\subset{\cal R}_{3}'$ such that
\begin{eqnarray}
&&(\bar{W}^{r,l_{r}-1}(u),\bar{Q}^{r,l_{r}-1}(u))
\rightarrow(\bar{W}(u),\bar{Q}(u))\;\;\mbox{with}\;\;
\left\|\bar{Q}(0)\right\|\leq\max\{b_{1,1},b_{1,2}\} \elabel{subfbw}
\end{eqnarray}
u.o.c. over $u\in[0,2T]$ along ${\cal R}_{3}$. Hence
\begin{eqnarray}
&&\left\|\bar{Q}^{r,l_{r}-1}(u)-q^{*}(\bar{W}^{r,l_{r}-1}(u),
\rho(\alpha(\eta^{r,l_{r}-1}(u))))\right\|\elabel{difsubbq}\\
&\leq&\frac{\epsilon}{3}+\left\|\bar{Q}(u)-q^{*}(\bar{W}(u),
\rho(\alpha(\eta^{r,l_{r}-1}(u))))\right\|+\frac{\epsilon}{3}
\nonumber
\end{eqnarray}
holds over $u\in[0,2T]$ when $r\in{\cal R}_{3}$ is large enough,
where we have used \eq{subfbw} and the first part of
Lemma~\ref{costcc} for \eq{difsubbq}. Then, by \eq{barepsilon} in
Lemma~\ref{fluidlemma} and \eq{subetac}, we know that, for all
$u\in[T,2T]$ and large enough $r\in{\cal R}_{3}$,
\begin{eqnarray}
&&\left\|\bar{Q}(u)-q^{*}(\bar{W}(u),
\rho(\alpha(\eta^{r,l_{r}-1}(u))))\right\|<\frac{\epsilon}{3}
\elabel{binpqw}
\end{eqnarray}
since $\alpha(\eta^{r,l_{r}-1}(u))$ keeps a constant $i\in{\cal K}$
for all $u\in[0,2T]$ and large enough $r\in{\cal R}_{3}$, and
moreover, since
\begin{eqnarray}
&&T\geq T_{1}\geq T_{\max\{b_{1,1},b_{1,2}\},\epsilon/2}\nonumber
\end{eqnarray}
where $T_{1}$ is defined in \eq{tttone}. So, for large enough
$r\in{\cal R}_{3}$ and $u\in[0,T]$, it follows from \eq{difsubbq}
and \eq{binpqw} that
\begin{eqnarray}
&&\left\|\bar{Q}^{r,l_{r}}(u)-q^{*}(\bar{W}^{r,l_{r}}(u),
\rho(\alpha(\eta^{r,l_{r}}(u))))\right\|
\elabel{lastbqbw}\\
&=&\left\|\bar{Q}^{r,l_{r}-1}(T+u)
-q^{*}(\bar{W}^{r,l_{r}-1}(T+u),\rho(\alpha(\eta^{r,l_{r}-1}(T+u))))
\right\|\nonumber\\
&<&\epsilon\nonumber
\end{eqnarray}
where we have used \eq{difsubbq} for the inequality in
\eq{lastbqbw}. Then we know that the claim in \eq{barquI} is true
with $l=l_{r}$ for large enough $r\in{\cal R}_{3}$.

Next we divide ${\cal R}_{3}$ into the union of the following two
sets, that is, ${\cal R}_{3}={\cal R}_{4}\cup{\cal R}_{5}$, where
\begin{eqnarray}
&&{\cal R}_{4}\equiv\left\{r\in{\cal
R}_{3}:\bar{W}^{r,l_{r}}(u)\leq\epsilon\;\; \mbox{for
some}\;\;u\in[0,T]\right\},
\elabel{divsetI}\\
&&{\cal R}_{5}\equiv\left\{r\in{\cal
R}_{3}:\bar{W}^{r,l_{r}}(u)>\epsilon\;\;\mbox{for
all}\;\;u\in[0,T]\right\}.\elabel{dissetII}
\end{eqnarray}
Here we remark that at least one of ${\cal R}_{4}$ and ${\cal
R}_{5}$ must contain infinite numbers. So the remaining proof can be
divided into the following two parts.

Firstly, if ${\cal R}_{4}$ is infinite, then there is a fixed
$u_{r}\in[0,T]$ for each $r\in{\cal R}_{4}$ such that
\begin{eqnarray}
&&\bar{W}^{r,l_{r}}(u_{r})\leq\epsilon.\elabel{fbarle}
\end{eqnarray}
Moreover, there is a subset ${\cal R}_{4}'\subset{\cal R}_{4}$ such
that $u_{r}\rightarrow u'$ as $r\rightarrow\infty$ for $r\in{\cal
R}_{4}'$ and some $u'\in[0,T]$. Therefore we have
\begin{eqnarray}
&&\bar{W}(0)\leq\bar{W}(u')=\lim_{r\rightarrow\infty}
\bar{W}^{r,l_{r}}(u_{r})\leq\epsilon\elabel{ininreset}
\end{eqnarray}
where the first inequality in \eq{ininreset} follows from the
increasing property of $\bar{W}(\cdot)$, the equality in
\eq{ininreset} follows from \eq{subfbw} since
$\bar{W}^{r,l_{r}}(u_{r})=\bar{W}^{r,l_{r}-1}(T+u_{r})$, and the
second inequality in \eq{ininreset} follows from \eq{fbarle}. Thus
we have
\begin{eqnarray}
&&\left\|\bar{Q}(0)-q^{*}(\bar{W}(0),
\rho(\alpha(\eta_{\infty})))\right\|<\epsilon \elabel{estmnw}
\end{eqnarray}
where $\eta_{\infty}$ is defined in \eq{subetac} and the inequality
in \eq{estmnw} follows from \eq{subfbw}, the first part of Lemma
\ref{costcc}, and the fact that \eq{barquI} is true with $l=l_{r}$
for all large enough $r\in{\cal R}_{4}'\subset{\cal R}_{3}$ as
discussed above. Therefore it follows from \eq{estmnw},
\eq{ciciiine} and \eq{ininreset} that
\begin{eqnarray}
&&\left\|\bar{Q}(0)\right\|\leq\left\|q^{*}(\bar{W}(0),
\rho(\alpha(\eta_{\infty})))\right\|+\epsilon\leq(c_{2}+1)\epsilon.
\elabel{reesbnd}
\end{eqnarray}
Then, for all large enough $r\in{\cal R}_{4}'$ and all $u\in[0,T]$,
we have
\begin{eqnarray}
&&\left\|\bar{Q}^{r,l_{r}}(u)\right\|
\leq\left\|\bar{Q}(u)\right\|+\epsilon\leq b_{1,1}\elabel{iqcomp}
\end{eqnarray}
where $b_{1,1}$ is defined in \eq{dparamI} and the two inequalities
in \eq{iqcomp} follow from \eq{subfbw}, the similar argument as in
\eq{lastbqbw} and Lemma~\ref{uniattrack} respectively. Similarly,
for large enough $r\in{\cal R}_{4}'$ and all $u\in[0,T]$, we have,
\begin{eqnarray}
&&\bar{W}^{r,l_{r}}(u)\leq\bar{W}(u)+\epsilon\leq
c_{1}\left\|q^{*}(\bar{W}(u),\rho(\alpha(\eta_{\infty})))\right\|
+\epsilon\leq b_{2,1},\elabel{reswq}
\end{eqnarray}
where the two inequalities in \eq{reswq} follows from \eq{subfbw}
and the similar argument as in \eq{lastbqbw}. Then it follows from
\eq{iqcomp}-\eq{reswq} that \eq{equiclaimI} is true for $l=l_{r}$
for large enough $r\in{\cal R}_{4}$.

Secondly, if ${\cal R}_{5}$ is infinite, we can choose
$\sigma=\sigma(\epsilon)$ as in Lemma~\ref{costcc}. Then, it follows
from Lemma~\ref{uniattrack} that, for all $u\in[0,T]$,
\begin{eqnarray}
&&\left\|\bar{Q}(T+u)-q^{*}(\bar{W}(T+u),\rho(\alpha(\eta_{\infty})))
\right\|\leq\frac{\sigma}{2}\elabel{fiveinf}
\end{eqnarray}
where $\alpha(\eta_{\infty})=\alpha(\eta^{r,l_{r}-1}(T+u))$ keeps a
constant $i\in{\cal K}$ for all $u\in[0,2T]$ and all large enough
$r\in{\cal R}_{5}$ and moreover, the chosen time $T$ satisfies
\begin{eqnarray}
&&T\geq T_{1}\geq T_{\max\{b_{1,1},b_{1,2}\},\sigma/2}\nonumber
\end{eqnarray}
with $T_{1}$ defined in \eq{tttone}. Thus, for all large enough
$r\in{\cal R}_{5}$ and all $u\in[0,T]$, we have
\begin{eqnarray}
&&\left\|\bar{Q}^{r,l_{r}-1}(T+u)-q^{*}(\bar{W}^{r,l_{r}-1}(T+u),
\rho(\alpha(\eta^{r,l_{r}-1}(T+u))))\right\|<\sigma \elabel{fiveine}
\end{eqnarray}
where the inequality follows from the similar explanations as used
for \eq{estmnw}. Therefore, by \eq{fiveine}, \eq{sumrhoi} in the
second part of Lemma~\ref{costcc}, and the fact that
\begin{eqnarray}
&&\bar{W}^{r,l_{r}-1}(T+u)=\bar{W}^{r,l_{r}}(u)>\epsilon\;\;\mbox{for
all}\;\;u\in[0,T], \nonumber
\end{eqnarray}
we know that $\bar{Y}^{r,l_{r}-1}(T+u)$ does not increase over
$u\in[0,T]$ for all large enough $r\in{\cal R}_{5}$, i.e.,
\begin{eqnarray}
&&\bar{Y}^{r,l_{r}}(u)-\bar{Y}^{r,l_{r}}(0)=0\;\;\mbox{for
all}\;\;u\in[0,T]. \elabel{lastbary}
\end{eqnarray}
To finish the remaining proof based on \eq{lastbary}, we need to
consider the following two mutually exclusive cases for a given
large enough $r\in{\cal R}_{5}$.

Case One: the condition in \eq{equiclaimII} is true for all
$l\in\{0,1,...,l_{r}\}$. Then we know that $\bar{Y}^{r,l_{r}}(u)$
does not increase over $u\in[0,T]$ for all $l\in\{0,1,...,l_{r}\}$
due to the induction assumption and \eq{lastbary}. So, for large
enough $r\in{\cal R}_{5}$ and all $u\in[0,T]$, we have,
\begin{eqnarray}
\bar{W}^{r,l_{r}}(u)&=&\bar{W}^{r,0}(0)
+\sum_{l=0}^{l_{r}-1}\left(\bar{W}^{r,l}(T)-\bar{W}^{r,l}(0)\right)
+\left(\bar{W}^{r,l_{r}}(u)-\bar{W}^{r,l_{r}}(0)\right)
\elabel{fcaseI}\\
&=&\hat{W}^{r}(\tau)+\left(\hat{X}^{r}(\eta^{r,l_{r}}(u))
-\hat{X}^{r}(\eta^{r,0}(0))\right)\nonumber\\
&\leq&(\nu+\epsilon)+(C+\epsilon) \nonumber
\end{eqnarray}
where the second equality in \eq{fcaseI} follows from
\eq{difws}-\eq{hatys}, and the inequality in \eq{fcaseI} follows
from \eq{barwnu}, \eq{subetac}, \eq{hatxs} and \eq{unidifb}.

Case Two: the condition in \eq{equiclaimI} is true for some
$l\in\{0,1,...,l_{r}-1\}$ and use $l^{m}_{r}$ to denote the largest
such integer. Then both the condition and the claim in
\eq{equiclaimII} are true for all $l\in\{l_{r}^{m}+1,...,l_{r}\}$
and therefore the corresponding $\bar{Y}^{r,l}(u)$ does not increase
over $u\in[0,T]$ due to the induction assumption and the discussion
as in \eq{fiveine}-\eq{lastbary}. Moreover, by the same discussion
as used in \eq{subetac}, there is a subsequence ${\cal
R}_{5}'\subset{\cal R}_{5}$ such that $\eta^{r,l_{r}^{m}}(T)$
converges along $r\in {\cal R}_{5}'$. Thus, similar to \eq{fcaseI},
for large enough $r\in{\cal R}_{5}'$ and all $u\in[0,T]$, we have
\begin{eqnarray}
\bar{W}^{r,l_{r}}(u)
&=&\bar{W}^{r,l_{r}^{m}}(T)+\left(\hat{X}^{r}(\eta^{r,l_{r}}(u))
-\hat{X}^{r}(\eta^{r,l_{r}^{m}}(T))\right)\elabel{fcaseII}\\
&\leq&b_{2,1}+(C+\epsilon) \nonumber
\end{eqnarray}
where the inequality in \eq{fcaseII} follows from \eq{subfbw}, the
induction assumption (since $l_{r}^{m}<l_{r}$), \eq{hatxs} and
\eq{unidifb}.

Therefore, it follows from both of the discussions in Case One and
Case Two that, for large enough $r\in{\cal R}_{5}'$ and all
$u\in[0,T]$, we have,
\begin{eqnarray}
\bar{W}^{r,l_{r}}(u)&\leq&\max\{(\nu+\epsilon)+(C+\epsilon),
b_{2,1}+(C+\epsilon)\}\elabel{lfinalI}\\
&=&b_{2,2},\nonumber\\
\left\|\bar{Q}^{r,l_{r}}(u)\right\|&\leq&
\left\|q^{*}(\bar{W}^{r,l_{r}}(u),\rho(\alpha(\eta^{r,l_{r}}(u))))
\right\|+\epsilon\elabel{lfinalII}\\
&\leq&c_{2}\bar{W}^{r,l_{r}}(u)+\epsilon\nonumber\\
&\leq&b_{1,2},\nonumber
\end{eqnarray}
where the first inequality in \eq{lfinalII} follows from
\eq{lastbqbw}, the second inequality in \eq{lfinalII} follows from
\eq{ciciiine}, and the third inequality in \eq{lfinalII} follows
from \eq{lfinalI}. Thus, by \eq{lastbary} and
\eq{lfinalI}-\eq{lfinalII}, we know that \eq{equiclaimII} is true
with $l=l_{r}$ for large enough $r\in{\cal R}_{5}'$.

In the end, take ${\cal R}_{2}={\cal R}_{4}'\cup{\cal R}_{5}'$ and
then we have that \eq{barquI} and \eq{equiclaimI}-\eq{equiclaimII}
are true in terms of $l=l_{r}$ for large enough $r\in{\cal
R}_{2}\subset{\cal R}_{1}$. This is a contradiction and hence we
finish the proof of Lemma~\ref{uniattract}. $\Box$
\end{proof}

\subsection{\bf Proof of Theorem~\ref{rsdifth}}\label{mainthp}

As in the proof of Lemma~\ref{uniattract}, our discussion will base
on each particular sample path. For convenience, we divide the proof
into two parts.

{\bf Part One.} In this part, we prove the convergence in
distribution as stated in \eq{qwyweakc} and the related properties
\eq{mainmodel}-\eq{qqast}. First of all, since it may be not true
that any subsequence of $\{\hat{Y}^{r}(t),r\in\{1,2,...,\}\}$ exists
a further subsequence that converges to a continuous and
nondecreasing limit $\hat{Y}(t)$ when $\hat{Y}^{r}(t)$ are unbounded
(e.g., $\hat{Y}^{r}(t)=(logr)t$). So, we employ
Lemmas~\ref{fluidlemma}-\ref{uniattract} to provide a justification
in terms of u.o.c. convergence for
$\{\hat{Y}^{r}(t),r\in\{1,2,...,\}\}$, which can be considered as an
supplementary illustration to the corresponding claims used in
\cite{yeyao:heatra}, \cite{sto:maxsch}, and etc. As a matter of
fact, since $Q^{r}(0)=0$ for all $r\in\{1,2,...\}$, we can conclude
that the conditions stated in \eq{barwnu} of Lemma~\ref{uniattract}
are satisfied. Moreover, due to \eq{hatxs}, we know that
\eq{unidifb} is true for an arbitrarily chosen constant $C>0$ over
any given interval $[0,T]\supset[0,T_{1}]$, where $T_{1}$ is defined
in \eq{tttone}. So, by \eq{difws}, \eq{barwhatw},
Lemmas~\ref{fluidlemma}-\ref{uniattract}, we know that, for any
$t\in[0,T]$ and each large enough $r\in\{1,2,...\}$, there is a
$u\in[0,T]$ and $l\in\{0,1,...,\lceil r\delta/T\rceil-1\}$ such
that, for any given small enough $\epsilon>0$,
\begin{eqnarray}
&&0\leq\hat{Y}^{r}(t)=\bar{W}^{r,l}(u)-\hat{X}^{r}(t)\leq
C+O(\epsilon)+K,\elabel{hatynuib}
\end{eqnarray}
where $K$ is some positive constant due to \eq{hatxs} and the
continuity of $\hat{X}(t)$. Therefore we know that $\hat{Y}^{r}(t)$
is uniformly bounded over the given interval $[0,T]$ for all
$r\in\{1,2,...\}$. Moreover, since $\hat{Y}^{r}(t)$ for each
$r\in\{1,2,...\}$ is nondecreasing and continuous with
$\hat{Y}^{r}(0)=0$, it follows from the Helly's Theorem (e.g.,
Theorem 2 in page 319 of \cite{shi:pro}) that, for any subsequence
of these processes, there is a further subsequence ${\cal
R}\subset\{1,2,...\}$ such that
\begin{eqnarray}
&&\hat{Y}^{r}(t)\rightarrow\hat{Y}(t)\;\;\mbox{for
every}\;\;t\in[0,T]\;\;\mbox{along}\;\;r\in{\cal R} \elabel{shatyc}
\end{eqnarray}
where $\hat{Y}(t)$ is also nondecreasing and continuous with
$\hat{Y}(t)=0$ over $[0,T]$.

Next, take $T\in\{1,2,...\}$ and let $T\rightarrow\infty$ since
$T\geq T_{1}$ is arbitrarily taken, we know that there is a further
subsequence ${\cal R}_{1}\subset{\cal R}$ such that the convergence
in \eq{shatyc} is extended to the whole interval $[0,\infty)$ along
$r\in{\cal R}_{1}$, and $\hat{Y}(t)$ is nondecreasing and continuous
over $[0,\infty)$. Thus, it follows from Theorem 2.15 in page 342,
Corollary 2.24 in page 345, and Proposition 1.17(b) of
\cite{jacshi:limthe} that the convergence in \eq{shatyc} is u.o.c.
over $[0,\infty)$. Consequently, it follows from \eq{difws} and
\eq{hatxs} that, along $r\in{\cal R}_{1}$,
\begin{eqnarray}
&&\hat{W}^{r}(t)\rightarrow\hat{W}(t)=\hat{X}(t)+\hat{Y}(t)\geq
0\;\;\mbox{u.o.c. over}\;\;t\in[0,\infty),\elabel{hatsubw}
\end{eqnarray}
which is continuous in $t\in[0,\infty)$.

Thus it follows from \eq{shatyc}-\eq{hatsubw},
Lemma~\ref{fluidlemma}, and the similar argument as used in
\cite{yeyao:heatra}, we know that the complementary property as
stated in Theorem~\ref{rsdifth} is true. Furthermore, take a number
$\delta>0$, then for a given $\epsilon>0$, it follows from
\eq{barquI} in Lemma~\ref{uniattract} that, for large enough
$r\in{\cal R}_{1}$,
\begin{eqnarray}
&&\sup_{t\in[0,\delta]}\left\|\hat{Q}^{r}(t)-q^{*}(\hat{W}^{r}(t),
\rho(\alpha(t)))\right\|\leq\epsilon, \elabel{difqqw}
\end{eqnarray}
or equivalently, for each $j\in\{1,2\}$, $t\in[0,\delta]$ and large
enough $r\in{\cal R}_{1}$, we have
\begin{eqnarray}
&&q^{*}_{j}(\hat{W}^{r}(t), \rho(\alpha(t)))-\epsilon\leq
\hat{Q}^{r}_{j}(t)\leq q^{*}_{j}(\hat{W}^{r}(t),
\rho(\alpha(t)))+\epsilon.\elabel{difqqwI}
\end{eqnarray}
Then it follows from Lemma~\ref{costcc} and \eq{hatsubw} that the
following convergence is true (e.g., let $r\rightarrow\infty$ first
and let $\epsilon\rightarrow 0$ later in \eq{difqqwI}),
\begin{eqnarray}
&&\hat{Q}^{r}(t)\rightarrow\hat{Q}(t)\equiv
q^{*}(\hat{W}(t),\rho(\alpha(t)))\;\;\mbox{uniformly
over}\;\;t\in[0,\delta]. \elabel{lastqcon}
\end{eqnarray}
Since $\delta$ is arbitrarily taken, the convergence stated in
\eq{lastqcon} can be considered true u.o.c. over $[0,\infty)$.
Therefore, we have
\begin{eqnarray}
&&(\hat{Q}^{r}(t),\hat{W}^{r}(t),\hat{Y}^{r}(t))
\rightarrow(\hat{Q}(t),\hat{W}(t),\hat{Y}(t))\;\;\mbox{u.o.c.
over}\;\;[0,\infty)\;\;\mbox{along}\;\;r\in{\cal R}_{1}
\elabel{jointwxycon}
\end{eqnarray}
with the limit satisfying all the requirements as stated in
Theorem~\ref{rsdifth}. Consequently, due to the uniqueness of
solution to the associated Skorohod problem (see, e.g.,
\cite{cheyao:funque}, or \cite{dai:broapp} and
\cite{daidai:heatra}), we know that the convergence in
\eq{jointwxycon} is true along $r\in\{1,2,...\}$.

\vskip 0.1cm {\bf Part Two.} In this part, we prove the optimality
claims stated in \eq{optimaleqn}-\eq{optimaleqnI} along the line of
\cite{yeyao:heatra}, however, the justification logic and technical
treatment are somewhat different. First of all, suppose that all the
processes related to an arbitrarily given feasible allocation scheme
$G$ will be superscripted by an additional $G$. Moreover, for each
$t\in[0,\infty)$, we define
\begin{eqnarray}
&&\hat{W}^{G}(t)\equiv\liminf_{r\rightarrow\infty}\hat{W}^{r,G}(t)
\elabel{infhatw}
\end{eqnarray}
which may be infinitely-valued. In other words, for any particularly
given $t\in[0,\infty)$, there exists a subsequence ${\cal
T}\subset\{1,2,...\}$ such that
\begin{eqnarray}
&&\hat{W}^{G}(t)=\lim_{r\rightarrow\infty}\hat{W}^{r,G}(t)\;\;
\mbox{along}\;\;r\in{\cal T}. \elabel{hatlimitw}
\end{eqnarray}
Moreover, let ${\cal Q}$ denote the set of all the nonnegative
rational numbers. Thus there exists a subsequence ${\cal R}\in{\cal
T}$ such that
\begin{eqnarray}
&&\hat{W}^{r,G}(s)\rightarrow\hat{W}^{G}(s)\;\;\mbox{along}\;\;
r\in{\cal R}\;\;\mbox{for each}\;\;s\in{\cal Q}.
\elabel{rationalcon}
\end{eqnarray}
In addition, by applying the similar discussion as in
Lemma~\ref{fluidlemma}, we can select a subsequence ${\cal
R}_{1}\subset{\cal R}$ such that, along $r\in{\cal R}_{1}$,
\begin{eqnarray}
&&\bar{T}^{r,G}(s)\rightarrow\bar{T}^{G}(s)\;\;\mbox{u.o.c.
over}\;\;s\in[0,\infty)\;\;\mbox{as}\;\;r\rightarrow\infty
\elabel{finalfluidI}
\end{eqnarray}
where $\bar{T}^{G}(s)$ is Lipschitz continuous and increasing with
$\bar{T}^{G}(0)=0$. Furthermore, we can see that $\bar{Q}^{r,G}(s)$,
$\bar{W}^{r,G}(s)$ and $\bar{Y}^{r,G}(s)$ also converge u.o.c. to
$\bar{Q}^{G}(s)$, $\bar{W}^{G}(s)$ and $\bar{Y}^{G}(s)$ along
$r\in{\cal R}_{1}$, which are Lipschitz continuous and satisfy the
following relationships
\begin{eqnarray}
&&\bar{Q}^{G}_{j}(s)=\bar{\lambda}_{j}(s)-\mu_{j}
\bar{T}^{G}_{j}(s)\geq 0\;\;\mbox{for each}\;\;j\in{\cal J},
\elabel{flimqtnon}\\
&&\bar{W}^{G}(s)=\sum_{j=1}^{J}\frac{\bar{Q}^{G}_{j}(s)}{\mu_{j}}
=\bar{Y}^{G}(s),\elabel{flimwtnon}\\
&&\bar{Y}^{G}(s)=\sum_{j=1}^{J}\left(\int_{0}^{s}
\rho_{j}(\alpha(u))du-\bar{T}^{G}_{j}(s)\right), \elabel{flimwtnonI}
\end{eqnarray}
where $\bar{Y}^{G}(s)$ is nondecreasing with $\bar{Y}^{G}(0)=0$. To
further investigate, we define
\begin{eqnarray}
&&\zeta=\inf\left\{s\geq
0:\bar{T}^{G}_{j}(s)\neq\bar{c}_{j}(s)\;\;\mbox{for
some}\;\;j\in{\cal J}\right\}\elabel{ftaui}
\end{eqnarray}
where $\bar{c}_{j}(s)$ is defined in \eq{barvc}. Then, under the
policy $G$, it follows from the similar discussion as in \eq{hatxs}
that
\begin{eqnarray}
&&\hat{X}^{r,G}(s)\rightarrow\hat{X}^{G}(s)\;\;\mbox{u.o.c.
over}\;\;s\in[0,\zeta)\;\;\mbox{along}\;\;r\in{\cal R}_{1}.
\elabel{hatxg}
\end{eqnarray}
So it follows from \eq{rationalcon} that
\begin{eqnarray}
&&\hat{Y}^{r,G}(s)\rightarrow\hat{\gamma}^{G}(s)\;\;
\mbox{along}\;\;r\in{\cal R}_{1}\;\;\mbox{for each}\;\;s\in{\cal
Q}\elabel{rhatycon}
\end{eqnarray}
where $\hat{\gamma}^{G}(s)$ is some discrete function in $s\in{\cal
Q}$ and is nondecreasing since $\hat{Y}^{r,G}(s)$ is nondecreasing
for each $r\in{\cal R}_{1}$. Moreover, define
\begin{eqnarray}
&&\zeta_{1}=\inf\left\{s\geq 0:\hat{\gamma}^{G}(s)=
+\infty,s\in{\cal Q}\right\},\elabel{zetaone}
\end{eqnarray}
then we know that $\{\hat{Y}^{r,G}(s),r\in{\cal R}_{1}\}$ is
uniformly bounded over any compact set of
$[0,\zeta\wedge\zeta_{1})$. Thus it follows from the similar
explanation as used for \eq{shatyc} that there is a subsequence
${\cal R}_{2}\subset{\cal R}_{1}$ such that
\begin{eqnarray}
&&\hat{Y}^{r,G}(s)\rightarrow\hat{Y}^{G}(s)\;\;\mbox{for
each}\;\;s\in[0,\zeta\wedge\zeta_{1})\;\;\mbox{along}\;\; r\in{\cal
R}_{2} \elabel{intuoc}
\end{eqnarray}
where $\hat{Y}^{G}(s)$ is continuous and nondecreasing with
$\hat{Y}^{G}(0)=0$, and moreover, it satisfies
\begin{eqnarray}
&&\hat{Y}^{G}(s)=\hat{\gamma}^{G}(s)\;\;\mbox{for all}\;\;s\in{\cal
Q}\cap[0,\zeta\wedge\zeta_{1}).\nonumber
\end{eqnarray}
Then it follows from \eq{hatxg}, \eq{intuoc} and the similar
expression as in \eq{difws} that, along $r\in{\cal R}_{2}$ and for
each $s\in[0,\zeta\wedge\zeta_{1})$,
\begin{eqnarray}
&&\hat{\beta}^{G}(s)\equiv\lim_{r\rightarrow\infty}\hat{W}^{r,G}(s)
=\hat{X}^{G}(s)+\hat{Y}^{G}(s)\geq 0. \elabel{betaweq}
\end{eqnarray}
However, the complementarity may not be true for
$(\hat{W}^{G}(t),\hat{Y}^{G}(t))$. Therefore it follows from
\eq{intuoc}-\eq{betaweq} and the minimality of the Skorohod problem
(see, e.g., \cite{cheyao:funque}, \cite{dai:broapp},
\cite{daidai:heatra}, and \cite{harrei:refbro}) that
\begin{eqnarray}
&&\hat{\beta}^{G}(s)\geq\hat{W}(s)\;\;\mbox{for all}\;\;
s\in[0,\zeta\wedge\zeta_{1}).\elabel{fzetazetao}
\end{eqnarray}
Hence, if $t\in[0,\zeta\wedge\zeta_{1})$, then we know that, along
$r\in{\cal R}_{2}$,
\begin{eqnarray}
&&\hat{W}^{G}(t)=\lim_{r\rightarrow\infty,\;r\in{\cal
R}}\hat{W}^{r,G}(t)=\lim_{r\rightarrow\infty,\;r\in{\cal
R}_{2}}\hat{W}^{r,G}(t) =\hat{\beta}^{G}(t)\geq\hat{W}(t),
\elabel{hatwhatb}
\end{eqnarray}
which is always true if $\zeta=\zeta_{1}=\infty$.

Furthermore, if $\zeta<\zeta_{1}$ or $\zeta=\zeta_{1}<\infty$, and
$t\in[\zeta,\infty)$, then we can take a time $\tau\in[\zeta,t]$
such that $\bar{T}^{G}_{j}(\tau)\neq\bar{c}_{j}(\tau)$ for some
$j\in{\cal J}$. So it follows from \eq{flimqtnon} that
$\bar{T}^{G}_{j}(\tau)<\bar{c}_{j}(\tau)$ and
$\bar{Q}_{j}^{G}(\tau)>0$ for the $j$. Then it follows from
\eq{flimwtnon}-\eq{flimwtnonI} that
$\bar{W}^{G}(t)\geq\bar{W}^{G}(\tau)>0$. Therefore, along $r\in{\cal
R}_{2}$, we have
\begin{eqnarray}
&&\hat{W}^{G}(t)=\lim_{r\rightarrow\infty,\;r\in{\cal
R}_{2}}\hat{W}^{r,G}(t) =\lim_{r\rightarrow\infty,\;r\in{\cal
R}_{2}}r\bar{W}^{r,G}(t)=+\infty\geq\hat{W}(t). \elabel{zetaaf}
\end{eqnarray}
In addition, if $\zeta>\zeta_{1}$ and $t\in[\zeta_{1},\infty)$, then
it follows from \eq{ftaui} that
\begin{eqnarray}
&&\liminf_{r\rightarrow\infty,\;r\in{\cal
R}_{2}}\hat{Y}^{r,G}(t)\geq \lim_{r\rightarrow\infty,\;r\in{\cal
R}_{2}}\hat{Y}^{r,G}(\zeta_{1})
=\hat{\gamma}^{G}(\zeta_{1})=+\infty. \elabel{zetaoney}
\end{eqnarray}
Thus, by \eq{hatxg}, we know that
\begin{eqnarray}
&&\hat{W}^{G}(t)=\lim_{r\rightarrow\infty,\;r\in{\cal
R}_{2}}\hat{W}^{r,G}(t)=+\infty\geq\hat{W}(t). \elabel{finalwg}
\end{eqnarray}
Since the given time $t\in[0,\infty)$ is arbitrarily taken, it
follows from \eq{hatwhatb}, \eq{zetaaf} and \eq{finalwg} that the
claim \eq{optimaleqn} in the theorem is true for any $t\geq 0$.

In the end, it follows from \eq{optimaleqn} and \eq{qqast} that
\eq{optimaleqnI} is true. Hence we finish the proof of
Theorem~\ref{rsdifth}. $\Box$

\section{Proofs of Lemmas~\ref{smoothsurfaces} and~\ref{smoothsurfacesI}}
\label{prooflemmas}

First of all, to be convenient for readers, we outline the proofs of
the two lemmas as follows:

For the proof of Lemma~\ref{smoothsurfaces}, we first use the
optimization technique studied in \cite{goljaf:caplim} and
\cite{yurhe:itewat} to characterize the boundary of the MAC capacity
region presented in \eq{macform}, i.e., the region in \eq{macform}
is convex and thus the boundary of it can be fully characterized by
maximizing the function $\sum_{j=1}^{J}\nu_{j}r_{j}$ over all rate
vectors in the region and for all nonnegative priority vectors
$\nu=(\nu_{1},...,\nu_{J})$ such that $\sum_{j=1}^{J}\nu_{j}=1$.
Then, based on priority vectors and permutation schemes, we can
determine the number of boundary pieces of the region, which is
consistent with what is obtained in \cite{liuhou:weipro}. Finally,
by applying the KKT optimality conditions and the implicit function
theorem, we can prove that the boundary of the MAC capacity region
consists of the derived number of linear or smooth curved facets.

For the proof of Lemma~\ref{smoothsurfacesI}, we use the duality of
the capacity regions between MAC and BC to transform the discussion
for BC to the one for MAC (see, e.g., \cite{goljaf:caplim}).

\subsection{Proof of Lemma~\ref{smoothsurfaces}}

Notice that, for a fixed priority vector $\nu$, the optimization
characterization described in the outline is equivalent to finding
the point on the capacity boundary that is tangent to a line whose
slope is defined by the priority vector. Due to the structure of the
capacity region, we can see that all boundary points of the region
are corner points of polyhedrons corresponding to different sets of
covariance matrices. In addition, the corner point should correspond
to successive decoding in order of increasing priority, i.e., the
user with the highest priority should be decoded last and,
therefore, sees no interference. Hence, by \cite{goljaf:caplim} and
\cite{yurhe:itewat}, the problem of finding the boundary point on
the capacity region associated with a descending ordered priority
vector $\nu$ can be written as
\begin{eqnarray}
&&\max_{\{\Gamma_{j}(i)\geq 0,\mbox{Tr}(\Gamma_{j}(i))\leq
P_{j},j\in{\cal
J}\}}f(\Gamma_{1}(i),...,\Gamma_{J}(i),\nu)\elabel{bounopt}
\end{eqnarray}
where
\begin{eqnarray}
&&f(\Gamma_{1}(i),...,\Gamma_{J}(i),\nu)\elabel{fgamma}\\
&=&\nu_{J}\mbox{log}
\left|I+\sum_{j=1}^{J}H_{j}^{\dagger}(i)\Gamma_{j}(i)H_{j}(i)\right|
+\sum_{j=1}^{J-1}\left((\nu_{j}-\nu_{j+1})\mbox{log}
\left|I+\sum_{l=1}^{j}H_{l}^{\dagger}(i)\Gamma_{l}(i)H_{l}(i)
\right|\right) \nonumber
\end{eqnarray}
which is concave in the covariance matrices.

Now, let $\tilde{\nu}_{j}=\nu_{j}-\nu_{j+1}$ for $j\in\{1,...,J-1\}$
and $\tilde{\nu}_{J}=\nu_{J}$. Then, for any integer
$m\in\{1,...,J-1\}$, let $S(k_{1},...,k_{m})$ denote the following
set corresponding to exactly having $m$ indices
$k_{1},...,k_{m}\in\{1,...,J-1\}$ such that
$\tilde{\nu}_{k_{1}}=...=\tilde{\nu}_{k_{m}}=0$, i.e.,
\begin{eqnarray}
&&S(k_{1},...,k_{m})\equiv\{f(\Gamma_{1}(i),...,
\Gamma_{J}(i),\nu):\Gamma_{j}(i)\geq 0,\tilde{\nu}_{j}\geq
0\;\;\mbox{for}\;\;j\in{\cal
J},\tilde{\nu}_{k_{1}}=...=\tilde{\nu}_{k_{m}}=0,\elabel{mseteq}\\
&&\;\;\;\;\;\;\;\;\;\;\;\;\;\;\;\;\;\;\;\;\;\;\;\;\;\;\;\;\;\;\;\;\;\;\;\;
k_{j}\neq k_{l}\;\;\mbox{for}\;\; j\neq
l\;\;\mbox{and}\;\;j,l\in\{1,...,m\},\;\nu_{J}>0,\;
\sum_{j=1}^{J}\nu_{j}=1\}.\nonumber
\end{eqnarray}
Moreover, if $m=0$, we use $S(k_{0})$ to denote the set
corresponding to $\tilde{\nu}_{j}>0$ for all $j\in{\cal J}$, i.e.,
\begin{eqnarray}
&&S(k_{0})\equiv\left\{f(\Gamma_{1}(i),...,
\Gamma_{J}(i),\nu):\Gamma_{j}(i)\geq
0,\tilde{\nu}_{j}>0\;\;\mbox{for}\;\;j\in{\cal
J},\nu_{J}>0,\sum_{j=1}^{J}\nu_{j}=1\right\}. \elabel{mseteqI}
\end{eqnarray}
In addition, if $m=J$, $\nu_{J}=0$, we use $S(k_{J})$ to denote the
following set corresponding to $\nu_{J}=0$,
\begin{eqnarray}
&&S(k_{J})\equiv\left\{f(\Gamma_{1}(i),...,
\Gamma_{J}(i),\nu):\Gamma_{j}(i)\geq 0\;\;\mbox{for}\;\;j\in{\cal
J},\;\nu_{J}=0,\;\sum_{j=1}^{J}\nu_{j}=1\right\}. \elabel{mseteqII}
\end{eqnarray}
Eventually, we can define
\begin{eqnarray}
&&S(k_{0},k_{1},...,k_{m})=\left\{\begin{array}{ll}
S(k_{1},...,k_{m})&\mbox{if}\;\;m\in{\cal J},\\
S(k_{0})&\mbox{if}\;\;m=0,\\
S(k_{J})&\mbox{if}\;\;m=J.
\end{array}\right.\elabel{mseteqII}
\end{eqnarray}
Thus we have
\begin{eqnarray}
&&\left\{f(\Gamma_{1}(i),...,\Gamma_{J}(i),\nu): \Gamma_{j}(i)\geq
0,\;\nu_{j}\geq 0,\;\tilde{\nu}_{j}\geq 0\;\;\mbox{for}\;\;j\in{\cal
J},
\sum_{j=0}^{J}\nu_{j}=1\right\} \elabel{fgdecomp}\\
&&=\bigcup_{m=1}^{J} \bigcup_{k_{1},...,k_{m}\in{\cal
J}}S(k_{0},k_{1},...,k_{m}). \nonumber
\end{eqnarray}

Note that the $J$ users can be arbitrarily ordered, so we have $J!$
such priority orders, e.g.,
$\nu_{j_{1}}\geq\nu_{j_{2}}\geq...\geq\nu_{j_{J}}$, where
$(j_{1},...,j_{J})$ is a permutation of $(1,...,J)$. Thus we can see
that our capacity region is bounded by $L$ boundary pieces with $L$
given by \eq{numberL}. In fact, the first term $J!$ on the
right-hand side of the first equality in \eq{numberL} is the number
of boundary pieces corresponding to all
$\nu_{j_{1}}>\nu_{j_{2}}>...>\nu_{j_{J}}$, $C_{J}^{j}(J-j+1)!$
($j\in\{2,...,J\}$) is the number of boundary pieces corresponding
to all $\nu_{k_{1}}=...=\nu_{k_{j}}$ with
$k_{1},...,k_{j}\in\{1,...,J\}$ and $k_{l}\neq k_{h}$ for $l\neq h$
when $\nu_{j_{J}}>0$, and the last term $J$ on the right-hand side
of \eq{numberL} is the number of boundary pieces corresponding to
$\nu_{j_{J}}=0$. Here we remark that the number of boundary pieces
obtained through the above method is consistent with the one derived
in \cite{liuhou:weipro}.

Next we show the smoothness of these boundary pieces. Without loss
of generality, our discussion will focus on a specific set
$S(k_{0})$ in a particular user priority order since the discussions
for all other cases are similar. Therefore, we have that
$\nu_{1}>\nu_{2}>...>\nu_{J}>0$. Moreover, let
$y=(y_{1},...,y_{2NNJ})'$ denote the $(2NNJ)$-dimensional vector
formed by the real part and the imaginary part of entries of
$\Gamma_{1}(i)$,...,$\Gamma_{J}(i)$ in a suitable order. Thus we
know that $f(\Gamma_{1}(i),...,\Gamma_{J}(i),\nu)=f(y,\nu)$ is
concave in $y$ for each given $\nu\geq 0$. Then the optimization
problem in \eq{bounopt} can be restated as follows.
\begin{eqnarray}
&&\max_{y\geq 0}f(y,\nu)\elabel{transopt}
\end{eqnarray}
subject to
\begin{eqnarray}
&&f_{j}(y)\equiv\mbox{Tr}(\Gamma_{j}(i))-P_{j}\leq 0\;\;\;\mbox{for
all}\;\;\;j\in{\cal J}.\elabel{fconstraint}
\end{eqnarray}
So it follows from the KKT optimality conditions (see, e.g.,
\cite{lue:linnon}) that the solution to the optimization problem in
\eq{transopt}-\eq{fconstraint} for a function $f(y,\nu)\in S(k_{0})$
with the associated $\nu\geq 0$ can be obtained through the
following equations,
\begin{eqnarray}
&&y_{l}\left(\frac{\partial f(y,\nu)}{\partial
y_{l}}+\sum_{j=1}^{J}\eta_{j}\frac{\partial f_{j}(y)}{\partial
y_{l}}\right)=0\;\;\mbox{for each}\;\;l\in\{1,2,...,2NNJ\},
\elabel{fuoptsoI}\\
&&\eta_{j}f_{j}(y)=0\;\;\mbox{for each}\;\;j\in{\cal J},
\elabel{fuoptsoII}
\end{eqnarray}
where $\eta_{j}\geq 0$ for $j\in{\cal J}$ are the Lagrangian
multipliers. Then our remaining discussion can be divided into the
following two steps.

{\bf Step One:} If there exists some $\nu\in{\cal N}$ such that the
problem in \eq{transopt}-\eq{fconstraint} for the function
$f(y,\nu)\in S(k_{0})$ has at least one optimal solution located in
the interior of the associated feasible region, where
\begin{eqnarray}
&&{\cal
N}\equiv\left\{\nu=(\nu_{1},...,\nu_{J})':\nu_{1}>...>\nu_{J}>0,
\sum_{j=1}^{J}\nu_{j}=1\right\}, \elabel{setoneI}
\end{eqnarray}
then we have the following discussions.

Firstly, we suppose that the optimal solution is unique given by
$y^{*}=(y^{*}_{1},...,y^{*}_{2NNJ})'$. Then we know that $f(y,\nu)$
is strictly concave since it is sufficiently smooth in $y$ for the
given $\nu$ due to the definition of $f$. So it follows from
\eq{fuoptsoI}-\eq{fuoptsoII} that
\begin{eqnarray}
&&F_{l}(y^{*},\nu)\equiv\frac{\partial f(y^{*},\nu)}{\partial
y_{l}}=0\;\;\mbox{for all}\;\;l\in\{1,...,2NNJ\}.
\elabel{minuscompI}
\end{eqnarray}
Moreover, it follows from Theorem 4.3.1 in page 115 of
\cite{hirlem:funcon} that the following Hessian matrix
\begin{eqnarray}
&&\bigtriangledown^{2}f(y,\nu)\equiv
\left(\frac{\partial^{2}f(y,\nu)}{\partial y_{l}
\partial y_{k}}\right)_{(2NNJ)\times(2NNJ)}\;\;\mbox{for
all}\;\;l,k\in\{1,...,2NNJ\}. \elabel{hessianmI}
\end{eqnarray}
is positive definite at all $y$ within the $(2NNJ)$-dimensional
feasible region. Now define
\begin{eqnarray}
&&F(y,\nu)\equiv\{F_{l}(y,\nu),l\in{\{1,...,2NNJ\}}\}.\nonumber
\end{eqnarray}
Thus we know that $F(y^{*},\nu)=0$ and the Jacobian determinant of
$F(y,\nu)$ with respect to $y$ at $(y^{*},\nu)$ is nonzero due to
\eq{hessianmI}, i.e.,
\begin{eqnarray}
&&\frac{D(F_{1},...,F_{2NNJ})}{D(y_{1},...,y_{2NNJ})}\neq
0.\elabel{jacdet}
\end{eqnarray}
Therefore $F(y,\nu)$ satisfies all the conditions as stated in the
implicit function theorem. Hence $F(y,\nu)=0$ uniquely determines a
$(2NNJ)$-dimensional function $y^{*}(\nu)$ that is continuous and
differentiable with respect to $\nu$ in a neighborhood
$O(\nu,\epsilon)$ of $\nu$. Moreover, \eq{jacdet} and
\eq{minuscompI} hold in $O(\nu,\epsilon)$, which implies that
$y^{*}(\nu)$ is an optimal solution to the problem in
\eq{transopt}-\eq{fconstraint} for each $\nu\in O(\nu,\epsilon)$.

Secondly, we suppose that the problem in
\eq{transopt}-\eq{fconstraint} for the function $f(y,\nu)\in
S(k_{0})$ has multiple optimal solutions located in the interior of
the associated feasible region. Without loss of generality, we
suppose that these optimal points are all in a $m$-dimensional
hyperplane that is parallel to each coordinate-axis corresponding to
those $y$ with part of its components, $y_{s_{l}}\in{\cal Y}$, where
\begin{eqnarray}
&&{\cal Y}\equiv\{y_{s_{l}}\in R,l\in\{1,...,
m\},s_{l}\in\{1,...,2NNJ\}\}\;\;\mbox{for some}\;\;
m\in\{1,...,2NNJ\}. \nonumber
\end{eqnarray}
(Here we remark that, if this is not the case, we can employ the
method of rotation transformation to make this case true.)
Therefore, due to \eq{fgamma} and the concavity of $f(y,\nu)$ in
$y$, we know that $f(y,\nu)$ is independent of $y_{s_{l}}\in{\cal
Y}$. Thus there exists a $(2NNJ-m)$-dimensional set $P_{\nu}$
corresponding to each $\nu$ such that $f(y,\nu)$ only depends on
$y_{s_{l}}\in{\cal Y}^{c}$ (the complementary set of ${\cal Y}$) and
is strictly concave in those $y_{s_{l}}$. Therefore for any optimal
point $y^{*}(\nu)$ in the set $P_{\nu}$ and by considering the
similar $(2NNJ-m)$-dimensional problem as in
\eq{minuscompI}-\eq{hessianmI}, we can conclude that $y^{*}(\nu)$ is
continuous and differentiable in a neighborhood $O(\nu,\epsilon)$ of
$\nu$.

So, if the optimal points of $f(y,\nu)$ are all strictly located
within the feasible region when $\nu$ moves in ${\cal N}$, it
follows from the above discussion that $f(y,\nu)$ keeps either
strictly concave or flat with respect to $y_{s_{l}}\in{\cal Y}^{c}$
or $y_{s_{l}}\in{\cal Y}$ for all $\nu\in{\cal N}$. Thus we can
conclude that all the optimal paths $y^{*}(\nu)$ are continuous and
differentiable with respect to $\nu\in{\cal N}$. In other words, any
set $\{\Gamma^{*}_{1}(\nu,i),...,\Gamma^{*}_{J}(\nu,i)\}$  of the
optimal covariance matrices is continuous and differentiable with
respect to $\nu\in{\cal N}$. Hence it follows from \eq{macform} that
the corner points of the capacity region, which are determined by
the following equations, form a smooth curved facet
$f(\Gamma_{1}^{*}(\nu,i),...,\Gamma_{J}^{*}(\nu,i))$ when $\nu$
moves in the region ${\cal N}$, and moreover, the facet does not
depend on the choice of the set
$\{\Gamma_{1}^{*}(\nu,i),...,\Gamma_{J}^{*}(\nu,i)\}$ along
$\nu\in{\cal N}$. In addition, due to \eq{macform}, for all
$j\in{\cal J}$, we have
\begin{eqnarray}
&&c_{j}(\nu)=\mbox{log}\left|I+\sum_{l=1}^{j}
H_{l}^{\dagger}(i)\Gamma^{*}_{l}(\nu,i)H_{l}(i)\right|
-\mbox{log}\left|I+\sum_{l=1}^{j-1}
H_{l}^{\dagger}(i)\Gamma^{*}_{l}(\nu,i)H_{l}(i)\right|
\elabel{nusurf}
\end{eqnarray}

However, if some optimal point of $f(y,\nu)$ reaches one of the
boundaries of the feasible region when $\nu$ moves in ${\cal N}$,
then the associated justification for this case is part of the proof
in the following Step Two.

{\bf Step Two:} Without loss of generality, we suppose that
$f(y,\nu)$ is strictly concave for all $\nu\in{\cal N}$ and
otherwise we can employ the similar argument as above. Therefore, if
$y^{*}=(y^{*}_{1},...,y^{*}_{2NNJ})'$ is the solution to the
optimization problem in \eq{transopt}-\eq{fconstraint}, which is
located on one of the boundary pieces, and if ${\cal
Y}^{*}\equiv\{y_{s_{l}}^{*},l\in\{1,...,
m\},s_{l}\in\{1,...,2NNJ\}\}$ for some $m\in\{1,...,2NNJ\}$ is the
set of components of $y^{*}$, which are either $0$ or on the surface
$f_{j}(y)=0$ for some $j\in{\cal J}$ since $f_{j}(y)$ depends only
on part of the components of $y$, then the remaining components of
$y^{*}$ are located in the interior of the corresponding
$(2NNJ-m)$-dimensional feasible region and satisfy
\begin{eqnarray}
&&F_{l}(y^{*},\eta^{*},\nu)\equiv\frac{\partial
f(y^{*},\nu)}{\partial y_{l}}+\sum_{j\in{\cal
L}_{1}}\eta^{*}_{j}\frac{\partial f_{j}(y^{*})}{\partial
y_{l}}=0\elabel{minuscomp}
\end{eqnarray}
for each $l\in {\cal L}\equiv\{l:y^{*}_{s_{l}}\in
y^{*}\setminus{\cal Y}^{*}\}$, where ${\cal L}_{1}\equiv{\cal
J}\cap\{j:f_{j}(y^{*})=0\}$ and $\eta^{*}_{j}$ ($j\in{\cal L}_{1}$)
are the Lagrangian multipliers corresponding to $y^{*}$. Now let
$y\in R_{+}^{2NNJ}$ denote the vector whose components $y_{s_{l}}$
for all $l\in\{1,...,m\}$ are confined in ${\cal Y}^{*}$. Then
$f(y,\nu)$ is strictly concave in the components of $y$ except those
$y_{s_{l}}\in{\cal Y}^{*}$ since it is sufficiently smooth in $y$
and since $y^{*}_{s_{l}}$ with $l\in{\cal L}$ is in the interior of
the corresponding $(2NNJ-m)$-dimensional feasible region. Thus it
follows from Theorem 4.3.1 in page 115 of \cite{hirlem:funcon} that
the following Hessian matrix
\begin{eqnarray}
&&\bigtriangledown^{2}f(y,\nu)\equiv
\left(\frac{\partial^{2}f(y,\nu)}{\partial y_{l}
\partial y_{k}}\right)_{(2NNJ-m)\times(2NNJ-m)}\;\;\mbox{for
all}\;\;l,k\in{\cal L}. \nonumber
\end{eqnarray}
is positive definite at all $y$ whose components $y_{s_{l}}$ for all
$l\in\{1,...,m\}$ are confined in ${\cal Y}^{*}$. Moreover, if we
define
\begin{eqnarray}
&&F(y,\eta,\nu)\equiv\left\{\begin{array}{ll}
F_{l}(y,\eta,\nu)&\mbox{if}\;\;l\in{\cal L},\\
F_{l}(y)=f_{l}(y)&\mbox{if}\;\;l\in{\cal L}_{1},
\end{array}\right.\elabel{fttt}
\end{eqnarray}
we can conclude that the Jacobian determinant of $F(y,\eta,\nu)$
with respect to $y_{l}$ $(l\in{\cal L})$ and $\eta_{l}$ ($l\in{\cal
L}_{1}$) is nonzero at $y^{*}$. Moreover, due to the definition of
$f(y,\nu)$ and $f_{j}(y)$ for $j\in{\cal J}$, we know that
$F(y,\eta,\nu)$ satisfies all the conditions as stated in the
implicit function theorem. Hence $F(y,\eta,\nu)=0$ uniquely
determines a $(2NNJ+\bar{J})$-dimensional function
$(y^{*}(\nu),\eta^{*}(\nu))$ that is continuous and differentiable
in $\nu\in{\cal N}$ (where $\bar{J}$ is the number of $f_{l}$
($l\in{\cal L}_{1}$) such that $f_{l}(y^{*})=0$), and moreover, all
the components $y^{*}_{s_{l}}(\nu)$ with $l\in\{1,...,m\}$ are
confined in ${\cal Y}^{*}$ when $\nu\in{\cal N}$ moves. Therefore
the remaining proof of this boundary situation can be divided into
the following three cases.

{\em Case One:} When $u\in{\cal N}$ continuously moves to a vector
$\nu\in{\cal N}$, the optimal point $y^{*}(u)$ moves from the
interior of the feasible region to the optimal point
$y^{*}(\nu)(=y^{*})$ on the boundary of the feasible region. Then we
need to prove that $y^{*}(u)$ and its associated derivatives
converge to $y^{*}(\nu)$ and its corresponding derivatives as $u$
converges to $\nu$ continuously within a neighborhood of
$\nu\in{\cal N}$ in the whole $2NNJ$-dimensional feasible region,
which implies that the components $y^{*}_{s_{l}}(u)$ for all
$l\in\{1,...,m\}$ are not necessarily confined in ${\cal Y}^{*}$
when $u\in{\cal N}$ moves.

In fact, let ${\cal
L}_{2}\equiv\{k\in\{1,...,m\}:y^{*}_{s_{k}}(\nu)=0,
y^{*}_{s_{k}}(\nu)\in{\cal Y}^{*}\}$ and define the following
constraints of parallel surfaces,
\begin{eqnarray}
&&\tilde{f}_{j}(y,b)\equiv f_{j}(y)-b_{j}=0\;\;\mbox{for}\;\;
j\in{\cal L}_{1},
\elabel{newconstraintI}\\
&&g_{k}(y,b)\equiv y_{s_{k}}-b_{k}=0\;\;\mbox{for}\;\;k\in{\cal
L}_{2},\elabel{newconstraintII}
\end{eqnarray}
where $b$ is an arbitrary constant vector whose components are given
by $b_{j}$ ($j\in{\cal L}_{1}$) and $b_{k}$ ($k\in{\cal L}_{2}$).
Therefore, by applying the KKT optimality conditions, the optimal
solution $\tilde{y}^{*}$ to the the problem \eq{transopt} with the
constraints \eq{newconstraintI}-\eq{newconstraintII} should be given
by the following equations
\begin{eqnarray}
&&F_{l}(\tilde{y}^{*},\tilde{\eta}^{*},\nu,b)\equiv\frac{\partial
f(\tilde{y}^{*},\nu)}{\partial y_{l}}+\sum_{j\in{\cal
L}_{1}}\tilde{\eta}^{*}_{j}\frac{\partial
\tilde{f}_{j}(\tilde{y}^{*},b)}{\partial y_{l}}+\sum_{k\in{\cal
L}_{2}}\tilde{\eta}^{*}_{k}\frac{\partial
g_{k}(\tilde{y}^{*},b)}{\partial y_{l}}=0\elabel{minuscompII}
\end{eqnarray}
for each $l\in {\cal L}\equiv\{l:y^{*}_{s_{l}}\in
y^{*}\setminus{\cal Y}^{*}\}$, where $\tilde{\eta}^{*}_{j}$
($j\in{\cal L}_{1}$) and $\tilde{\eta}_{k}^{*}$ ($k\in{\cal L}_{2}$)
are the related Lagrangian multipliers corresponding to
$\tilde{y}^{*}$. Now, for each $\tilde{y}\in R^{2NNJ}$, define
\begin{eqnarray}
&&F(\tilde{y},\tilde{\eta},\nu,b)\equiv\left\{\begin{array}{ll}
F_{l}(\tilde{y},\tilde{\eta},\nu,b)&\mbox{if}\;\;l\in{\cal L},\\
F_{l}(\tilde{y},b)=\tilde{f}_{l}(\tilde{y},b)&\mbox{if}\;\;
l\in{\cal L}_{1},\\
F_{l}(\tilde{y},b)=g_{l}(\tilde{y},b)&\mbox{if}\;\; l\in{\cal
L}_{2}.
\end{array}\right.\elabel{ftttI}
\end{eqnarray}
Then, by the similar argument as used for \eq{fttt}, we know that
there is a unique $(2NNJ+\bar{J})$-dimensional optimal path
$(\tilde{y}^{*}(u,b),\tilde{\eta}^{*}(u,b))$ which is continuous and
differentiable with respect to $(u,b)\in{\cal N}\times
R^{\tilde{J}}$ (where $\tilde{J}$ is the dimension of $b$), and
moreover, all the components of $\tilde{y}^{*}(u,b)$ corresponding
to $y^{*}_{s_{l}}\in{\cal Y}^{*}$ satisfy the constraints
\eq{newconstraintI}-\eq{newconstraintII}, either being $b_{l}$
($l\in{\cal L}_{2}$) or on the boundary $f_{l}(y)=b_{l}$ ($l\in{\cal
L}_{1}$). Thus we know that $\tilde{y}^{*}(u,b)$ and its associated
derivatives converge to $y^{*}(\nu)$ and its corresponding
derivatives as $(u,b)$ converges to $(\nu,0)$ continuously.
Moreover, notice that $y^{*}(u)=\tilde{y}^{*}(u,b)$ when $b_{l}>0$
($l\in{\cal L}_{2}$) and $b_{l}<0$ ($l\in{\cal L}_{1}$) are all
close to zero, which implies that all the components of
$\tilde{y}^{*}(u,b)$ corresponding to $y^{*}_{s_{l}}\in{\cal Y}^{*}$
are also continuous and differentiable with respect to $u\in{\cal
N}$ when $b_{l}>0$ ($l\in{\cal L}_{2}$) and $b_{l}<0$ ($l\in{\cal
L}_{1}$) are all close to zero. So we can conclude that $y^{*}(u)$
and its associated derivatives converge to $y^{*}(\nu)$ and its
associated derivatives as $u\rightarrow\nu$, which implies that
$y^{*}(\nu)$ is continuous and differentiable at a neighborhood of
$\nu$ in the whole $2NNJ$-dimensional feasible region.

{\em Case Two:} When $u\in{\cal N}$ moves to a vector $\nu\in{\cal
N}$, the optimal point $y^{*}(u)$ moves to the optimal point
$y^{*}(\nu)$ ($=y^{*}$) from a boundary piece of the feasible region
next to the boundary piece on which $y^{*}(\nu)$ is located. The
proof for this case is similar to the one as used in {\em Case One}.
Hence we omit it.

{\em Case Three:} When $u\in{\cal N}$ moves to a vector $\nu\in{\cal
N}$, the optimal point $y^{*}(u)$ moves to the optimal point
$y^{*}(\nu)$ ($=y^{*}$) from a boundary piece of the feasible region
that is not next to the boundary piece on which $y^{*}(\nu)$ is
located. Due to the concavity of $f(y,u)$, the optimal point
$y^{*}(u)$ must go first into the interior of the feasible region
and then to the other boundary piece. Therefore, the proof for this
case is the same as the one as used in {\em Case One}.

In the end, we note that the boundary piece corresponding to
$S(k_{1},...,k_{J})$ is a $J$-dimensional linear facet, which is
determined by the sum-rate capacity bound (see, e.g.,
\cite{yurhe:itewat} for more details). Hence we we finish the proof
of Lemma~\ref{smoothsurfaces}. $\Box$

\subsection{Proof of Lemma~\ref{smoothsurfacesI}}

It follows from \cite{goljaf:caplim} that the capacity region for
the $J$-user MIMO BC with $N=1$ and each $i\in{\cal K}$ is given by
\begin{eqnarray}
&&{\cal R}(i)={\cal C}_{BC}(P,H(i))
\elabel{bccapcity}\\
&=&\bigcup_{\{(P_{1},...,P_{J}):\sum_{j=1}^{J}P_{j}=P\}} {\cal
C}_{MAC}(P_{1},...,P_{J},H^{\dagger}(i))\nonumber\\
&=&\bigcup_{\{(P_{1},...,P_{J}):\sum_{j=1}^{J}P_{j}=P\}} \left\{c\in
R_{+}^{J}:\sum_{j\in S}c_{j}\leq\frac{1}{2}\log\left|I+\sum_{j\in
S}H_{j}^{\dagger}(i)P_{j}H_{j}(i)\right|,\forall\;\;S\subset{\cal
J}\right\}.\nonumber
\end{eqnarray}
So, due to the similarity of structures between ${\cal M}(i)$ in
\eq{macform} and ${\cal R}(i)$ in \eq{bccapcity}, we can apply the
similar discussion as for the MIMO MAC and the discussion in
\cite{visjin:duaach} to conclude that the claims in the lemma are
true. Hence we we finish the proof of Lemma~\ref{smoothsurfacesI}.
$\Box$

%
%
%





\begin{thebibliography}{}

\bibitem{acabha:bescas}
Acampora, A. S., Bhardwaj, S., and Tamari, R. M. 2006. On best-case
throughput of cellular data networks with cooperating base stations.
{\it Proc. of the Allerton Conference on Communication, Control, and
Computing}, Monticello, IL.

\bibitem{asmgly:diserr}
Asmussen, S., Glynn, P., and Pitman, J. 1995. Discretization error
in simulation of one-dimensional reflection Brownian motion. {\it
Annals of Applied Probability} {\bf 5}(4) 875-996.

\bibitem{bhawil:peruse}
Bhardwaj, S., Williams, R. J., and Acampora, A. S. 2007. On the
performance of a two-user MIMO downlink system in heavy traffic.
{\it IEEE Transactions on Information Theory} {\bf 53}(5) 1851-1859.

\bibitem{bhawil:difapp}
Bhardwaj, S. and Williams, R. J. 2009. Diffusion approximation for a
heavily loaded multi-user wireless communication system with
coorperation. {\it Queueing Systems} {\bf 62} 345-382.

\bibitem{bil:conpro}
Billingsley, P. 1999. Convergence of Probability Measures, 2nd ed.,
John Wiley $\&$ Sons, New York.

\bibitem{bra:staspa}
Bramson, M. 1998. State space collapse with application to heavy
traffic limits for multiclass queueing networks. {\it Queueing
Systems} {\bf 30} 89-148.

\bibitem{bradai:heatra}
Bramson, M. and Dai, J. G. 2001. Heavy traffic limits for some
queueing networks. {\it Annals of Applied Probability} {\bf 11} (1)
49-90.

\bibitem{bre:poipro}
Br\'emaud, P. 1981. Point Processes and Queues: Martingale Dynamics,
Springer-Verlag, New York.

\bibitem{caisha:achrat}
Caire, G. and Shamai, S. 2000. On achievable rates in a
multi-antenna broadcast downlink. {\it Proc. 38th Annual Allerton
Conf. Communications, Control, Computing}, Oct. 2000, 1188-1193.

\bibitem{caocle:inttra}
Cao, J., Cleverland, W. S., Lin, D., and Sun, D. X. 2001. Internet
traffic tends to Poisson and independent as the load increases. {\it
Bell Labs Tech Report}.

\bibitem{cheyao:funque}
Chen, H. and Yao, D. D. 2001. Fundamentals of Queueing Networks:
Performance, Asymptotics, and Optimization. Springer-Verlag, New
York.

\bibitem{chever:gaumul}
Cheng, R. and Verdu, S. 1993. Gaussian multiaccess channels with
ISI: Capacity region and multiuser water-filling. {\it IEEE
Transactions on Information Theory} {\bf 39} 773-785.

\bibitem{choman:fludif}
Choudhury, G. L., Mandelbaum, A., Reiman, M. I., and Whitt, W. 1997.
Fluid and diffusion limits for queues in slowly changing
environments. {\it Stochastic Models} {\bf 13}(1) 121-146.

\bibitem{covtho:eleinf}
Cover, T. M. and Thomas, T. A. 1991. Elements of Information Theory.
New York, Wiley.

\bibitem{cos:wridir}
Costa, M. H. M. 1983. Writing on dirty paper. {\it IEEE Transactions
on Information Theory} {\bf 29}(3) 439-441.

\bibitem{dai:poshar}
Dai, J. G. 1995. On positive harris recurrence of multiclass
queueing networks: a unified approach via fluid limit models. {\it
Annals of Applied Probability} {\bf 5}(1) 49-77.

\bibitem{daidai:heatra}
Dai, J. G. and Dai, W. 1999. A heavy traffic limit theorem for a
class of open queueing networks with finite buffers. {\it Queueing
Systems} {\bf 32} 5-40.

\bibitem{dai:broapp}
Dai, W. 1996. Brownian Approximations for Queueing Networks with
Finite Buffers: Modeling, Heavy Traffic Analysis and Numerical
Implementations. {\it Ph.D Thesis}, Georgia Institute of Technology,
1996, Also published in UMI Dissertation Services, A Bell $\&$
Howell Company, Michigan, U.S.A.

\bibitem{dai:difapp}
Dai, W. 2007. Diffusion approximations for multiclass queueing
networks under preemptive priority service discipline. {\it Applied
Mathematics and Mechanics} {\bf 28}(10) 1331-1342.

\bibitem{dai:contru}
Dai, W. 2011. On the conflict of truncated random variable vs.
heavy-tail and long ronge dependence in computer and network
simulation. {\it Journal of Computational Information Systems} {\bf
7}(5) 1488-1499.

\bibitem{dai:heatra}
Dai, W. 2011. Heavy traffic limit theorems for a queue with Poisson
ON/OFF long-range dependent sources and general serive time
distribution. {\it Acta Mathematicae Applicatae Sinica, English
Series}, to appear (and available through
http://arxiv.org/abs/1105.1363v1).

\bibitem{daiwan:optcon}
Dai, W. and Wang, S. 2009. Optimal control based on a general
exponential scheduling rule for a generalized switch. {\it
Proceedings of Communications and Mobile Computing} {\bf 2} 332-336,
IEEE Computer Society Press.

\bibitem{ethkur:marpro}
Ethier, S. N. and Kurtz, T. G. 1986. Markov Processes:
Characterization and Convergence, John Wiley $\&$ Sons Inc., New
York.

\bibitem{gamcov:muluse}
Gamal, A. EL and Cover, T. M. 1980. Multiple user information
theory, {\it Proceedings of The IEEE} {\bf 68}(12) 1466-1483.

\bibitem{goljaf:caplim}
Goldsmith, A., Jafar, S. A., Jindal, and Vishwanath, N. 2003.
Capacity limits of MIMO Channels. {\it IEEE Journal on Selected
Areas in Communications} {\bf 21}(5) 684-702.

\bibitem{harrei:refbro}
Harrison, J. M. and Reiman, M. I. 1981. Reflected Brownian motion on
an orthant. {\it Annals of Probability} {\bf 9}(2) 302-308.

\bibitem{hirlem:funcon}
Hiriart-Urruty, J. B. and C.~Lemar\'echal, C. 2001. Fundamentals of
Convex Analysis, Springer-Verlag, Berlin.

\bibitem{iglwhi:equfun}
Iglehart, D. L. and Whitt, W. 1971. The equivalence of functional
central limit theorems for counting processes and associated partial
sums. {\it Ann. Math. Statist.} {\bf 42}(4) 1372-1378.

\bibitem{jacshi:limthe}
Jacod, J. and Shiryaev, A. N. 2003. Limit Theorems for Stochastic
Processes, Second Edition, Springer-Verlag, Berlin.

\bibitem{jairou:pactra}
Jain, R. and Routhier, S. A. (1986). Packet trains: measurements and
a new model for computer network traffic. {\it IEEE Journal on
Selected Areas in Communications} {\bf 4} 986-995.

\bibitem{jinvis:duagau}
Jindal, N., Vishwanath, S., and Goldsmith, A. 2004. On the duality
of Gaussian multiple-access and broadcast channels. {\it IEEE
Transactions on Information Theory} {\bf 50}(5) 768-783.

\bibitem{kal:foumod}
Kallenberg, O. 1997. Foundations of Modern Probability,
Springer-Verlag, Berlin.

\bibitem{kumvis:joipow}
Kumaran, K. and Viswanathan, S. 2005. Joint power and bandwidth
allocation in downlink transmission. {\it EEE Transactions on
Wireless Communications} {\bf 4}(3) 1008-1016.

\bibitem{liuhou:weipro}
Liu, J. and Hou, Y. T. 2008. Weighted proportional fairness capacity
of Gaussian MIMO broadcast channels. {\it Proceedings of IEEE
INFOCOM 2008} 1058-1066, IEEE Computer Society Press.

\bibitem{lue:linnon}
Luenberger, D. G. 1984. Linear and Nonlinear Programming, Second
Edition, Addison-Wesley Publishing Company, Reading, Massachusetts.

\bibitem{mansto:schfle}
Mandelbaum, A. and Stolyar, A. L. 2004. Scheduling flexible servers
with convex delay costs: heavy-traffic optimality of the Generalized
$c\mu$-rule. {\it Operations Research} {\bf 52}(6) 836-855.

\bibitem{nikaky:ovesou}
Nokolaidis, I. and Akyildiz, I. F. (1997). An overview of source
characterization in ATM networks. {\it Modeling and Simulation of
Computer and Communication Networks: Techniques, Tools and
Tutorials} 123-150. Gordon $\&$ Breach Publishing Co.

\bibitem{pro:conran}
Prokhorov, Y. V. 1956. Convergence of random processes and limit
theorems in probability theory. {\it Theory Probab. Appl.} {\bf
1}(2) 157-214.

\bibitem{res:advsto}
Resnick, S. I. 1992. Adventures in Stochastic Processes,
Birkh\"auser, Boston.

\bibitem{ros:exiuni}
Rosen, J. R. 1965. Existence and uniqueness of equilibrium points
for concave $N$-person games. {\it Econometrics} {\bf 33} (3)
520-534.

\bibitem{shasri:patopt}
Shakkotai, S., Srikant, R., and Stolyar, A. L. 2004. Pathwise
optimality of the exponential scheduling rule for wireless channels.
{\it Advances in Appl. Probability} {\bf 36}(4) 1021-1045.

\bibitem{shi:pro}
Shiryaev, A. N. 1996. Probaility, Second Edition, Springer-Verlag,
New York.

\bibitem{sto:maxsch}
Stolyar, A. L. 2004. MaxWeight scheduling in a generalized switch:
state space collapse and workload minimization in heavy traffic.
{\it The Annals of Applied Probability} {\bf 14}(1) 1-53.

\bibitem{taqwil:profun}
Taqqu, M.S., Willinger, W., and Sherman, R. 1997. Proof of a
fundamental result in self-similar traffic modeling. {\it
ACM/Sigcomm Computer Communication Review} {\bf 27} 5-23.

\bibitem{visjin:capmul}
Vishwanath, S., Jindal, N., and Goldsmith, A. 2002. On the capacity
of multiple input multiple output broadcast channels. {\it Proc.
Int. Conf. Communications}, Apr. 2002, 1444-1450.

\bibitem{visjin:duaach}
Vishwanath, S., Jindal, N., and Goldsmith, A., Duality, achievable
rates and sum-rate capacity of Gaussian MIMO broadcast channels,
IEEE Trans. Inform. Theory, vol. 49, pp. 2658-2668, Oct. 2003.

\bibitem{vistse:sumcap}
Viswanath, P. and Tse, D. N. 2002. Sum capacity of the multiple
antenna Gaussian broadcast channel. {\it Proc. Int. Symp.
Information Theory,} June 2002, 497.

\bibitem{viskum:ratsch}
Viswanathan, H. and Kumaran, K. 2005. Rate scheduling in multiple
antenna downlink wireless systems. {\it IEEE Transactions on
Communications} {\bf 53}(4) 645-655.

\bibitem{wanmoa:finsta}
Wang, H. S. and Moayeri, N. 1995. Finite-state Markov channel $-$ a
useful model for radio communication channels. {\it IEEE
Transactions on Vehicular Technology} {\bf 44}(1).

\bibitem{weiste:capreg}
Weingarten, H., Steinberg, Y., and Shamai(Shitz), S. 2004. The
capacity region of the Gaussian MIMO broadcast channel. {\it Proc.
of the IEEE ISIT}, July, 2004.


\bibitem{whi:skopro}
Whitley, A. R. K. 2003. Skorohod problems and semimartingale
reflecting stable processes in an orthant. {\it Ph.D. dissertation},
University of California, San Diego.

\bibitem{whi:stopro}
Whitt, W. 2002. Stochastic-Processes Limits. Springer, New York,
2002.

\bibitem{yeou:stadat}
Ye, H. Q., Ou, J., and Yuan, X. M. 2005. Stability of data networks:
stationary and bursty models. {\it Operations Research} {\bf 53}(1),
107-125.

\bibitem{yeyao:heatra}
Ye, H. and Yao, D. D. 2008. Heavy traffic optimality of a stochastic
network under utility-maximizing resource control. {\it Operations
Research} {\bf 56}(2) 453-470.

\bibitem{yucio:sumcap}
Yu, W. and Cioffi, J. M. 2002. Sum capacity of a Gaussian vector
broadcast channel. {\it Proc. Int. Symp. Information Theory}, June
2002, p. 498.

\bibitem{yurhe:itewat}
Yu, W., Rhee, W., Boyd, S., and Cioffi, J. 2004. Iterative
water-filling for vector multiple access channels. {\it IEEE
Transactions on Information Thoery} {\bf 50}(1) 145-151.

\bibitem{zhoyin:marmea}
Zhou, X. Y. and Yin, G. 2003. Markowitz's mean-variance portfolio
selection with regime switching: a continuous-time model. {\it SIAM
Journal of Control and Optimization} {\bf 42} (4) 1466-1482.

\end{thebibliography}



\bibliographystyle{nonumber}

\end{document}